\pgfplotsset{compat=1.6}
\newtheorem{assumption}{Assumption}[section]
  \newtheorem{remark}{Remark}
\newcommand{\indep}{\raisebox{0.05em}{\rotatebox[origin=c]{90}{$\models$}}}
\newcommand\scalemath[2]{\scalebox{#1}{\mbox{\ensuremath{\displaystyle #2}}}}
\pgfplotsset{soldot/.style={color=blue,only marks,mark=*}}
\pgfplotsset{holdot/.style={color=blue,fill=white,only marks,mark=*}}
\begin{document}
\sloppy
\title{Optimal Policies for Convex Symmetric Stochastic Dynamic Teams and their Mean-field Limit
\thanks{This research was supported by the Natural Sciences and Engineering Research Council (NSERC) of Canada. A summary of some of the results here is included in \cite{sinacdc2019convex} presented at the 2019 IEEE Conference
on Decision and Control (CDC). The authors are with the Department of Mathematics and Statistics,
     Queen's University, Kingston, ON, Canada.}
     Email: \{16sss3,yuksel@queensu.ca\}}
\author{Sina Sanjari and Serdar Y\"uksel}
\maketitle

\begin{abstract}{
This paper studies convex stochastic dynamic team problems with finite and infinite time horizons under decentralized information structures. First, we introduce two notions called  exchangeable teams and symmetric information structures. We show that in convex exchangeable team problems an optimal policy exhibits a symmetry structure. We give a characterization for such symmetrically optimal teams for a general class of convex dynamic team problems under a mild conditional independence condition. In addition, through concentration of measure arguments, we establish the convergence of optimal policies for teams with $N$ decision makers to the corresponding optimal policies for symmetric mean-field teams with infinitely many decision makers.  As a by-product, we present an existence result for convex mean-field teams, where the main contribution of our paper is with respect to the information structure in the system when compared with the related results in the literature that have either assumed a classical information structure or a static information structure. We also apply these results to the important special case of Linear Quadratic Gaussian (LQG) team problems, where while for partially nested LQG team problems with finite time horizons it is known that the optimal policies are linear, for infinite horizon problems the linearity of optimal policies has not been established in full generality. We also study average cost finite and infinite horizon dynamic team problems with a symmetric partially nested information structure and obtain globally optimal solutions where we establish linearity of optimal policies.}
\end{abstract}

\begin{keywords}
Stochastic teams, average cost optimization, decentralized control, mean-field teams
\end{keywords}


\section{Introduction and literature review}
\label{sec:intro}

Team problems consist of a collection of decision makers or agents acting together to optimize a common cost function, but not necessarily sharing all the available information. The term \textit{stochastic teams} refers to the class of team problems where there exist randomness in the initial states, observations, cost realizations, or the evolution of the dynamics. At each time stage, each agent only has partial access to the global information which is defined by the \textit{information structure} (IS) of the problem  \cite{wit75}. If there is a pre-defined order in which the decision makers act then the team is called a \textit{sequential team}. For sequential teams, if each agent's information depends only on primitive random variables, the team is \textit{static}. If at least one agent's information is affected by an action of another agent, the team is said to be \textit{dynamic}. Information structures can be further categorized as \textit{classical}, \textit{partially nested}, and \textit{non-classical}. An IS is \textit{classical} if the information of decision maker $i$ (DM$^i$) contains all of the information available to DM$^k$ for $k < i$. An IS is \textit{partially nested}, if whenever the action of DM$^k$, for some $k < i$, affects the information of DM$^i$, then the information of DM$^i$ contains the information of DM$^k$. An IS which is not partially nested is \textit{non-classical}. A detailed review is presented in \cite{YukselBasarBook}.

{ 
Obtaining structural results in team problems is important towards establishing both existence and computational/approximation methods for optimal policies. In this paper, we define the notion of exchangeable teams and symmetric information structures, and we show that, for convex exchangeable dynamic teams with finite horizons, optimal policies exhibit a symmetry structure (Theorem \ref{the:3.1}). For any number of DMs, this symmetry structure is more relaxed when compared with the symmetry results developed earlier, e.g. in \cite{sanjari2018optimal,sinacdc2018optimal} which focused on problems under a static information structure, and is applicable for dynamic teams which may not admit a static reduction, as long as convexity in policies holds for the team problem.

There have been many studies involving decentralized stochastic control with infinitely many decision makers. In particular, when the coupling among the decision makers is only through some aggregate/average effect, such problems can be viewed within the umbrella of mean-field games \cite{LyonsMeanField,CainesMeanField1}, which were introduced as a limit model for non-cooperative symmetric $N$-player differential games with a mean-field interaction as $N\to \infty$. The solution concept in game theory is often Nash equilibrium, and often under various characterizations of it in dynamic Bayesian setups. In the context of decentralized stochastic control or teams, these would correspond to person-by-person optimal solutions, and hence not necessarily globally optimal solutions.

Nonetheless, on the existence as well as uniqueness and non-uniqueness results on equilibria, there have been several studies for mean-field games \cite{LyonsMeanField, bardi2017non, carmona2016mean, light2018mean, lacker2015mean, bayraktar2020non, hajek2019non}. There have also been several studies for mean-field games where the limits of sequences of Nash equilibria have been investigated as the number of decision makers $N \to \infty$ (see e.g., \cite{fischer2017connection, lacker2018convergence, bardi2014linear, LyonsMeanField, arapostathis2017solutions}). We refer interested readers to \cite{carmona2018probabilistic, caines2018peter} for a literature review and a detailed summary of some recent results on mean-field games.

Some notable relevant studies from the mean-field literature are the following: In \cite{fischer2017connection}, through a concentration of measures argument, it has been shown that sequences of $\epsilon_{N}$- local (for each player) Nash equilibria for $N$ player games converge to a solution for the mean-field game under exchangeability of the initial states and weak convergence of normalized occupational measures to a deterministic measure \cite[Theorem 5.1]{fischer2017connection}. In \cite{lacker2016general}, assumptions on equilibrium policies of the large population mean-field symmetric stochastic differential games have been presented to allow the convergence of asymmetric approximate Nash equilibria to a weak solution of the mean-field game \cite[Theorem 2.6]{lacker2016general}. 

However, in these studies the information structures are restricted to the following models: In \cite{fischer2017connection} the information structure is assumed to be static since strategies of each player are assumed to be adapted to the filtration generated by his/her initial states and Wiener process (also called \textit{distributed open-loop} controllers in the mean-field games' literature \cite{lacker2016general,carmona2016mean, carmona2018probabilistic}) (see Remark \ref{rem:8} for details of this discussion). Convergence of Nash equilibria induced by closed-loop controllers to a weak semi-Markov mean-field equilibrium has been established in \cite{lacker2018convergence} for finite horizon mean-field game problems, where the classical information structure (i.e., what would be a centralized problem in the team theoretic setup) has been considered. For infinite horizon problems, in \cite{cardaliaguet2019example}, an example of ergodic differential games with mean-field coupling has been constructed such that limits of sequences of expected costs induced by symmetric Nash-equilibrium policies of $N$-player games capture expected costs induced by many more Nash-equilibrium policies including a mean-field equilibrium and social optima. In \cite{lacker2018convergence}, the classical information structure (a centralized problem) has been considered, where in \cite{cardaliaguet2019example} it has been assumed that players have access to all the history of states of all players but not controls (we note that in the team problem setup through using a classical result of Blackwell \cite{Blackwell2} in the case where each DM knows all the history of states of all DMs, optimal policies can be realized as one in the centralized problem where just the global state is a sufficient statistic). Moreover, under relaxed regularity conditions on dynamics and the cost function, a limit theory has been established for controlled McKean-Vlasov dynamics \cite{lacker2017limit} under the classical information structure, where through a similar analysis as in \cite{fischer2017connection, lacker2016general}, it has been shown that the empirical measure of pairs of states and $\epsilon_{N}$-open-loop optimal controls converges weakly as $N \to \infty$ to limit points in the set of pairs of states and optimal controls of the McKean-Vlasov problem. 

The above highlights the intricacies due to the information structure aspects: different from the aforementioned studies above, we consider information structures that are not necessarily static or classical. Also, in this paper, we work with global optimality and not only mean-field equilibria and we show the existence of a globally optimal policy for mean-field team problems. On the other hand, in our paper since we work under the convexity assumption, the information structure does not allow for the mean-field coupling in the dynamics. We also note that in prior work, \cite{sanjari2018optimal}, we studied static teams where under convexity and more restrictive symmetry conditions, global optimality of a limit policy of a sequence of $N$-DM optimal policies has been established. 

In the context of stochastic teams with countably infinite number of decision makers, the gap between  person-by-person optimality (Nash equilibrium in the game-theoretic context) and global team optimality is significant since a perturbation of finitely many policies fails to deviate the value of the expected cost, thus  person-by-person optimality is a weak condition for such a setup, and hence the results presented in the aforementioned papers may be inconclusive regarding global optimality of the limit equilibrium.  
For teams and social optima control problems, the analysis has primarily focused on the LQG model or Markov chains where the centralized performance has been shown to be achieved asymptotically by decentralized controllers (see e.g., \cite{huang2012social, arabneydi2017certainty, arabneydi2015team}).

We also obtain existence results on optimal policies for the setups considered. Compared to the results on the existence of a globally optimal policy in team problems where (finite) $N$-DM team problems have been considered \cite{yuksel2018general, gupta2014existence, YukselSaldiSICON17, saldi2019topology}, we study convex team problems with countably infinite number of decision makers. 
}

Parts of our results in this paper correspond to LQG teams. In \cite{HoChu}, it has been shown that for teams with finite number of DMs, dynamic teams with a partially nested information structure can be reduced to a static one (\cite{HoChu, wit88}) where Radner's theorem concludes global optimality of linear policies for LQG team problems \cite{Radner}. However, for average cost infinite horizon, partially nested, LQG dynamic team problems so far there has been no universal result establishing that a globally optimal policy is linear, time-invariant, and stabilizing, and this has been often imposed apriori: In \cite{Rotkowitz}, the problem of designing a linear, time-invariant, stabilizing, state feedback optimal policy for decentralized $\mathcal{H}_{2}$-optimization problems, which satisfy the quadratically invariance property, has been addressed by reparametrizing the problem as a convex problem (via Youla parameterization). In \cite{rotkowitz2008information}, it has been shown that for sequential team problems involving linear systems, quadratic invariance and the partially nested property are equivalent. For a class of partially ordered (POSET) systems, state space techniques have been utilized to obtain optimal, linear, time-invariant, state feedback controllers for $\mathcal{H}_{2}$-optimization problems with sparsity constraints \cite{shah2013cal}. A similar result has been established in \cite{swigart2014optimal} where linearity and time invariance have been imposed apriori.  In \cite{lessard2015optimal}, $\mathcal{H}_{2}$-optimization output feedback problems with two-players have been considered and optimality results have been established when the optimal policies are restricted to linear, time invariant, stabilizing policies. However, {the results in \cite{lessard2015optimal, Rotkowitz, shah2013cal, swigart2014optimal} are inconclusive regarding global optimality. Our contribution here is to consider average cost infinite horizon dynamic team problems without restricting the set of policies to those that are linear, time-invariant, and stabilizing unlike the results in \cite{lessard2015optimal, Rotkowitz, shah2013cal, swigart2014optimal}. We note again that the optimality of linear policies for infinite horizon LQG problems is an open problem in its generality and we provide positive results for a class of such problems.} 

{\bf Contributions.} 
In view of the discussion above, our paper makes the following contributions.
\begin{itemize}
\item[(i)] We define a notion of exchangeable teams and symmetric information structures, and we show that, for convex exchangeable dynamic teams with finite horizons, optimal policies exhibit a symmetry structure (Theorem \ref{the:3.1}). For any number of DMs, this symmetry structure is more relaxed when compared with the symmetry results developed in \cite{sanjari2018optimal, sinacdc2018optimal} and is applicable for dynamic teams which may not admit a static reduction, as long as convexity in policies holds for the team problem.
\item[(ii)] For convex mean-field teams with a symmetric information structure, through concentration of measure arguments, we establish the convergence of optimal policies for mean-field teams with $N$ decision makers to the corresponding optimal policies for mean-field teams (see Theorem \ref{the:mft}).
\item[(iii)] We establish an existence result for the class of convex mean-field teams with a symmetric information structure (see Theorem \ref{the:exmft}) for finite horizon problems, where, as noted in the literature review, related results assumed more restrictive information structures which are either static or classical.
\item [(iv)] We also apply our results to LQG dynamic teams for finite horizon problems (see Section \ref{sec:LQG}). For LQG dynamic teams with a symmetric partially information pattern, we obtain an optimal policy for finite horizon problems (see Section \ref{sec:3a}). We also apply convex mean-field results to LQG mean-field teams with a symmetric partially nested information structure (see Section \ref{sec:3a}) and obtain a globally optimal policy. Building on the result above, we also obtain a globally optimal policy for average cost LQG team problems. 

\end{itemize}

The organization of the paper is as follows: we study convex exchangeable dynamic teams with finite horizons in Section \ref{sec:4}, and we study mean-field teams in Section \ref{sec:mft}. We obtain globally optimal solutions for finite horizon problems with a symmetric partially nested information structure and LQG mean-field  teams in Section \ref{sec:3a}, and we discuss average cost LQG team problems with a symmetric information structure in Section \ref{sec:ave}, respectively.

{\bf Notation.} $\mathbb{R}$ and $\mathbb{N}$ denote the set of real numbers and natural numbers, respectively. We denote trace of a matrix $A$ as $Tr(A)$. We denote that a random vector $X$ is independent of a random vector $Y$ by $X \indep Y$. We denote $A^{T}$ as the transpose of a matrix $A$ and $A^{(T)}$ to show the dependence of a matrix $A$ to $T \in \mathbb{N}$. For any random variables $z^{1:N}:=(z^{1},\dots,z^{N})$, we defined ${z}^{-i}:=({z}^{1},\dots,{z}^{i-1},{z}^{i+1},\dots, {z}^{N})$, and $\mathcal{M}_{r,q}$ denotes the space of $r \times q$ matrices. 

\subsection{Preliminaries}\label{sec:pre}

In this section, we introduce Witsenhausen's \textit{Intrinsic Model} for sequential teams \cite{wit75} (we generalize this definition to infinite number of decision makers).
Consider \textit{sequential systems} and assume the action and measurement spaces are standard Borel spaces, that is, Borel subsets of complete, separable and metric spaces. The \textit{Intrinsic Model} for sequential teams is defined as follows.
\begin{itemize}
\item There exists a collection of \textit{measurable spaces} $\{(\Omega, {\cal F}), \allowbreak(\mathbb{U}^i,{\cal U}^i), (\mathbb{Y}^i,{\cal Y}^i), i \in {\mathcal{N}}\}$, specifying the system's distinguishable events, and control and measurement spaces. The set $\mathcal{N}$ denotes the collection of decision makers. The set $\mathcal{N}$ can be a finite set $\{1,2,\dots, N\}$ or a countable set $\mathbb{N}$. The pair $(\Omega, {\cal F})$ is a
measurable space (on which an underlying probability may be defined). The pair $(\mathbb{U}^i, {\cal U}^i)$
denotes the Borel space from which the action $u^i$ of DM$^i$ is selected. The pair $(\mathbb{Y}^i,{\cal Y}^i)$ denotes the Borel observation/measurement space.

\item There is a \textit{measurement constraint} to establish the connection between the observation variables and the system's distinguishable events. The $\mathbb{Y}^i$-valued observation variables are given by $y^i=h^i(\omega,{\underline u}^{1:i-1})$, where ${\underline u}^{1:i-1}=\{u^k, k \leq i-1\}$ and $h^i$s are measurable functions.
\item The set of admissible control laws $\underline{\gamma}= \{\gamma^i\}_{i \in \mathcal{N}}$, also called
{\textit{designs}} or {\textit{policies}}, are measurable control functions, so that $u^i = \gamma^i(y^i)$. Let $\Gamma^i$ denote the set of all admissible policies for DM$^i$ and let ${\Gamma} = \prod_{i \in \mathcal{N}} \Gamma^i$.
\item There is a {\textit{probability measure}} ${P}$ on $(\Omega, {\cal F})$ describing the probability space on which the system is defined.
\end{itemize}

Under the intrinsic model, every DM acts separately. However, depending on the information structure, it may be convenient to consider a collection of DMs as a single DM acting at different time instances. In fact, in the classical stochastic control, this is the standard approach.  

\section{Finite horizon convex dynamic team problems with a symmetric information structure}\label{sec:4}
In this section, we characterize symmetry in dynamic team problems. According to the discussion above, by considering a collection of DMs as a single DM ($i=1,\dots,N$) acting at different time instances ($t=0,\dots,T-1$), we define a team problem with $(NT)$-DMs as a team with $N$-DMs:
\begin{itemize}[wide]
\item  [{(i)}] Let the observation and action spaces be Borel subsets of $\mathbb{R}^{n}$ for a positive integer $n$ and be identical for each DM ($i=1,\dots,N$) with $\mathbb{Y}_{i}:={\bf{Y}}=\prod_{t=0}^{T-1}\mathbb{Y}^{t}$, $\mathbb{U}_{i}:={\bf{U}}=\prod_{t=0}^{T-1}\mathbb{U}^{t}$, respectively. The sets of all admissible policies are denoted by ${\bf{\Gamma}} = \prod_{i=1}^{N}\Gamma_{i}=\prod_{i=1}^{N}\prod_{t=0}^{T-1} \Gamma^{t}$.
\item [(ii)] For $i=1,\dots,N$, $y^{i}_{t}:=h_{t}^{i}(x_{0}^{1:N}, \zeta^{1:N}_{0:t}, u_{0:t-1}^{1:N})$ represents the observation of DM$^i$ at time $t$ ($h_{t}^{i}$s are Borel measurable functions).
\item [{(iii)}] Let $(\underline{\zeta}^{1:N}):=(\underline{\zeta}^{1},\dots,\underline{\zeta}^{N})$ where $\underline\zeta^{i}:=(x_{0}^{i}, \zeta_{0:T-1}^{i})$ denotes all the uncertainty associated with DM$^i$ including his/her initial states. We assume that $(\underline{\zeta}^{i})$ takes values in $\Omega_{\zeta}$ (where at each time instances $t$, it takes value in $\Omega_{\zeta_{t}}$). Let $\mu$ denote the law of $\underline{\zeta}^{1:N}$.
\item [{(iv)}] Define the expected cost function of $\underline{\gamma}^{1:N}$ as 
$J_{N}(\underline{\gamma}^{1:N})= E^{\underline{\gamma}^{1:N}}[c(\underline{\zeta}^{1:N},\underline{u}^{1:N})]$,
for some Borel measurable cost function $c: \prod_{i=1}^{N} ({\Omega_{\zeta}}\times{\bf{U}}) \to \mathbb{R}_{+}$, where $\underline{\gamma}^{1:N}=(\underline{\gamma}^{1},\underline{\gamma}^{2},\dots,\underline{\gamma}^{N})$ and $\underline{\gamma}^{i}=\gamma^{i}_{0:T-1}$ for $i=1,\dots,N$.
\end{itemize}

Now, we present the definition of symmetric information structures (note that symmetric information structures can be classical, partially nested, or non-classical).
\begin{definition}\label{def:s}
Let the information of DM$^{i}$ acting at time $t$ be described as $I_{t}^{i}:=\{y_{t}^{i}\}$. The information structure of a sequential $N$-DM team problem is \textit{symmetric} if  
\begin{itemize}[wide=0pt]
\item[(i)] $y^{i}_{t}=h_{t}(x_{0}^{i},x_{0}^{-i}, \zeta^{i}_{0:t}, {\zeta}^{-i}_{0:t}, u_{0:t-1}^{i}, u_{0:t-1}^{-i})$ where $h_{t}$ is identical for all $i=1,\dots,N$ (note that function's arguments depend on $i$).
\end{itemize}
\end{definition}

We note that the above definition can be generalized to be applicable for teams with countably infinite DMs and infinite horizon problems.

The symmetric information structure can also be interpreted and defined as a graph, which has often been the common method to describe information structures in control theory, relating DMs and their information through directed edges. Consider $G(V,\mu)$ as a directed graph with $V=\{1,\dots,NT\}$ nodes and where $\mu \subset V \times V$ determines the directed edges between nodes; this represents the dependency notation in the information of nodes, i.e., $(i,j)$ denotes a directed edge from $i$ to $j$, $i \to j$, it means $u^{i}$ affects $y^{j}$ through the relation $y^i=h^i(\omega,{\underline u}^{1:i-1})$ defined in the intrinsic model (see Section \ref{sec:pre}). We denote by $\downarrow j$ as the set of nodes $i$ such that $i \to j$ (ancestors), and $\downarrow\downarrow j=\{\downarrow j\}\cup\{j\}$. Similarly, we can define descendants by $\uparrow j$. We can define a collection of DMs as a single DM ($i=1,\dots,N$) acting at different time instances ($t=0,\dots,T-1$) on a graph with a symmetric information structure (two examples are shown in Fig. 2.1, and Fig. 2.2). Assume
\begin{itemize}[wide = 0pt]
\item [{(i)}] there exists a node $\{i\}$ (root node), $\omega_{0}$. Each sub-graph represents a single DM acting at time instances $t=0,\dots,T-1$, and there exists a finite number of sub-graphs $G_{p}(\hat{V},\hat{\mu})$ such that $\cup_{p=1}^{N}G_{p}\cup \{i\}=G$, where $G_{p}$s are isomorphic (see e.g., \cite{west1996introduction}) for all $p=1,\dots,N$, i.e., for every node with directed edges in each sub-graph there exists a unique node with identical directed edges in the corresponding sub-graphs, where $\hat{V}=\{0,\dots,T-1\}$, and $G_{p}^{k}$  refers to a node $k$ in $G_{p}$ for all $p=1,\dots,N$ and $k=0,\dots,T-1$, 
\item [{(ii)}] sharing of the information is symmetric across sub-graphs, i.e., for $p,s=1,\dots,N$, and $k,j=0,\dots,T-1$, and for every edge from a node $G_{p}^{k}$ to a node $G_{s}^{j}$, there exists an edge from a node $G_{p}^{k}$ to nodes $G_{-p}^{j}$ ,  where $G_{-p}^{j}$ denotes $(G_{1}^{j},\dots,G_{p-1}^{j},G_{p+1}^{j},\dots,G_{N}^{j})$, and also there exist edges from nodes $G_{-p}^{k}$ to a node $G_{p}^{j}$. 
\end{itemize}

\begin{figure}[b!]
\begin{centering}
\tikzstyle{place}=[circle,draw=blue!50,fill=blue!20,thick,
inner sep=0pt,minimum size=6mm]
\begin{tikzpicture}
\node at ( 0,1.5) [place] (2){${y}^{1}_{0}$};
\node at ( 0,0.5) [place] (3){$y^{2}_{0}$};
\node at ( 1,1.5) [place] (4){$y^{1}_{1}$};
\node at ( 1,0.5) [place] (5){$y^{2}_{1}$};
\node at (-1,1) [place] (1){$\omega_{0}$};
\draw [->] (1) to (2);
\draw [->] (1) to (3);
\draw [->] (2) to (4);
\draw [->] (3) to (5);
\node at (1.5,1.5) {...};
\node at (1.5,0.5) {...};
\end{tikzpicture}
\caption{A tree structure of a symmetric dynamic team.}
\end{centering}
\end{figure}
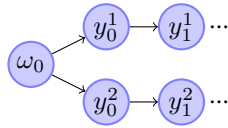
\begin{figure}
\begin{centering}
\tikzstyle{place}=[circle,draw=blue!50,fill=blue!20,thick,
inner sep=0pt,minimum size=6mm]
\begin{tikzpicture}
\node at ( 0,2.75) [place] (2){${y}^{1}_{0}$};
\node at ( 0,0.25) [place] (3){${y}^{2}_{0}$};
\node at ( 1,3.5) [place] (4){${y}^{1}_{1}$};
\node at ( 1,2) [place] (5){${y}^{1}_{2}$};
\node at (-1,1.5) [place] (1){$\omega_{0}$};
\node at ( 2,2.75) [place] (6){${y}^{1}_{3}$};
\node at ( 2,0.25) [place] (9){${y}^{2}_{3}$};
\node at ( 1,1) [place] (8){${y}^{2}_{1}$};
\node at ( 1,-0.5) [place] (7){${y}^{2}_{2}$};
\node at ( 3,0.25) [place] (11){${y}^{2}_{4}$};
\node at ( 3,2.75) [place] (10){${y}^{1}_{4}$};
\draw [->] (1) to (2);
\draw [->] (2) to (4);
\draw [->] (2) to (5);
\draw [->] (5) to (6);
\draw [->] (4) to (6);
\draw [->] (1) to (3);
\draw [->] (3) to (7);
\draw [->] (3) to (8);
\draw [->] (8) to (9);
\draw [->] (7) to (9);
\draw [->] (9) to (11);
\draw [->] (6) to (10);
\node at (3.5,2.75) {...};
\node at (3.5,0.25) {...};
\end{tikzpicture}
\caption{An example of the graph structure of a symmetric dynamic team.}
\end{centering}
\end{figure}
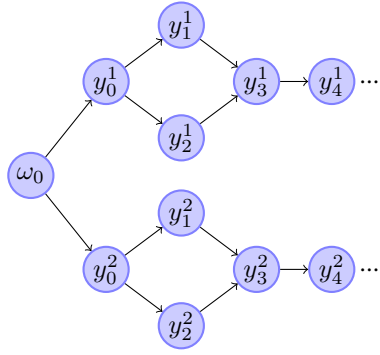

Now, we present an exchangeability hypothesis on the cost function. First, we recall the definition of an {\it exchangeable} finite set of random variables.
{\begin{definition}
Random variables $(x^{1},x^{2},\dots,x^{N})$ defined on a common probability space are \it{exchangeable} if for any permutation $\sigma$ of the set $\{1,\dots,N\}$ (a mapping $\sigma : \{1,\ldots,N\} \to \{1,\ldots,N\}$), 
\begin{flalign*}
&\scalemath{1}{{P}\bigg((x^{\sigma})^{1} \in A^{1},(x^{\sigma})^{2} \in A^{2},\dots,(x^{\sigma})^{N}\in A^{N}\bigg)={P}\bigg(x^{1} \in A^{1},x^{2} \in A^{2},\dots,x^{N}\in A^{N}\bigg)}
\end{flalign*}
for any measurable $\{A^{1},\dots, A^N\}$ and $(x^{\sigma})^{i}:=x^{\sigma(i)}$ for all $i\in \{1,\dots,N\}$.
\end{definition}}
{\begin{assumption}\label{assump:3.1}
For any permutation $\sigma$ of the set $\{1,\dots,N\}$, we have for all $\omega_{0}$
 \begin{flalign}\label{eq:3.3}
c((\underline{\zeta}^{\sigma})^{1:N},(\underline{u}^{\sigma})^{1:N})=c(\omega_{0}, \underline{\zeta}^{1:N},\underline{u}^{1:N}),
 \end{flalign}
 where $(\underline{\zeta}^{\sigma})^{1:N}=(\underline{\zeta}^{\sigma(1)},\dots,\underline{\zeta}^{\sigma(N)})$ and $(\underline{u}^{\sigma})^{1:N}=(\underline{u}^{\sigma(1)},\dots,\underline{u}^{\sigma(N)})$.
\end{assumption}}

Here, we recall some definitions and results from \cite[Section 3.3]{YukselSaldiSICON17} on convexity of static and dynamic team problems required to follow the result in this paper.
\begin{definition}\cite[Section 3.3]{YukselSaldiSICON17}
An $N$-DM team problem (static or dynamic) is convex in policies if for any two team policies ${\underline{\gamma}}_{T}^{1:N}$ and $\tilde{\underline{\gamma}}_{T}^{1:N}$ in the set $\{ \underline{\gamma}_{T}^{1:N} \in {\bf \Gamma}: J(\underline{\gamma}_{T}^{1:N})<\infty\},$ and for any $\alpha \in (0,1)$, we have
 \begin{equation*}
J_{T}(\alpha \underline{\gamma}_{T}^{1:N}+(1-\alpha)\tilde{\underline{\gamma}}_{T}^{1:N})\leq \alpha J_{T}(\underline{\gamma}_{T}^{1:N})+(1-\alpha)J_{T}(\tilde{\underline{\gamma}}_{T}^{1:N}).
\end{equation*}
\end{definition}

The above definition can also be applied to infinite-horizon and/or teams with countably infinite number of DMs. We recall sufficient conditions for convexity of static and dynamic team problems following \cite[Section 3.3]{YukselSaldiSICON17}.
\begin{theorem}\label{the:cp}\cite[Section 3.3]{YukselSaldiSICON17}
Consider a sequential team problems, and assume action spaces are convex, and $J(\underline{\gamma})<\infty$ for all $\underline{\gamma} \in {\bf \Gamma}$ (or alternatively, restrict the set to those leading to the finite cost). Then 
\begin{itemize}[wide]
\item[(i)] for static team problems convexity of the cost function in actions is sufficient for convexity of the team problem in policies,
\item [(ii)] for dynamic team problems with a static reduction, convexity of the team problem in policies is equivalent to the convexity of its static reduction. 
\item [(iii)] in particular, for partially nested dynamic teams with a static reduction (more generally, for stochastically partially nested team problems \cite[Section 3.3]{YukselSaldiSICON17}) if the cost function is convex in actions then for the reduced team problem with an equivalent information structure is convex on ${\bf \Gamma}$.
\end{itemize}
\end{theorem}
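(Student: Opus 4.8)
The plan is to handle the three parts in order, with \textbf{(i)} carrying the essential probabilistic content and \textbf{(ii)}--\textbf{(iii)} following as structural consequences of the static reduction.

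For \textbf{(i)}, the decisive feature of a static information structure is that each observation is a function of the primitive uncertainty $\omega=\underline{\zeta}^{1:N}$ alone, $y^{i}=h^{i}(\omega)$, with no dependence on any action. First I would fix two admissible policies $\underline{\gamma}_{1,T}^{1:N}$ and $\underline{\gamma}_{2,T}^{1:N}$ of finite cost and $\alpha\in(0,1)$, and set $\underline{\gamma}_{\alpha,T}^{1:N}:=\alpha\,\underline{\gamma}_{1,T}^{1:N}+(1-\alpha)\,\underline{\gamma}_{2,T}^{1:N}$, which is admissible since the action spaces are convex and the pointwise convex combination of measurable maps is measurable. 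Because the observations are frozen along each sample path (they do not see the actions), evaluating the combined policy yields, pathwise, the action $u_{1}^{i}=\gamma_{1}^{i}(y^{i})$ under the first policy and $u_{2}^{i}=\gamma_{2}^{i}(y^{i})$ under the second, and hence
\[
u_{\alpha}^{i}=\alpha\,\gamma_{1}^{i}(y^{i})+(1-\alpha)\,\gamma_{2}^{i}(y^{i})=\alpha\,u_{1}^{i}+(1-\alpha)\,u_{2}^{i},
\]
that is, a genuine convex combination of the actions induced by the two policies \emph{at the same} observation $y^{i}$. The $\mathbb{P}$-almost sure convexity of $c(\underline{\zeta}^{1:N},\cdot)$ then gives, almost surely,
\[
c\big(\underline{\zeta}^{1:N},\alpha\,\underline{u}_{1}^{1:N}+(1-\alpha)\,\underline{u}_{2}^{1:N}\big)\le \alpha\,c(\underline{\zeta}^{1:N},\underline{u}_{1}^{1:N})+(1-\alpha)\,c(\underline{\zeta}^{1:N},\underline{u}_{2}^{1:N}),
\]
and taking expectations, by linearity and monotonicity of the integral, delivers $J_{T}(\underline{\gamma}_{\alpha,T}^{1:N})\le \alpha\,J_{T}(\underline{\gamma}_{1,T}^{1:N})+(1-\alpha)\,J_{T}(\underline{\gamma}_{2,T}^{1:N})$, the claimed convexity in policies.

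For \textbf{(ii)}, I would invoke the static reduction in the sense of Witsenhausen: a dynamic team admitting such a reduction is replaced by an equivalent static team in which the observation variables are re-expressed as functions of the primitive uncertainty only (absorbing their dependence on prior actions), the observation spaces $\mathbb{Y}^{i}$---and hence the policy set ${\bf \Gamma}$---are left unchanged, and the cost is replaced by a reduced cost obtained through the appropriate change of measure / Radon--Nikodym weighting. The one fact I need is that this reduction preserves the cost functional, i.e. $J_{T}(\underline{\gamma}_{T}^{1:N})$ is the same whether computed in the dynamic formulation or in its static reduction, for every admissible $\underline{\gamma}_{T}^{1:N}$. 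Granting this, convexity of $J_{T}$ as a map on the common policy set ${\bf \Gamma}$ is literally the same property in both formulations; hence the dynamic team is convex in policies if and only if its static reduction is, which is the asserted equivalence.

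Part \textbf{(iii)} is then a corollary: for a partially nested---more generally, stochastically partially nested---dynamic team the static reduction is available, so if its statically reduced cost is convex in the actions, applying \textbf{(i)} to the static reduction yields convexity of the reduction in policies, and \textbf{(ii)} transfers this to the original dynamic team on ${\bf \Gamma}$. I expect the only real obstacle to sit in \textbf{(ii)}, namely in justifying the static reduction and verifying cost-preservation: this rests on the absolute-continuity conditions under which the change of measure is legitimate and on tracking how the Radon--Nikodym derivative enters the reduced cost. Once cost-preservation is secured the convexity equivalence is immediate, and parts \textbf{(i)} and \textbf{(iii)} are elementary.
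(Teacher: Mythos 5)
Your proposal is correct. Note that the paper does not actually prove this theorem---it is recalled verbatim from the cited reference \cite[Section 3.3]{YukselSaldiSICON17}---and your argument is precisely the standard one underlying that result: for (i), the policy-independence of the observations in a static team makes a convex combination of policies induce a pathwise convex combination of actions at the same observation, so almost-sure convexity of $c$ in $\underline{u}^{1:N}$ passes through the expectation; for (ii), a static reduction by definition preserves the policy set ${\bf \Gamma}$ and the value $J_{T}(\underline{\gamma})$ for every policy, so convexity of $J_{T}$ on ${\bf \Gamma}$ is literally the same property in both formulations; and (iii) is the composition of (i) applied to the reduction (whose reduced cost absorbs the Radon--Nikodym factor) with the equivalence in (ii).
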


The conditions above, however, are only sufficient conditions \cite[Example 1]{YukselSaldiSICON17}. We note however that as a Corollary for (ii) above, for the LQG setup, under partial nestedness, convexity in policies hold as a consequence of Radner’s theorem; we will study this case in Section \ref{sec:LQG}. On the other hand, not all LQG problems are convex: the celebrated counterexample of Witsenhausen \cite{wit68} demonstrates that under non-classical information structures, even LQG problems may not be convex and optimal policies may not be linear.
\subsection{Optimality of symmetric policies for convex dynamic teams with a symmetric information structure}
In the following, we define notions of exchangeable and symmetrically optimal teams analogous to \cite{sanjari2018optimal, sinacdc2018optimal} for dynamic teams. 
\begin{definition}(Exchangeable teams)\label{def:sym}\\
An $N$-DM team is \textit{exchangeable} if the value of the expected cost function is invariant under every permutation of policies of DMs, i.e., $J_{T}(\underline{\gamma}_{T}^{1},\underline{\gamma}_{T}^{2},\dots,\underline{\gamma}_{T}^{N})=J_{T}((\underline{\gamma}_{T}^{\sigma})^{1},\dots, (\underline{\gamma}_{T}^{\sigma})^{N})$.
\end{definition}

\begin{definition}(Symmetrically optimal teams)\label{Def:ost}\\
A team is \textit{symmetrically optimal}, if for every given policy $\underline{\gamma}_{T}=(\underline{\gamma}_{T}^{1},\dots,\underline{\gamma}_{T}^{N})$, there exists an identically symmetric policy (i.e., each DM has the same policy, $\underline{\tilde{\gamma}}_{T}=(\underline{\tilde{\gamma}}_{T}^{1},\dots,\underline{\tilde{\gamma}}_{T}^{N})$ , and $\underline{\tilde{\gamma}}_{T}^{i}=\underline{\tilde{\gamma}}_{T}^{j}$ for all $i,j=1,\dots,N$) which performs at least as good as the given policy.
\end{definition}
\begin{remark}\label{rem:exchang}
The concepts of exchangeable and symmetrically optimal dynamic teams in this paper are generalizations of those for static teams in \cite{sanjari2018optimal, sinacdc2018optimal}. However, here, the value of the expected cost function may not be invariant under exchanging $\gamma^{i}_{t}$ with $\gamma^{j}_{k}$ for $k\not =t$, $k,t=0,\dots,T-1$, and for $i,j=1,\dots,N$.
\end{remark}

Here, we give a characterization for exchangeable and symmetrically optimal dynamic teams. 
\begin{theorem}\label{the:3.1}
Consider dynamic team problems with a symmetric information structure under Assumption \ref{assump:3.1}. If
\begin{itemize}
\item [{(a)}] action spaces $\mathbb{U}_{t}$ are convex for all $t=0, \dots, T-1$, 
\item [{(b)}] $(\underline\zeta^{1}, \dots,\underline\zeta^{N})$ are exchangeable, 
\item  [{(c)}] for all policies $\gamma \in \bf{\Gamma}$, and for all $A=A^{1}\times \dots \times A^{N}$ where $A^{i} \in \mathcal{Y}^{i}$, 
\end{itemize}
\begin{flalign}
\scalemath{1}{\prod_{t=0}^{T-1}{P}\bigg({y}^{1:N}_{t}}&\scalemath{1}{\in A\bigg|x_{0}^{1:N},{\zeta}^{1:N}_{0:t-1},{y}_{0:t-1}^{1:N},{\gamma}^{1}_{0}({y}^{1}_{0}),\dots, {\gamma}^{1}_{t-1}({y}^{1}_{t-1}), {\gamma}^{1}_{0}({y}^{2}_{0}), \dots, {\gamma}^{N}_{t-1}({y}^{N}_{t-1})\bigg)}\nonumber\\
&\scalemath{1}{=\prod_{t=0}^{T-1}\prod_{i=1}^{N}{P}\bigg({y}^{i}_{t}\in A^{i}\bigg|x_{0}^{i},{\zeta}^{i}_{0:t-1},{y}_{\downarrow t}^{\downarrow\downarrow i},{\gamma}^{\downarrow\downarrow i}_{\downarrow t}({y}^{\downarrow\downarrow i}_{\downarrow t})\bigg)}\label{eq:a},
\end{flalign}
where $y_{\downarrow t}^{\downarrow\downarrow i}:=\{y_{p}^{j}|u^{j}_{p}~\text{affects}~y^{i}_{t}~\forall~p=0,\dots,t-1~\text{and}~\forall j=1,\dots, N\}$ and $({\gamma}^{\downarrow\downarrow i}_{\downarrow t}({y}^{\downarrow\downarrow i}_{\downarrow t}))$ can be defined similarly, 
\begin{itemize}
\item [(i)] then, the team problem is exchangeable.
\item [(ii)]  Furthermore, if the team problem is convex in policies (see Theorem \ref{the:cp}), then the team is symmetrically optimal.
\end{itemize}
\end{theorem}
\begin{proof}
We first show that for any permutation $\sigma \in S$, $J_{T}((\underline{\gamma}_{T}^{\sigma})^{1},\dots, (\underline{\gamma}_{T}^{\sigma})^{N})=J_{T}(\underline{\gamma}_{T}^{1},\dots, \underline{\gamma}_{T}^{N})$, i.e., the team is exchangeable. We have,
\begin{flalign}
&J_{T}\left((\underline{\gamma}_{T}^{\sigma})^{1},\dots, (\underline{\gamma}_{T}^{\sigma})^{N}\right)\nonumber\\
&=\int {c}\left(\underline{\zeta}^{1:N}, (\underline{\gamma}_{T}^{\sigma})^{1}(\underline{y}^{1}),\dots,(\underline{\gamma}_{T}^{\sigma})^{N}(\underline{y}^{N})\right){\mu}(dx_{0}^{1:N}, d{\zeta}^{1:N}_{0:T-1})\label{eq:3.6}\\
&~~~~~~\:\:\:\:\:\:\times\prod_{t=0}^{T-1}\prod_{i=1}^{N}{P}\left(d{y}^{i}_{t}\middle|x_{0}^{i}, \zeta_{0:t-1}^{i},{y}_{\downarrow t}^{\downarrow\downarrow i},({\gamma}^{\sigma})^{\downarrow\downarrow i}_{\downarrow t}({y}^{\downarrow\downarrow i}_{\downarrow t})\right)\nonumber\\
&=\int {c}\left((\underline{\zeta}^{\sigma})^{1:N}, (\underline{\gamma}_{T}^{\sigma})^{1}((\underline{y}^{\sigma})^{1}),\dots,(\underline{\gamma}_{T}^{\sigma})^{N}((\underline{y}^{\sigma})^{N})\right){\mu}(d(x_{0}^{\sigma})^{1:N}, d({\zeta}^{\sigma}_{0:T-1})^{1:N})\label{eq:3.7}\\
&~~~~~~\:\:\:\:\:\:\times\prod_{t=0}^{T-1}\prod_{i=1}^{N}{P}\left(d({y}^{\sigma})^{i}_{t}\middle|(x_{0}^{\sigma})^{i}, (\zeta_{0:t-1}^{\sigma})^{i},{(y^{\sigma})}_{\downarrow t}^{\downarrow\downarrow i},({\gamma}^{\sigma})^{\downarrow\downarrow i}_{\downarrow t}(({y^{\sigma}})^{\downarrow\downarrow i}_{\downarrow t})\right)\nonumber\\
&=\int {c}\left(\underline{\zeta}^{1:N}, \underline{\gamma}^{1}_{T}(\underline{y}^{1}),\dots,\underline{\gamma}^{N}_{T}(\underline{y}^{N})\right){\mu}(dx_{0}^{1:N}, d{\zeta}^{1:N}_{0:T-1})\label{eq:3.8}\\
&~~~~~~\:\:\:\:\:\:\times\prod_{t=0}^{T-1}\prod_{i=1}^{N}{P}\left(d{y}^{i}_{t}\middle|x_{0}^{i}, \zeta_{0:t-1}^{i},{y}_{\downarrow t}^{\downarrow\downarrow i},{\gamma}^{\downarrow\downarrow i}_{\downarrow t}({y}^{\downarrow\downarrow i}_{\downarrow t})\right)\nonumber\\
&=J_{T}(\underline{\gamma}_{T}^{1},\dots, \underline{\gamma}_{T}^{2})\nonumber,
\end{flalign}
where \eqref{eq:3.6} follows from condition (c). Equality \eqref{eq:3.7} follows from  exchanging $\underline{y}^{i}$, $\underline{\zeta}^{i}$ with $(\underline{y}^{\sigma})^{i}$, $(\underline{\zeta}^{\sigma})^{i}$ by relabeling them, respectively. Since the information structure is symmetric, \eqref{eq:3.3} and condition (b) imply \eqref{eq:3.8}. Hence, the team is exchangeable.  Let $\underline{\gamma}^{*}_{T}=(\underline{\gamma}_{T}^{1*},\dots, \underline{\gamma}_{T}^{N*})$ be a given policy. Consider $\underline{\tilde{\gamma}}_{T}$ as a convex combination of all possible permutations of policies by averaging them. Since action spaces are convex by condition (a), $\underline{\tilde{\gamma}}_{T}$ is a control policy. Following from convexity of the cost function in policies, we have 
 \begin{flalign*}
J_{T}(\underline{\tilde{\gamma}}_{T})&:=J_{T}(\sum_{\sigma\in S}\frac{1}{|S|}\underline{\gamma}^{*,\sigma}_{T}) \leq \sum_{\sigma\in S}\frac{1}{|S|}J_{T}(\underline{\gamma}^{*,\sigma}_{T}) 
=J_{T}(\underline{\gamma}^{*}_{T}),
 \end{flalign*}
where $|S|$ denotes the cardinality of the set $S$ and the inequality above follows from the hypothesis that the team problem is convex on ${\bf \Gamma}$ and the last equality follows from exchangeability of the team problem. This implies that the team is symmetrically optimal and completes the proof.
\end{proof}

Examples will be given in Section \ref{sec:mft} and Section \ref{sec:3a} where Theorem \ref{the:3.1} can be applied.
Here, we present the result for a class of problems that admit a static reduction (see \cite[Section 3.7]{YukselBasarBook}, \cite [Section 1.2]{YukselSaldiSICON17}, \cite{ho1973equivalence, wit88}). 

\begin{lemma}\label{lem:stareduction}
Consider a dynamic team problem with a symmetric partially nested information structure (see Definition \ref{def:s}) which admits a static reduction. Under Assumption \ref{assump:3.1}, and Assumptions (a), (b), (c) of Theorem \ref{the:3.1}, if the cost function is jointly convex in $\underline{u}^{ 1},\dots,\underline{u}^{N}$ ${P}$-almost surely, then the team is symmetrically optimal.
\end{lemma}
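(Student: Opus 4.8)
The plan is to obtain this lemma as a direct corollary of Theorem~\ref{the:3.1}(ii). That theorem rests on two sets of hypotheses: conditions (a)--(c) together with Assumption~\ref{assump:3.1}, and convexity of the team problem in policies. The former are assumed outright in the lemma, so Theorem~\ref{the:3.1}(i) already furnishes exchangeability of the team with no additional work. The entire burden is therefore to show that, under the extra structure postulated here (symmetric partially nested information, existence of a static reduction, and $\mathbb{P}$-almost sure joint convexity of the cost in $\underline{u}^{1},\dots,\underline{u}^{N}$), the team problem is convex on ${\bf \Gamma}$; granting this, Theorem~\ref{the:3.1}(ii) returns symmetric optimality at once.

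To produce convexity in policies I would invoke Theorem~\ref{the:cp}(iii), by which a partially nested dynamic team admitting a static reduction is convex on ${\bf \Gamma}$ whenever its \emph{statically reduced} cost is convex in actions. It thus remains to transfer the assumed joint convexity of $c(\underline{\zeta}^{1:N},\underline{u}^{1:N})$ through the reduction. Here I would exploit the fact that, for a partially nested information structure, every action that influences an observation $y^{i}_{t}$ is measurable with respect to the information of the DM making that observation. Consequently the static reduction can be carried out by an invertible, DM-measurable change of the observation variables --- subtracting off the contribution of these already-known past actions --- which turns each observation into a function of the primitive variables alone while leaving $c(\underline{\zeta}^{1:N},\underline{u}^{1:N})$ unaltered (more generally, multiplying it only by a nonnegative factor $\Lambda(\underline{\zeta}^{1:N})$ independent of the actions). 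Joint convexity of $c$ in $\underline{u}^{1:N}$ is therefore inherited by the statically reduced cost, and Theorem~\ref{the:cp}(iii) delivers convexity on ${\bf \Gamma}$.

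With exchangeability and convexity in policies both secured, I would finish exactly as in the proof of Theorem~\ref{the:3.1}(ii): starting from a team-optimal policy, average it over all permutations in $\Sigma$ to obtain an identically symmetric policy, which is admissible because the action spaces are convex and whose cost does not exceed the optimum by convexity and exchangeability. I expect the one genuinely delicate step to be the middle paragraph: one must verify that the static reduction leaves the action-dependence of the cost intact (equivalently, that its change-of-measure factor is action-free), since any residual action dependence introduced by the reduction could destroy the convexity that Theorem~\ref{the:cp}(iii) requires.
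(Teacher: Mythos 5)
Your proposal is correct and follows essentially the same route as the paper, whose entire proof is the one-line observation that the result follows from Theorem~\ref{the:cp}(iii) and Theorem~\ref{the:3.1} since the team is convex on ${\bf \Gamma}$. Your middle paragraph, verifying that the partially nested (Ho--Chu type) reduction leaves the action-dependence of the cost intact so that joint convexity passes to the statically reduced cost, is a detail the paper leaves implicit, and you have identified and resolved it correctly.
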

We note again that here by symmetry, we mean symmetry across the decision makers.
\begin{proof}
The proof follows from Theorem \ref{the:cp}(iii) and Theorem \ref{the:3.1} since the team is convex on ${\bf \Gamma}$ under the static reduction which is equivalent to the dynamic problem.
\end{proof}

Hence, it follows that if a static reduction of an exchangeable, symmetrically optimal, dynamic team exists, then it is exchangeable and symmetrically optimal.

\section{Convex mean-field teams with a symmetric information structure}\label{sec:mft}

In the following, we establish global optimality results for convex mean-field teams with a symmetric information structure (that is not necessarily partially nested).  

Define state dynamics and observations as 
\begin{flalign}
&x_{t+1}^{i}=f_{t}(x_{t}^{i},u_{t}^{i},w_{t}^{i}),\label{eq:mfdynamics}\\
&y_{t}^{i}=h_{t}(x_{0:t}^{i},u_{0:t-1}^{i},v_{0:t}^{i})\label{eq:mfobs},
\end{flalign}
where functions $f_{t}$ and $h_{t}$ are measurable functions. The information structure of DM$^{i}$ at time $t$ is $I_{t}^{i}=\{{y}^{i}_{t}\}$, and $\zeta_{t}^{i}:=(w_{t}^{i}, v_{t}^{i})$ (with $\zeta_{0}^{i}:=(x_{0}^{i}, w_{0}^{i}, v_{0}^{i})$) denotes uncertainty corresponding to dynamics and observations at time $t$ for DM$^i$ which are exogenous random vectors in a standard Borel space. Denote $\mathbb{X}\subseteq \mathbb{R}^{m}$, $\mathbb{U}\subseteq \mathbb{R}^{m^{\prime}}$, $\mathbb{Y}\subseteq \mathbb{R}^{m^{\prime\prime}}$, $\mathbb{W}$, and $\mathbb{V}$ as the state space, action space, observation space and the space of disturbances of dynamics and observations of DMs at each time instances $t=0, \dots, T-1$, respectively, where $m$, $m^\prime$, and $m^{\prime\prime}$ are positive integers.
 \begin{itemize}[wide = 0pt]
  \item[ \textbf{Problem} \bf{($\mathcal{P}_{T}^{N, \text{MF}}$)}:] Consider $N$-DM teams with the expected cost function of $\underline{\gamma}_{T}^{1:N}$ as 
  \begin{flalign}
&\scalemath{1}{J_{T}^{N}(\underline{\gamma}_{T}^{N})=\frac{1}{N}\sum_{t=0}^{T-1}\sum_{i=1}^{N} E^{\underline{\gamma}_{T}^{1:N}}\left[c\left(\omega_{0}, x_{t}^{i},u_{t}^{i},\frac{1}{N}\sum_{p=1}^{N}u_{t}^{p},\frac{1}{N}\sum_{p=1}^{N}x_{t}^{p}\right)\right]}\label{eq:mfcost},
\end{flalign}
where $\omega_{0}:(\Omega,\mathcal{F}) \to (\Omega_{0},\mathcal{F}_{0})$ is an exogenous random vector in the standard Borel space and $\underline{\gamma}_{T}^{1:N}=\gamma^{1:N}_{0:T-1}$, and the cost function satisfies the following assumption.

  \item[\textbf{Problem} \bf{($\mathcal{P}_{T}^{\infty, \text{MF}}$)}:]
Consider mean-field teams with the expected cost function of $\underline{\gamma}_{T}$ as
\begin{flalign}
&\scalemath{1}{J_{T}^{\infty}(\underline{\gamma}_{T})=\limsup\limits_{N\rightarrow \infty}J_{T}^{N}(\underline{\gamma}_{T})}\label{eq:mfcost1},
\end{flalign}
where $J_{T}^{N}(\cdot)$ is defined in \eqref{eq:mfcost}, $\underline{\gamma}_{T}^{i}=\gamma^{i}_{0:T-1}$ for $i \in \mathbb{N}$  and $\underline{\gamma}_{T}=\{\underline{\gamma}_{T}^{i}\}_{i \in \mathbb{N}}$.
\end{itemize}
\begin{assumption}\label{assump:c} Assume
 \begin{itemize}
 \item[(a)] function $f_{t}:\mathbb{X} \times \mathbb{U} \times \mathbb{W} \to \mathbb{X}$ is continuous in its first and second arguments for all $w_{t}^{i}$ and for each $i\in \mathbb{N}$ and uniformly bounded, 
 \item[(b)] function $h_{t}:\prod_{k=0}^{t}\mathbb{X} \times \prod_{k=0}^{t-1}\mathbb{U} \times \prod_{k=0}^{t}\mathbb{V} \to \mathbb{Y}$ is continuous in states and actions for all $v_{0:t}^{i}$ and for each $i\in \mathbb{N}$, and
 \item [(c)] the cost function in \eqref{eq:mfcost}, $c:\Omega_{0}\times \mathbb{X}\times \mathbb{U} \times \mathbb{U} \times \mathbb{X} \to \mathbb{R}_{+}$, is continuous in its second, third, fourth, and fifth arguments for all $\omega_{0}$.
 \end{itemize}
 \end{assumption}
\subsection{Mean-field optimal policies as limits of optimal $N$-DM teams}
In the following, we first establish global optimality results under Assumption \ref{assump:2} (see Theorem \ref{the:mft}), then we establish the result under a more relaxed assumption, Assumption \ref{assump:ergodic} (see Theorem \ref{the:mftergodic}):
\begin{assumption}\label{assump:2}
Assume 
\begin{itemize}
\item[(i)] $(x_{0}^{1},x_{0}^{2}, \dots)$ are i.i.d. random vectors conditioned on $\omega_{0}$,  
\item[(ii)] for $t=0,\dots,T-1$, $\{w^{i}_{t}\}_{i\in \mathbb{N}}$ are i.i.d. random vectors, and for $i\in \mathbb{N}$, $\{w^{i}_{t}\}_{t=0}^{T-1}$ are mutually independent, and independent of $\omega_{0}$ and $(x_{0}^{1},x_{0}^{2}, \dots)$. For $t=0,\dots,T-1$, $\{v^{i}_{t}\}_{i\in \mathbb{N}}$ are i.i.d. random vectors, and for $i\in \mathbb{N}$, $\{v^{i}_{t}\}_{t=0}^{T-1}$ are mutually independent, and independent of $\omega_{0}$, $(x_{0}^{1},x_{0}^{2}, \dots)$, and $w^{i}_{t}$s for $i\in \mathbb{N}$ and $t=0,\dots,T-1$.
\end{itemize}
\end{assumption}
\begin{assumption}\label{assump:ergodic}Assume that conditioned on $\omega_{0}$, $(x_{0}^{1},x_{0}^{2}, \dots)$ are exchangeable random vectors.
\end{assumption}

Later on we will establish an existence theorem under Assumption \ref{assump:2}, and we note that the proof under Assumption \ref{assump:2} is more direct. This is why two separate theorems will be presented, and the proof of the latter will be built on that of the former.

\begin{lemma}\label{lem:mflem}
Consider a team defined as ($\mathcal{P}_{T}^{N,\text{MF}}$) with a symmetric information structure. Assume the team problem is convex in policies. Let the action space be compact and convex for each decision makers. Under Assumption \ref{assump:c} and Assumption \ref{assump:2}, the team is symmetrically optimal.
\end{lemma}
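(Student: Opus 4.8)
The plan is to verify that the team $(\mathcal{P}_{T}^{N,\text{MF}})$ meets every hypothesis of Theorem \ref{the:3.1} and then to apply that theorem verbatim. The symmetric information structure and convexity in policies are assumed outright, and the action spaces are assumed compact and convex, so condition (a) of Theorem \ref{the:3.1} holds. What remains is to check the permutation invariance in Assumption \ref{assump:3.1} of the aggregated cost, the exchangeability condition (b), and the kernel factorization \eqref{eq:a} in condition (c). Granting these, Theorem \ref{the:3.1}(i) yields exchangeability and, because the problem is convex in policies, Theorem \ref{the:3.1}(ii) yields symmetric optimality.

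First I would check Assumption \ref{assump:3.1}. By the decoupled recursion \eqref{eq:mfdynamics} each state $x_{t}^{i}$ is a measurable function of $(\underline{\zeta}^{i},u_{0:t-1}^{i})$ through the single map $f_{t}$ shared by all agents, so the per-realization cost $\frac{1}{N}\sum_{t=0}^{T-1}\sum_{i=1}^{N}c(x_{t}^{i},u_{t}^{i},\bar u_{t},\bar x_{t})$, with $\bar u_{t}=\frac{1}{N}\sum_{p=1}^{N}u_{t}^{p}$ and $\bar x_{t}=\frac{1}{N}\sum_{p=1}^{N}x_{t}^{p}$, is built from permutation-invariant averages together with an outer sum over $i$. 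A permutation $\sigma$ of the agent labels fixes $\bar u_{t},\bar x_{t}$ and merely permutes the summands among themselves, leaving the total cost invariant, which is exactly \eqref{eq:3.3}. For condition (b), Assumption \ref{assump:2}(i) makes $\{x_{0}^{i}\}$ exchangeable and Assumption \ref{assump:2}(ii) makes the noises $\{w_{t}^{i}\}_{i}$ and $\{v_{t}^{i}\}_{i}$ i.i.d.\ across agents and independent of the initial states; since $\underline{\zeta}^{i}=(x_{0}^{i},\{w_{t}^{i}\}_{t},\{v_{t}^{i}\}_{t})$ concatenates an exchangeable family with independent i.i.d.\ families, $(\underline{\zeta}^{1},\dots,\underline{\zeta}^{N})$ is exchangeable.

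The crux, and the step I expect to require the most care, is condition (c): the factorization \eqref{eq:a}. Here the decisive feature of $(\mathcal{P}_{T}^{\infty,\text{MF}})$ is the absence of mean-field coupling in \eqref{eq:mfdynamics}--\eqref{eq:mfobs}: the observation $y_{t}^{i}=h_{t}(x_{0:t}^{i},u_{\downarrow t}^{i},v_{0:t}^{i})$ depends only on agent $i$'s own state trajectory, own past actions, and own observation noise, and $x_{0:t}^{i}$ is in turn a function of $\underline{\zeta}^{i}$ and $u_{0:t-1}^{i}$. Hence the ancestor observations $y_{\downarrow t}^{\downarrow\downarrow i}$ and the policy map $\gamma_{\downarrow t}^{\downarrow\downarrow i}$ entering \eqref{eq:a} live entirely within agent $i$'s own sub-graph, with no cross-agent dependence creeping in through the recursively defined $u_{\downarrow t}^{i}=\gamma_{\downarrow t}^{i}(y_{\downarrow t}^{i})$. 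Conditioning on $\underline{\zeta}^{1:N}$ and the relevant pasts therefore renders each $y_{t}^{i}$ a deterministic function of $\underline{\zeta}^{i}$ and $y_{\downarrow t}^{\downarrow\downarrow i}$, i.e.\ of disjoint agent-specific data, so the conditional law of $y_{t}^{1:N}$ is the product of the per-agent Dirac kernels on the right-hand side of \eqref{eq:a}. The continuity Assumptions \ref{assump:cc} and \ref{assump:c} ensure that all these induced kernels and the cost are well defined and measurable.

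Having verified Assumption \ref{assump:3.1} and conditions (a)--(c), I would invoke Theorem \ref{the:3.1}(i) to conclude that the $N$-DM mean-field team is exchangeable, and then, using the assumed convexity in policies, Theorem \ref{the:3.1}(ii) to conclude that it is symmetrically optimal.
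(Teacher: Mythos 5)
Your proposal is correct and follows exactly the paper's route: the paper's entire proof of this lemma is the single line ``The proof follows from Theorem \ref{the:3.1},'' and your argument is precisely that application, with the added (and welcome) diligence of explicitly verifying Assumption \ref{assump:3.1} and conditions (a)--(c) --- in particular that the decoupled dynamics \eqref{eq:mfdynamics}--\eqref{eq:mfobs} yield the factorization \eqref{eq:a} --- which the paper leaves implicit.
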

\begin{proof}
The proof follows from Theorem \ref{the:3.1}.
\end{proof}

 \begin{theorem}\label{the:mft}
 Consider a team defined as  ($\mathcal{P}_{T}^{\infty,\text{MF}}$) with ($\mathcal{P}_{T}^{N,\text{MF}}$) having a symmetric information structure for every $N$. Assume for every $N$ the team problem is convex in policies. Let the action space be compact and convex for each DM. Under Assumption \ref{assump:c}, and Assumption \ref{assump:2}, if there exists a sequence of optimal policies for ($\mathcal{P}_{T}^{N,MF}$), $\{\underline{\gamma}^{*,N}_{T}\}_N$, which converges (for every DM due to the symmetry) pointwise to $\underline{\gamma}^{*,\infty}_{T}$ as $N \to \infty$, then $\underline{\gamma}^{*,\infty}_{T}$  (which is identically symmetric) is an optimal policy for ($\mathcal{P}_{T}^{\infty,MF}$).
\end{theorem}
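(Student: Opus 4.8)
The plan is to sandwich the mean-field value $\inf_{\underline{\gamma}_T}J_T^\infty(\underline{\gamma}_T)$ between $\liminf_N m_N$ and $\limsup_N m_N$, where $m_N:=J_T^N(\underline{\gamma}^{*,N}_T)$ is the optimal $N$-DM value, and to identify this common limit with $J_T^\infty(\underline{\gamma}^{*,\infty}_T)$. By Lemma \ref{lem:mflem} each $N$-DM team is symmetrically optimal, so I take every $\underline{\gamma}^{*,N}_T$ identically symmetric; the pointwise limit $\underline{\gamma}^{*,\infty}_T$ is then identically symmetric as well, and under any identically symmetric policy exchangeability collapses the sum over $i$ in \eqref{eq:mfcost}, giving $J_T^N(\gamma)=\sum_{t=0}^{T-1}\mathbb{E}[c(x_t^1,u_t^1,\tfrac1N\sum_p u_t^p,\tfrac1N\sum_p x_t^p)]$. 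The easy lower bound comes first: for any admissible infinite-team policy $\underline{\gamma}_T$, since $J_T^N$ reads only the first $N$ DMs and $m_N$ is the global $N$-DM optimum, $J_T^N(\underline{\gamma}_T)\ge m_N$ for every $N$; taking $\limsup_N$ yields $J_T^\infty(\underline{\gamma}_T)\ge\limsup_N m_N$, hence $\inf_{\underline{\gamma}_T}J_T^\infty(\underline{\gamma}_T)\ge\limsup_N m_N$.

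The crux is to show $\lim_N m_N$ exists and equals $J_T^\infty(\underline{\gamma}^{*,\infty}_T)$. The key structural fact is that the information structure forbids mean-field coupling in the dynamics \eqref{eq:mfdynamics}--\eqref{eq:mfobs}: under an identically symmetric policy $\gamma$ each trajectory $(x_{0:T}^i,u_{0:T-1}^i)$ is a measurable function of $\underline{\zeta}^i$ and $\gamma$ alone, so by Assumption \ref{assump:2} these trajectories are exchangeable in $i$, and the de Finetti / exchangeable law of large numbers gives $\tfrac1N\sum_{i=1}^N x_t^i\to\mathbb{E}[x_t^1\mid\mathcal{G}]$ and $\tfrac1N\sum_{i=1}^N u_t^i\to\mathbb{E}[u_t^1\mid\mathcal{G}]$ almost surely, with $\mathcal{G}$ the exchangeable $\sigma$-field (deterministic when the initial states are i.i.d.). For the fixed limit policy, continuity of $c$ (Assumption \ref{assump:cc}) together with uniform integrability — obtained from compactness of the action spaces and state moment bounds propagated through the continuous dynamics — yields, by dominated convergence, $J_T^N(\underline{\gamma}^{*,\infty}_T)\to L^\infty:=\sum_{t}\mathbb{E}[c(x_t^1,u_t^1,\mathbb{E}[u_t^1\mid\mathcal{G}],\mathbb{E}[x_t^1\mid\mathcal{G}])]$, so in particular $J_T^\infty(\underline{\gamma}^{*,\infty}_T)=L^\infty$.

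For the changing policies I transfer the law of large numbers to the triangular array of $N$-optimal trajectories. Pointwise convergence $\underline{\gamma}^{*,N}_T\to\underline{\gamma}^{*,\infty}_T$ and continuity of $f_t,h_t$ (Assumption \ref{assump:c}) propagate through the finite-horizon recursion so that the $N$-optimal trajectories $(\tilde{x}_t^{i,N},\tilde{u}_t^{i,N})$ converge almost surely to $(x_t^i,u_t^i)$; exchangeability makes $\mathbb{E}\lvert \tilde{x}_t^{i,N}-x_t^i\rvert$ independent of $i$, so $\tfrac1N\sum_i(\tilde{x}_t^{i,N}-x_t^i)$ vanishes in $L^1$ and the empirical averages of the $N$-optimal trajectories inherit the same mean-field limits. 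Since $c\ge0$, Fatou's lemma gives $\liminf_N m_N\ge L^\infty$, while $m_N\le J_T^N(\underline{\gamma}^{*,\infty}_T)\to L^\infty$ gives $\limsup_N m_N\le L^\infty$; hence $\lim_N m_N=L^\infty=J_T^\infty(\underline{\gamma}^{*,\infty}_T)$. Combined with the lower bound, $J_T^\infty(\underline{\gamma}^{*,\infty}_T)=\limsup_N m_N\le J_T^\infty(\underline{\gamma}_T)$ for every $\underline{\gamma}_T$, so $\underline{\gamma}^{*,\infty}_T$ is optimal for $(\mathcal{P}_T^{\infty,\mathrm{MF}})$.

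I expect the main obstacle to be precisely the concentration/continuity transfer in the third paragraph. Two delicate points must be controlled: first, the composition $\gamma^{*,N}_t(y_t^{*,N})$ pairs a merely pointwise-convergent sequence of policies with an argument that itself moves with $N$, so establishing $\tilde{u}_t^{i,N}\to u_t^i$ will require more than bare pointwise convergence — I would close this using continuity of the limit policy together with an upgrade of the convergence to uniform-on-compacts (e.g.\ via equicontinuity of the symmetrized optimal policies), or by arguing directly on the induced laws. Second, identifying $J_T^\infty(\underline{\gamma}^{*,\infty}_T)$ with $L^\infty$ needs uniform integrability of the cost along $N$, which is where compactness of the action spaces and the moment bounds enter. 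The Fatou direction, by contrast, is robust and uses only nonnegativity and continuity of $c$.
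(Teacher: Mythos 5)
Your proposal is correct in outline and follows the same skeleton as the paper's proof: symmetrization through Lemma \ref{lem:mflem}, a conditional-i.i.d./exchangeability law-of-large-numbers step for empirical averages under identical policies, a Fatou argument to lower-bound the limit of the $N$-optimal values, the comparison $m_N\le J_T^N(\underline{\gamma}^{*,\infty}_T)$ coming from optimality of $\underline{\gamma}^{*,N}_T$, and your restriction sandwich, which is literally the paper's closing chain around \eqref{eq:34q}. The genuine differences are in the machinery. You argue trajectory-wise via de Finetti's theorem and ask for uniform integrability of the cost; the paper instead conditions on $\omega_{0}$, packages the trajectories into empirical measures $Q_N^{\omega},\tilde{Q}_N^{\omega}$ on ${\bf U}\times{\bf Y}\times{\bf S}$, proves $\mathbb{P}$-almost sure convergence in the $w$-$s$ topology, and then---this is what your sketch should borrow---never assumes uniform integrability of the cost: it truncates $\tilde{c}$ at level $M$, applies Serfozo's theorem on convergence of integrals with varying measures \cite{serfozo1982convergence} to the bounded truncation, and removes the truncation by monotone convergence, so only nonnegativity and continuity of $c$ are used in the Fatou direction. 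Compactness of the action space enters only to obtain \eqref{eq:ucon}, and Remark \ref{rem:mft} replaces it by exactly the kind of moment condition you invoke, so your second ``delicate point'' is handled in the paper by truncation rather than by a uniform-integrability hypothesis.

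On your first flagged obstacle---the composition $\gamma_t^{*,N}(y_t^{i,N})$, where both the policy and its argument move with $N$---you are right that bare pointwise convergence does not survive such composition (e.g.\ $\gamma^N(y)=\mathds{1}\{y\ge 1/N\}$ evaluated at $y^N=1/N$ versus the pointwise limit at $y=0$), and you should know that the paper does not close this step any more rigorously than you do: its Step 2 writes $x_t^i,y_t^i$ as continuous functions $\tilde{f}_{t-1},\tilde{h}_t$ of $(x_0^i,\underline{\zeta}^{i},u_{0:t}^i)$ and then asserts the convergence of the induced costs ``by induction,'' which faces precisely the same difficulty. So this is a shared soft spot of both arguments rather than a defect of yours relative to the paper; the repairs you propose (continuous convergence or equicontinuity of the symmetrized optimal policies, or arguing at the level of induced laws---which is in effect what the paper's measure-level formulation gestures at) are what a fully rigorous version of either proof would require.
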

\begin{proof}
 Following from Lemma  \ref{lem:mflem}, one can consider a sequence of $N$-DM teams which are symmetrically optimal that defines ($\mathcal{P}_{T}^{N,\text{MF}}$) and whose limit is identified with $(\mathcal{P}_{T}^{\infty,\text{MF}})$. Define 
 \begin{flalign}
\scalemath{1}{ Q_{N}(B):=\frac{1}{N}\sum_{i=1}^{N}\delta_{\beta_{N}^{i}}(B)\:\:\:\:\:\:\text{where}\:\:\:\:\:\:\beta_{N}^{i}:=(\underline{\gamma}^{*,N}_{T}(\underline{y}^{i}),\underline{y}^{i},\underline{\zeta}^{i})},\label{eq:q}\\
\scalemath{1}{ \tilde{Q}_{N}(B):=\frac{1}{N}\sum_{i=1}^{N}\delta_{{\beta}_{\infty}^{i}}(B)\:\:\:\:\:\ \text{where}\:\:\:\:\:\:{\beta}_{\infty}^{i}:=(\underline{\gamma}^{*,\infty}_{T}(\underline{y}^{i}),\underline{y}^{i},\underline{\zeta}^{i})}\nonumber,
 \end{flalign}
 where $\delta_{Y}(\cdot)$ denotes the Dirac measure for any random vector $Y$, and $B \in \mathcal{Z}:={\bf{U}}\times {\bf{Y}} \times {\bf{S}}$, ${\bf{U}}:=(\prod_{t=0}^{T-1}\mathbb{U})$, ${\bf{Y}}:=(\prod_{t=0}^{T-1}\mathbb{Y})$, ${\bf{S}}:=(\prod_{t=0}^{T-1}{\mathbb{S}})=\mathbb{X}\times (\prod_{t=0}^{T-1}{\mathbb{W}\times \mathbb{V}})$, ${\bf X}=(\prod_{t=0}^{T-1}\mathbb{X})$, $\underline{y}^{i}=(y_{0}^{i},\dots,y_{T-1}^{i})$, and $\underline{\zeta}^{i}:=(\zeta^{i}_{0},\dots,\zeta_{T-1}^{i})$.
 
 In the following, first, we show that conditioned on $\omega_{0}$, $Q_{N}$ converges ${P}$-almost surely to $Q=\mathcal{L}(\beta^1_{\infty}|\omega_{0})$ in $w$-$s$ topology (coarsest topology on $\mathcal{P}({\bf{U}}\times {\bf{Y}} \times {\bf{S}})$  under which $\int f(u,y,\zeta)Q_{N}(du, dy, d\zeta): \mathcal{P}({\bf{U}}\times {\bf{Y}} \times {\bf{S}}) \to \mathbb{R}$ is continuous for every measurable and bounded function $f$ which is continuous in $u$ and $y$ but need not to be continuous in $\zeta$ (see e.g., \cite{Schal} and \cite[Theorem 5.6]{yuksel2018general})). Then, we show that 
 \begin{equation*}
 \limsup\limits_{N\rightarrow \infty}J_{T}^{N}(\underline{\tilde{\gamma}}_{T}^{*,N})=J_{T}^{\infty}(\underline{\tilde{\gamma}}^{*,\infty}_{T}),
 \end{equation*}
 where $\underline{\tilde{\gamma}}_{T}^{*,N}:=(\underline{{\gamma}}_{T}^{*,N},\underline{{\gamma}}_{T}^{*,N},\dots,\underline{{\gamma}}_{T}^{*,N})$ and $\underline{\tilde{\gamma}}^{*,\infty}_{T}:=(\underline{{\gamma}}^{*,\infty}_{T},\underline{{\gamma}}^{*,\infty}_{T},\dots)$.
 \begin{itemize}[wide]
 \item [\bf{(Step 1):}]
 In this step, we show that conditioned on $\omega_{0}$, $Q_{N}$ converges ${P}$-almost surely to $Q$ in $w$-$s$ topology. First, we show that for every continuous and bounded function $g$ in actions and observations, for every $\omega_{0}$ on a set of ${P}$-measure one,
\begin{flalign}
&\scalemath{1}{{{P}\left(\bigg\{\omega \in \Omega\bigg|\lim\limits_{N\rightarrow \infty}\left(\frac{1}{N}\sum_{i=1}^{N}\left[g(\underline{\gamma}^{*,N}_{T}(\underline{y}^{i}),\underline{y}^{i},\underline{\zeta}^{i})-g(\underline{\gamma}^{*,\infty}_{T}(\underline{y}^{i}),\underline{y}^{i},\underline{\zeta}^{i})\right]\right)=0\bigg\}\middle|\omega_{0}\right)=1}}\label{eq:4.1.1}.
\end{flalign}
Following from  symmetry of the information structure and Lemma \ref{lem:mflem}, every DM applies an identical optimal policy $\underline{\gamma}_{T}^{*,N}$ and since functions $f_{t}$ and $h_{t}$ are identical for each DM, conditioned on $\omega_{0}$, $(\underline{\gamma}^{*,N}_{T}(\underline{y}^{i}),\underline{y}^{i},\underline{\zeta}^{i})$ and $(\underline{\gamma}^{*,\infty}_{T}(\underline{y}^{i}),\underline{y}^{i},\underline{\zeta}^{i})$ are i.i.d. random vectors. For every $\epsilon>0$ and for every function $g$ continuous and bounded in actions and observations, we have $P$-almost surely
\begin{flalign}
&\scalemath{1}{\lim\limits_{N\rightarrow \infty}{P}\left(\left|\int g dQ_{N}-\int g d\tilde{Q}_{N}\right|\geq\epsilon\bigg|\omega_{0}\right)}\nonumber\\ 
&\scalemath{1}{\leq\epsilon^{-1}\lim\limits_{N\rightarrow \infty}\frac{1}{N}\sum_{i=1}^{N} E\left[\left|g(\underline{\gamma}^{*,N}_{T}(\underline{y}^{i}),\underline{y}^{i},\underline{\zeta}^{i})-g(\underline{\gamma}^{*,\infty}_{T}(\underline{y}^{i}),\underline{y}^{i},\underline{\zeta}^{i})\right|\bigg| \omega_{0}\right]}\label{eq:y4.1.1}\\
&\scalemath{1}{=\epsilon^{-1}\lim\limits_{N\rightarrow \infty}  E\left[\left|g(\underline{\gamma}^{*,N}_{T}(\underline{y}^{i}),\underline{y}^{i},\underline{\zeta}^{i})-g(\underline{\gamma}^{*,\infty}_{T}(\underline{y}^{i}),\underline{y}^{i},\underline{\zeta}^{i})\right|\bigg| \omega_{0}\right]}\label{eq:y199.5.1}\\
&\scalemath{1}{=\epsilon^{-1} E\left[\lim\limits_{N\rightarrow \infty}\left|g(\underline{\gamma}^{*,N}_{T}(\underline{y}^{i}),\underline{y}^{i},\underline{\zeta}^{i})-g(\underline{\gamma}^{*,\infty}_{T}(\underline{y}^{i}),\underline{y}^{i},\underline{\zeta}^{i})\right|\bigg| \omega_{0}\right]=0} \label{eq:y199.5.2},
\end{flalign}
where \eqref{eq:y4.1.1} follows from Markov's inequality, the triangle inequality and the definition of the empirical measure, and \eqref{eq:y199.5.1} follows from the fact that $(\underline{\gamma}^{*,N}_{T}(\underline{y}^{i}),\underline{y}^{i},\underline{\zeta}^{i})$ and $(\underline{\gamma}^{*,\infty}_{T}(\underline{y}^{i}),\underline{y}^{i},\underline{\zeta}^{i})$ are i.i.d. random vectors. Since  $g$ is bounded and continuous, the dominated convergence theorem implies \eqref{eq:y199.5.2}. Hence, for every subsequence, there exists a subsubsequence such that $P$-almost surely  ${{P}\left(\{\omega \in \Omega|\lim\limits_{N \to \infty}\left(\int g dQ_{N}-\int g d\tilde{Q}_{N}\right)=0\}\big|\omega_{0}\right)}=1$.

 Now, we show that conditioned on $\omega_{0}$, $\{\tilde{Q}_{N}\}_{N}$ converges weakly to $Q$ ${P}$-almost surely. Since conditioned on $\omega_{0}$,  $(\underline{\gamma}^{*,\infty}_{T}(\underline{y}^{i}),\underline{y}^{i},\underline{\zeta}^{i})$ are i.i.d. random vectors, the strong law of large numbers implies that $P$-almost surely
\begin{flalign}
\scalemath{0.93}{{P}\left(\bigg\{\omega\in \Omega\bigg|\lim\limits_{N\rightarrow \infty}\left(\frac{1}{N}\sum_{i=1}^{N}g\bigg(\underline{\gamma}^{*,\infty}_{T}(\underline{y}^{i}),\underline{y}^{i},\underline{\zeta}^{i}\bigg)- E\bigg[g(\underline{\gamma}^{*,\infty}_{T}(\underline{y}^{1}),\underline{y}^{1},\underline{\zeta}^{1})\bigg|\omega_{0}\bigg]\right)=0\bigg\}\middle| \omega_{0}\right)=1},\label{eq:stlaw}
\end{flalign}
hence, 
${{P}\left(\{\omega \in \Omega|\lim\limits_{N\rightarrow \infty}\left(\int g d\tilde{Q}_{N}-\int g dQ\right)=0\}\big|\omega_{0}\right)}=1$ $P$-almost surely.

Hence, through choosing a suitable subsequence, for every $\omega_{0}\in \Omega_{0}$ on a set of ${P}$-measure one, for every function $g$ continous and bounded in actions and observations and measurable and bounded in uncertainty and initial states
 \begin{flalign}
\scalemath{1}{\lim\limits_{N\rightarrow \infty}\left|\int g dQ_{N}-\int g dQ\right|}&\scalemath{1}{\leq \lim\limits_{N\rightarrow \infty} \bigg(\left|\int g dQ_{N}-\int g d\tilde{Q}_{N}\right|+\left|\int g d\tilde{Q}_{N}-\int g dQ\right| \bigg)}=0\nonumber,
\end{flalign}
hence, conditioned on $\omega_{0}$, $Q_{N}$ converges weakly to $Q$ ${P}$-almost surely. We note that the convergence is weakly, but since $\underline{\zeta}^{i}$s are exogenous with a fixed probability measure, the convergence is also in the $w$-$s$ topology.
\item [\bf{(Step 2):}] 
Following from \eqref{eq:mfdynamics} and \eqref{eq:mfobs}, we have
\begin{flalign}
x_{t}^{i}&=f_{t-1}(f_{t-2}(\dots f_{0}(x_{0}^{i}, u_{0}^{i},w_{0}^{i})),u_{t-1}^{i},w_{t-1}^{i})=\tilde{f}_{t-1}(\underline{\zeta}^{i},u_{0:t-1}^{i}),\label{eq:stateed}\\
y_{t}^{i}&=h_{t}(x_{0}^{i},\tilde{f}_{0}(\underline{\zeta}^{i},u_{0}^{i}), \dots, \tilde{f}_{t-1}(\underline{\zeta}^{i},u_{0:t-1}^{i}), u_{0:t-1}^{i}, v_{0:t}^{i})=\tilde{h}_{t}(\underline{\zeta}^{i},u_{0:t-1}^{i})\label{eq:obsred},
\end{flalign}
where following from Assumption \ref{assump:c}, $\tilde{f}_{t-1}$ and $\tilde{h}_{t}$ are continuous in actions. Hence, under Assumption \ref{assump:c}(c), we have
\begin{flalign}
&\scalemath{1}{\frac{1}{N}\sum_{i=1}^{N}\sum_{t=0}^{T-1} E^{\underline{\gamma}_{T}^{*,1:N}}\left[c\left(\omega_{0}, x_{t}^{i},u_{t}^{i},\frac{1}{N}\sum_{p=1}^{N}u_{t}^{p},\frac{1}{N}\sum_{p=1}^{N}x_{t}^{p}\right)\right]}\nonumber\\
&\scalemath{1}{=\frac{1}{N}\sum_{i=1}^{N} E\left[\tilde{c}\left(\omega_{0},\underline{\zeta}^{i}, \underline{\gamma}^{*,N}_{T}(\underline{y}^{i}),\frac{1}{N}\sum_{i=1}^{N}\underline{\gamma}^{*,N}_{T}(\underline{y}^{i}),\frac{1}{N}\sum_{i=1}^{N}\Lambda(\underline{\gamma}^{*,N}_{T}(\underline{y}^{i}),\underline{\zeta}^{i})\right)\right]}\label{eq:star1},
\end{flalign}
where \eqref{eq:star1} is true following from \eqref{eq:stateed} for some functions $\tilde{c}:\Omega_{0}\times {\bf S} \times {\bf U} \times {\bf U} \times {\bf X} \to \mathbb{R}_{+}$ which are continuous in its last three arguments and a function $\Lambda: {\bf U} \times {\bf S} \to {\bf X}$ which is continuous in actions. Hence, by induction and rewriting observations as a functions of policies of the past DMs ($\gamma_{\downarrow t}^{*, N}(y_{\downarrow t}^{i})$) since $\underline{\gamma}_{T}^{*,N}$ converges to $\underline{\gamma}_{T}^{*,\infty}$, the induced cost by $\underline{\gamma}_{T}^{*,N}$ also converges to the cost induced by $\underline{\gamma}_{T}^{*,\infty}$ ${P}$-almost surely.
\item [\bf{(Step 3):}] 
We have
\begin{flalign}
&\scalemath{1}{\limsup\limits_{N\rightarrow \infty}\frac{1}{N}\sum_{i=1}^{N}\sum_{t=0}^{T-1} E^{\underline{\gamma}_{T}^{*,1:N}}\left[c\left(\omega_{0}, x_{t}^{i},u_{t}^{i},\frac{1}{N}\sum_{p=1}^{N}u_{t}^{p},\frac{1}{N}\sum_{p=1}^{N}x_{t}^{p}\right)\right]}\nonumber\\
&\scalemath{1}{=\limsup\limits_{N\rightarrow \infty}}\scalemath{1}{\frac{1}{N}\sum_{i=1}^{N} E\left[\tilde{c}\left(\omega_{0},\underline{\zeta}^{i}, \underline{\gamma}^{*,N}_{T}(\underline{y}^{i}),\frac{1}{N}\sum_{i=1}^{N}\underline{\gamma}^{*,N}_{T}(\underline{y}^{i}),\frac{1}{N}\sum_{i=1}^{N}\Lambda(\underline{\gamma}^{*,N}_{T}(\underline{y}^{i}),\underline{\zeta}^{i})\right)\right]}\label{eq:star}\\
&\scalemath{1}{\geq \liminf\limits_{N\rightarrow \infty} E\bigg[ E\bigg[}\scalemath{1}{\int_{\cal{Z}}\tilde{c}\left(\omega_{0},\zeta, u,\int_{\bf U}uQ_{N}(du \times {\bf{Y}}\times {\bf{S}}),\int_{{\bf{U}} \times {\bf{S}}}\Lambda Q_{N}(du \times {\bf{Y}}\times d\zeta)\right)}\label{eq:31y}\\
&~~~~~\:\:\:\:\:\:\:\:\:\:\:\:\:\:\:\scalemath{1}{\times Q_{N}(du,dy,d\zeta)\bigg|\omega_{0}\bigg]\bigg]}\nonumber\\
&\scalemath{1}{\geq  E\bigg[ E\bigg[\liminf\limits_{N\rightarrow \infty}}{\int_{\cal{Z}}\tilde{c}\left(\omega_{0}, \zeta, u,\int_{{\bf{U}}}uQ_{N}(du \times {\bf{Y}}\times {\bf{S}}),\int_{{\bf{U}} \times {\bf{S}}}\Lambda Q_{N}(du \times {\bf{Y}}\times d\zeta)\right)}\label{eq:32y}\\
&~~~~~\:\:\:\:\:\:\:\:\:\:\:\:\:\:\:\scalemath{1}{\times  Q_{N}(du,dy,d\zeta)\bigg|\omega_{0}\bigg]\bigg]}\nonumber\\
&\scalemath{0.97}{\geq E\bigg[ E\bigg[\int_{\cal{Z}}\tilde{c}\left(\omega_{0},\zeta, u,\int_{{\bf{U}}}uQ(du \times {\bf{Y}}\times {\bf{S}}),\int_{{\bf{U}} \times {\bf{S}}}\Lambda Q(du \times {\bf{Y}}\times d\zeta)\right)}\scalemath{1}{Q(du,dy,d\zeta)\bigg|\omega_{0}\bigg]\bigg]}\label{eq:2121},
\end{flalign}
where \eqref{eq:star} follows from \eqref{eq:star1}. Inequality \eqref{eq:31y} follows from \eqref{eq:q} and replacing limsup with liminf, and \eqref{eq:32y} follows from Fatou's lemma. In the following, we justify \eqref{eq:2121}.  Since conditioned on $\omega_{0}$, $Q_{N}$ converges  weakly to $Q$ ${P}$-almost surely, we have $Q_{N}(du \times {\bf{Y}}\times {\bf{S}})$ converges weakly to $Q(du\times {\bf{Y}}\times {\bf{S}})$ ${P}$-almost surely conditioned on $\omega_{0}$, hence, the compactness of ${\bf{U}}$ implies that conditioned on $\omega_{0}$, ${P}$-almost surely
\begin{equation}\label{eq:ucon}
\scalemath{1}{\frac{1}{N}\sum_{i=1}^{N} \underline{\gamma}^{*,N}_{T}(\underline{y}^{i})=\int_{{\bf{U}}}uQ_{N}(du \times {\bf{Y}}\times {\bf{S}}) \xrightarrow{{N \to \infty}} \int_{{\bf{U}}}uQ(du \times {\bf{Y}}\times {\bf{S}})= E[\underline{\gamma}^{*,\infty}_{T}(\underline{y}^{1})|\omega_{0}]}.
\end{equation}
 Since conditioned on $\omega_{0}$, $Q_{N}$ converges  weakly to $Q$ ${P}$-almost surely, we have $Q_{N}(du \times {\bf{Y}}\times d\zeta)$ converges ${P}$-almost surely to $Q(du\times {\bf{Y}}\times d\zeta)$ in $w$-$s$ topology conditioned on $\omega_{0}$. Following from \eqref{eq:stateed}, since $f_{t}$s are bounded and continuous in actions, $\Lambda$ is bounded and continuous in actions, hence, this implies that conditioned on $\omega_{0}$, ${P}$-almost surely
\begin{equation}
\scalemath{1}{\int_{{\bf{U}} \times {\bf{S}}}\Lambda Q_{N}(du \times {\bf{Y}}\times d\zeta) \xrightarrow{{N \to \infty}} \int_{{\bf{U}} \times {\bf{S}}}\Lambda Q(du \times {\bf{Y}}\times d\zeta)} \label{eq:q2}.
\end{equation}
 Since the cost function $\tilde{c}$ is continuous in its last three arguments, ${P}$-almost surely
\begin{flalign}
& \scalemath{1}{\lim\limits_{N\rightarrow \infty}\tilde{c}\left(\omega_{0}, \zeta, u,\int_{\bf U}uQ_{N}(du \times {\bf{Y}}\times {\bf{S}}),\int_{{\bf{U}} \times {\bf{S}}}\Lambda Q_{N}(du \times {\bf{Y}}\times d\zeta)\right)}\nonumber\\
&~~~ \scalemath{1}{=\tilde{c}\left(\omega_{0}, \zeta, u,\int_{\bf U}uQ(du \times {\bf{Y}}\times {\bf{S}}),\int_{{\bf{U}} \times {\bf{S}}}\Lambda Q(du \times {\bf{Y}}\times d\zeta)\right)}\nonumber.
\end{flalign}
 Define a non-negative bounded functions
 \begin{flalign*}
 &\scalemath{1}{G_{N}^{M}:=\min\bigg\{M, }{\tilde{c}\left(\omega_{0}, \zeta, u,\int_{\bf U}uQ_{N}(du \times {\bf{Y}}\times {\bf{S}}),\int_{{\bf{U}} \times {\bf{S}}}\Lambda Q_{N}(du \times {\bf{Y}}\times d\zeta)\right)\bigg\}},\\
  &\scalemath{1}{G^{M}:=\min\bigg\{M, }{\tilde{c}\left(\omega_{0}, \zeta, u,\int_{\bf U}uQ(du \times {\bf{Y}}\times {\bf{S}}),\int_{{\bf{U}} \times {\bf{S}}}\Lambda Q(du \times {\bf{Y}}\times d\zeta)\right)\bigg\}},
 \end{flalign*}
where the sequence $\{G_{M}\}_{M}$ converges as $M \to \infty$ to 
\begin{equation*}
\scalemath{1}{G:=\tilde{c}\left(\omega_{0}, \zeta, u,\int_{\bf U}uQ(du \times {\bf{Y}}\times {\bf{S}}),\int_{{\bf{U}} \times {\bf{S}}}\Lambda Q(du \times {\bf{Y}}\times d\zeta)\right)}.
\end{equation*}
We have ${P}$-almost surely
\begin{flalign}
\scalemath{1}{\liminf\limits_{N\rightarrow \infty}}&\scalemath{0.94}{{\int_{\cal{Z}}\tilde{c}\left(\omega_{0}, \zeta, u,\int_{\bf U}uQ_{N}(du \times {\bf{Y}}\times {\bf{S}}),\int_{{\bf{U}} \times {\bf{S}}}\Lambda Q_{N}(du \times {\bf{Y}}\times d\zeta)\right)}\scalemath{1}{Q_{N}(du,dy,d\zeta)}}\nonumber\\
&\scalemath{1}{ \geq \lim\limits_{M\rightarrow \infty}\liminf\limits_{N\rightarrow \infty}\int_{\cal{Z}}G_{N}^{M}Q_{N}(du,dy.d\zeta)}\label{eq:bd1}\\
&\scalemath{1}{= \lim\limits_{M\rightarrow \infty}\int_{\cal{Z}}G^{M}Q(du,dy,d\zeta)}\label{eq:bd2}\\
&\scalemath{1}{=\int_{\cal{Z}}\tilde{c}\left(\omega_{0},\zeta, u,\int_{\bf U}uQ(du \times {\bf{Y}}\times {\bf{S}}),\int_{{\bf{U}} \times {\bf{S}}}\Lambda Q(du \times {\bf{Y}}\times d\zeta)\right)}\scalemath{1}{Q(du,dy,d\zeta)}\label{eq:bd3},
\end{flalign}
where \eqref{eq:bd1} is true since
\begin{flalign*}
\tilde{c}\left(\omega_{0}, \zeta, u,\int_{\bf U}uQ_{N}(du \times {\bf{Y}}\times {\bf{S}}),\int_{{\bf{U}} \times {\bf{S}}}\Lambda Q_{N}(du \times {\bf{Y}}\times d\zeta)\right)\geq G^{M}_{N}.
\end{flalign*}
Equality \eqref{eq:bd2} follows from the generalized convergence theorem in \cite[Theorem 3.5]{serfozo1982convergence} since $G_{N}^{M}$ is bounded and continuously converges to $G^{M}$, i.e., ${P}$-almost surely
\begin{flalign}
\lim\limits_{N\rightarrow \infty}&\min\bigg\{M,\scalemath{1}{~~~\tilde{c}\left(\omega_{0},\zeta, u_{N},\int_{\bf U}uQ_{N}(du \times {\bf{Y}}\times {\bf{S}}),\int_{{\bf{U}} \times {\bf{S}}}\Lambda Q_{N}(du \times {\bf{Y}}\times d\zeta)\right)\bigg\}}\nonumber\\
&\scalemath{1}{=\min\bigg\{M,}\scalemath{1}{~~~\tilde{c}\left(\omega_{0},\zeta, u,\int_{\bf U}uQ(du \times {\bf{Y}}\times {\bf{S}}),\int_{{\bf{U}} \times {\bf{S}}}\Lambda Q(du \times {\bf{Y}}\times d\zeta)\right)\bigg\}}\label{eq:contconv},
\end{flalign}
when $u_{N} \to u$ as $n \to \infty$. The monotone convergence theorem implies \eqref{eq:bd3}. 
Hence, \eqref{eq:2121} holds which implies $\limsup\limits_{N\rightarrow \infty}J_{T}^{N}(\underline{\tilde{\gamma}}_{T}^{*,N})=J_{T}^{\infty}(\underline{\tilde{\gamma}}_{T}^{*,\infty})$, and this completes the proof following from \cite[Theorem 5]{sanjari2018optimal}. Here, for completeness we present the proof which is similar to the analysis of the proof \cite[Theorem 5]{sanjari2018optimal} for dynamic teams,
\begin{flalign}
\inf\limits_{\underline{\gamma}_{T}}J^{\infty}_{T}(\underline{\gamma}_{T})&\leq \limsup\limits_{N\rightarrow \infty} J^{N}_{T}(\underline{\tilde{\gamma}}^{*,\infty}_{T})\nonumber\\
&=\limsup\limits_{N\rightarrow \infty}J_{T}^{N}(\underline{\gamma}_{T}^{*,N}) =\limsup\limits_{N\rightarrow \infty}\inf\limits_{\underline{\gamma}_{T}^{1:N}}J_{T}^{N}(\underline{\gamma}_{T}^{1:N})\nonumber\\
&= \limsup\limits_{N\rightarrow \infty}\inf\limits_{\underline{\gamma}_{T}}J_{T}^{N}(\underline{\gamma}_{T})\label{eq:34q}\\
&\leq \inf\limits_{\underline{\gamma}_{T}}\limsup\limits_{N\rightarrow \infty}J_{T}^{N}(\underline{\gamma}_{T}) =\inf\limits_{\underline{\gamma}_{T}}J^{\infty}(\underline{\gamma}_{T})\nonumber,
\end{flalign}
\end{itemize}
where \eqref{eq:34q} is true since the restriction $\underline{\gamma}_{T}$ to the first $N$ components is $\underline{\gamma}_{T}^{1:N}$. This implies that $\underline{\tilde{\gamma}}_{T}^{*,\infty}$ is globally optimal. 
\end{proof}
{\begin{remark}\label{rem:8}
On the connection between finitely many DMs and infinitely many DMs, we note a closely related work on mean-field games by Fischer \cite{fischer2017connection} where the information structure is assumed to be static since the policy of each player is assumed to be adapted to the filtration generated by his/her initial states and Wiener process (also called in the mean-field games' literature, somewhat non-standard in the control literature, as \textit{open-loop distributed} controllers \cite{lacker2016general},\cite[pages 72-76]{carmona2016mean}). This means that the information of each DM is not affected by any of the actions of the other DMs. For dynamic teams, there are two difficulties: (1) obtaining variational equations is challenging since fixing policies of DMs and perturbing only DM's policies, perturbs the observation of other DMs and hence the controls $u^{-i*}=(\gamma^{1}(y^{1}),\dots,\gamma^{i-1*}(y^{i-1}),\gamma^{i+1*}(y^{i+1}),\dots,\gamma^{N*}(y^{N}))$; (2) solutions of variational equations which give person-by-person optimal policies are inconclusive for global optimality due to the lack of convexity in general.
\end{remark}
\begin{remark}\label{rem:9}
We also note additional related works by Lacker \cite{lacker2017limit, lacker2018convergence} where either convergence of open-loop controllers, or convergence of Nash equilibria induced by closed-loop controllers (where controls are measurable path-dependent functions of states, $u_{t}^{i}=\phi(t,x_{0:t})$, where $x_{0:t}=(x_{0:t}^{1},\dots,x_{0:t}^{N})$ and $\phi$ is a measurable function) or Markovian controllers ($u_{t}^{i}=\phi(t,x_{t})$, where $x_{t}=(x_{t}^{1},\dots,x_{t}^{N})$) have been considered. In \cite{lacker2018convergence}, the information structure is classical (a centralized problem) since players have access to all the information available to previous DMs).
\end{remark}}
\begin{remark}
In Lemma \ref{lem:mflem} and Theorem \ref{the:mft}, we considered a non-classical information structure for teams defined as  ($\mathcal{P}_{T}^{\infty,\text{MF}}$) with a convex expected cost in policies. For teams defined as  ($\mathcal{P}_{T}^{\infty,\text{MF}}$) with a symmetric partially nested information structure which admit static reduction, the above result holds and similar to the proof of Theorem \ref{the:mft}, it can be proven under the assumption that the cost functions is convex in actions (since convexity of the cost function in actions is a sufficient condition for convexity of the expected cost function in policies for this class of problems \cite[Theorem 3.7]{YukselSaldiSICON17}).
\end{remark}
\begin{remark}\label{rem:mft} 
Assumptions that action spaces are compact and $f_{t}$s are bounded can be relaxed by assuming that
\begin{itemize}
\item [(A1)]$\sup\limits_{N\geq 1} E[|\underline{\gamma}^{*,N}_{T}(\underline{y}^{1})|^{1+\delta}]<\infty$  for some $\delta >0$,
\item [(A2)]$\sup\limits_{N\geq 1} E[|\Lambda(\underline{\gamma}^{*,N}_{T}(\underline{y}^{1}),\underline{\zeta}^{1})|^{1+\tilde{\delta}}]<\infty$  for some $\tilde{\delta} >0$.
\end{itemize}
That is because, following from the pointwise convergence of $\underline{\gamma}^{*,N}_{T}$ and continuity of $\Lambda$ in actions, the above uniform integrability assumption justifies exchanging the limit and the expectation required to establish the convergence in \eqref{eq:ucon} and \eqref{eq:q2} using a similar analysis of \eqref{eq:y199.5.2} and an argument of \eqref{eq:stlaw} based on the strong law of large numbers. This result is particularly important for LQG models (we use this remark in Section \ref{sec:LQG}).
\end{remark}

\begin{theorem}\label{the:mftergodic}
 Consider a team defined as  ($\mathcal{P}_{T}^{\infty,\text{MF}}$)  with ($\mathcal{P}_{T}^{N,\text{MF}}$) having a symmetric information structure for every $N$. Assume for every $N$ the team problem is convex in policies. Let the action space be compact and convex for each DM, and assume Assumption \ref{assump:c}, Assumption \ref{assump:2}(ii), and Assumption \ref{assump:ergodic} hold. If there exists a sequence of optimal policies for ($\mathcal{P}_{T}^{N,MF}$), $\{\underline{\gamma}^{*,N}_{T}\}_N$, which converges (for every DM due to the symmetry) pointwise to $\underline{\gamma}^{*,\infty}_{T}$ as $N \to \infty$, then $\underline{\gamma}^{*,\infty}_{T}$  (which is identically symmetric) is an optimal policy for ($\mathcal{P}_{T}^{\infty,MF}$).
\end{theorem}
\begin{proof}
Under Assumption  \ref{assump:2}(ii) and Assumption \ref{assump:ergodic}, for every $A^{i} \in \mathcal{B}({\bf{S}})$, and $A^{i}=B^{i}\times\prod_{t=0}^{T-1}(D_{t}^{i}\times E_{t}^{i})$ (where $B^{i} \in \mathcal{B}(\mathbb{X})$, $D_{t}^{i} \in \mathcal{B}(\mathbb{W})$, and $E_{t}^{i} \in \mathcal{B}(\mathbb{V})$), for all $N\in \mathbb{N}$, and permutations $\sigma$, we have $P$-almost surely
\begin{flalign}
\scalemath{1}{P(\underline{\zeta}^{1}\in A^{1},}&\scalemath{0.94}{\dots, \underline{\zeta}^{N}\in A^{N}| \omega_{0})}\nonumber\\
&\scalemath{1}{=P(x_{0}^{1}\in B^{1},\dots, x_{0}^{N}\in B^{N}| \omega_{0})\prod_{i=1}^{N}\prod_{t=0}^{T-1}P(w_{t}^{i}\in D_{t}^{i})P(v_{t}^{i}\in D_{t}^{i})}\label{eq:0ex1}\\
&\scalemath{1}{=P((x_{0}^{\sigma})^{1}\in B^{1},\dots, (x_{0}^{\sigma})^{N}\in B^{N}| \omega_{0})\prod_{i=1}^{N}\prod_{t=0}^{T-1}P((w_{t}^{\sigma})^{i}\in D_{t}^{i})P((v_{t}^{\sigma})^{i}\in D_{t}^{i})}\label{eq:0ex2}\\
&\scalemath{1}{=P((\underline{\zeta}^{\sigma})^{1}\in A^{1},\dots, (\underline{\zeta}^{\sigma})^{N}\in A^{N}| \omega_{0})}\nonumber,
\end{flalign}
where \eqref{eq:0ex1} follows from Assumption  \ref{assump:2}(ii), and \eqref{eq:0ex2} follows from Assumption \ref{assump:ergodic}. Hence, $(\underline{\zeta}^{1},\underline{\zeta}^{2},\dots)$ are exchangeable conditioned on $\omega_{0}$.

Hence, following from a similar argument as the proof of Theorem \ref{the:3.1} (by considering $\omega_{0}$ in the cost function and the law of total expectation (by first conditioning on $\omega_{0}$), under Assumption \ref{assump:2}(ii) and Assumption \ref{assump:ergodic}, one can consider a sequence of $N$-DM teams which are symmetrically optimal that defines ($\mathcal{P}_{T}^{N,\text{MF}}$) and whose limit is identified with $(\mathcal{P}_{T}^{\infty,\text{MF}})$. Since initial states are not necessarily independent conditioned on $\omega_{0}$, we can not establish that $(\underline{\gamma}^{*,\infty}_{T}(\underline{y}^{i}),\underline{y}^{i},\underline{\zeta}^{i})$ are i.i.d. random vectors conditioned on $\omega_{0}$ which has been used in \eqref{eq:stlaw} in (Step 1) of the proof of Theorem \ref{the:mft} to show that $Q_{N}$ converges weakly to $Q$ ${P}$-almost surely.

However, we note that since $(\underline{\zeta}^{1},\underline{\zeta}^{2},\dots)$ are exchangeable conditioned on $\omega_{0}$, for every $A^{i} \in \mathcal{B}({\bf{S}})$ and $C \in \mathcal{B}(\Omega_{0})$, and for all $N\in \mathbb{N}$, and permutations $\sigma$, we have
\begin{flalign}
{P(\underline{\zeta}^{1}\in A^{1},}&\scalemath{0.94}{\dots, \underline{\zeta}^{N}\in A^{N}, \omega_{0}\in C)}=\scalemath{0.94}{P((\underline{\zeta}^{\sigma})^{1}\in A^{1},}{\dots, (\underline{\zeta}^{\sigma})^{N}\in A^{N}, \omega_{0}\in C)}\label{eq:ome}.
\end{flalign}
Let $\alpha^{i}:=(\omega_{0},\underline{\zeta}^{i})$. Hence,  \eqref{eq:ome} implies that $(\alpha^{1},\alpha^{2},\dots)$ is exchangeable. Following from \cite[Proposition 3.8(a)]{aldous2006ecole}, there exists a random vector ${z}\in [0,1]$ such that, $({\underline{\zeta}}^{1},{\underline{\zeta}}^{2},\dots)$ are i.i.d. random vectors conditioned on $(\omega_{0},{z})$.

 Let $\tilde{\omega}_{0}:=(\omega_{0}, z)$. Hence, under Assumption  \ref{assump:2}(ii) and Assumption \ref{assump:ergodic}, conditioned on $\tilde{\omega}_{0}$, $(\underline{\zeta}^{1},\underline{\zeta}^{2},\dots)$ are i.i.d. random vectors. Following from standard stochastic realization results \cite[Lemma 3.1]{BorkarRealization}, we can represent any stochastic kernel in a functional form, with almost sure equivalence, $\underline{\zeta}^{i}=g(\tilde{\omega}_{0}, \theta^{i})$ for some independent $\theta^{i}$ and measurable $g$ (note that following from exchangeability, $g$ is identical for all $i\in \mathbb{N}$ and $(\theta^{1},\theta^{2},\dots)$ are i.i.d. random vectors).

Since conditioned on $\tilde{\omega}_{0}$, $(\underline{\zeta}^{1},\underline{\zeta}^{2},\dots)$ are i.i.d. random vectors, $(\underline{\gamma}^{*,\infty}_{T}(\underline{y}^{i}),\underline{y}^{i},\underline{\zeta}^{i})$ are i.i.d. random vectors conditioned on $\tilde{\omega}_{0}$, hence  for every $\tilde{\omega}_{0}$ on a set of $P$-measure one, we have for every continuous and bounded function $g$ in actions and observations, by the strong law of large numbers,
\begin{flalign*}
\scalemath{0.94}{P\left(\bigg\{\omega\in \Omega\bigg|\lim\limits_{N\rightarrow \infty}\left|\frac{1}{N}\sum_{i=1}^{N}g\bigg(\underline{\gamma}^{*,\infty}_{T}(\underline{y}^{i}),\underline{y}^{i},\underline{\zeta}^{i}\bigg)- E\bigg(g(\underline{\gamma}^{*,\infty}_{T}(\underline{y}^{1}),\underline{y}^{1},\underline{\zeta}^{1})\bigg|\tilde{\omega}_{0}\bigg)\right|=0\bigg\}\middle| \tilde{\omega}_{0}\right)=1}.
\end{flalign*}
Hence, following from an identical analysis as that of (Step 1) of the proof of Theorem \ref{the:mft}, conditioned on $\tilde{\omega}_{0}$, $Q_{N}$ converges weakly to $Q$, for every $\tilde{\omega}_{0}$ on a set of ${P}$-measure one. 

Following from the representation $\underline{\zeta}^{i}=g(\tilde{\omega}_{0}, \theta^{i})$, we have
\begin{flalign}
&\scalemath{1}{\limsup\limits_{N\rightarrow \infty} E\bigg[ E\bigg[}\scalemath{1}{\int_{\cal{Z}}\tilde{c}\left(\omega_{0},\zeta, u,\int_{\bf U}uQ_{N}(du \times {\bf{Y}}\times {\bf{S}}),\int_{{\bf{U}} \times {\bf{S}}}\Lambda Q_{N}(du \times {\bf{Y}}\times d\zeta)\right)}\nonumber\\
&~~~~~\:\:\:\:\:\:\:\:\:\:\:\:\:\:\:\scalemath{1}{\times Q_{N}(du,dy,d\zeta)\bigg|\omega_{0}\bigg]\bigg]}\nonumber\\
&\scalemath{1}{= \limsup\limits_{N\rightarrow \infty}\int_{{\Omega}_{0}\times [0,1]} E\bigg[}\scalemath{1}{\int_{\cal{Z}}\tilde{c}\left(\tilde{\omega}_{0},\zeta, u,\int_{\bf U}uQ_{N}(du \times {\bf{Y}}\times {\bf{S}}),\int_{{\bf{U}} \times {\bf{S}}}\Lambda Q_{N}(du \times {\bf{Y}}\times d\zeta)\right)}\label{eq:0ex31y}\\
&~~~~~\:\:\:\:\:\:\:\:\:\:\:\:\:\:\:\scalemath{1}{\times Q_{N}(du,dy,d\zeta)\bigg|\tilde{\omega}_{0}\bigg]P(d\tilde{\omega}_{0})}\nonumber,
\end{flalign}
where \eqref{eq:0ex31y} follows from the fact that for all $N \in \mathbb{N}$, and for every $A^{i} \in \mathcal{B}({\bf{S}})$
\begin{flalign*}
\scalemath{1}{\int _{\Omega_{0}}P(\underline{\zeta}^{1}\in A^{1}, \dots, \underline{\zeta}^{N}\in A^{N}|\omega_{0})P(d\omega_{0})=\int_{\Omega_{0}\times [0,1]}\prod_{i=1}^{N} \eta(\underline{\zeta}^{i}\in A^{i}|z, \omega_{0})P(dz,d\omega_{0})},
\end{flalign*}
and with slightly abuse of notations we use the same notation, $\tilde{c}$, for the cost function after transformation in \eqref{eq:0ex31y}. The rest of the proof is identical to that of Theorem \ref{the:mft}.
\end{proof}

\subsection{An existence theorem on globally optimal policies for dynamic mean-field team problems with a symmetric information structure}
An implication of Theorem \ref{the:mft} is the following existence result on globally optimal policies for mean-field team problems. In particular, we will establish the existence of a converging subsequence, in an appropriate sense, for a sequence of optimal policies for $N$-DM teams with an increasing number of DMs. For the following theorem, we do not establish the pointwise convergence; but by Theorem \ref{the:mft}, if a sequence of optimal policies for ($\mathcal{P}_{T}^{N,MF}$), $\{\underline{\gamma}^{*,N}_{T}\}_N$, converges pointwise, a global optimal policy exists. To this end, we allow decision makers to apply randomized policies.  For each decision maker (DM$^{i}$ for $i\in \mathbb{N}$), a probability measure $P \in \mathcal{P}(\Omega_{0}\times \mathbb{X} \times \prod_{t=0}^{T-1} (\mathbb{W}\times \mathbb{V})\times \prod_{t=0}^{T-1}(\mathbb{U}\times \mathbb{Y}))$ is a policy induced by a randomized policy if and only if for every $t=0,\dots,T-1$ and for all continuous and bounded function $g$
\begin{flalign}
&{\int g(\omega_{0}, x_{0}^{i}, \zeta_{0:t-1}^{i}, y_{0:t}^{i},u_{0:t}^{i})P(d\omega_{0}, dx_{0}^{i}, d\zeta_{0:t-1}^{i}, dy^{i}_{0:t},du^{i}_{0:t})}\nonumber\\
  &{=\int g(\omega_{0}, x_{0}^{i}, \zeta_{0:t-1}^{i},y_{0:t}^{i},u_{0:t}^{i})\mu^{i}(dx_{0}^{i}, d\zeta_{0:t-1}^{i}|\omega_{0})P(d\omega_{0})}\label{eq:eq18}\\
  &\:\:\:\:\:\:\ {\times \prod_{k=0}^{t} \Pi^{i}_{k}(du^{i}_{k}|y^{i}_{k})p_{k}^{i}(dy^{i}_{k}|\omega_{0}, x_{0}^{i}, \zeta_{0:k-1}^{i}, y^{i}_{0:k-1}, u_{0:k-1}^{i})}\nonumber,
\end{flalign}
for a stochastic kernel $\Pi^{i}_{k}$ on $\mathbb{U}$ given $\mathbb{Y}$, where $p_{k}^{i}$ is the transition kernel characterizing the observations of DM$^{i}$ at time $t$,
\begin{flalign*}
p_{k}^{i}&\left(y^{i}_{k}\in \cdot\middle|\omega_{0}, x_{0}^{i}, \zeta_{0:k-1}^{i}, y^{i}_{0:k-1}, u_{0:k-1}^{i}\right)\\
&:=P\left(h_{k}(x_{0:k}^{i},u_{0:k-1}^{i},v_{0:k}^{i}) \in \cdot\middle|\omega_{0}, x_{0}^{i}, \zeta_{0:k-1}^{i}, y^{i}_{0:k-1}, u_{0:k-1}^{i}\right),
\end{flalign*}
and $\mu^{i}$ is a fixed probability measure on initial states and disturbances of DM$^{i}$ conditioned on $\omega_{0}$. This equivalency follows from the fact that continuous and bounded functions form a separating class \cite[page 12]{Billingsley} and \cite[Theorem 2.2]{yukselSICON2017}. 

First, we present an absolute continuity assumption on observations of DMs.
{\begin{assumption}\label{assump:a4.2}
For every DM$^{i}$ and $t=0,\dots,T-1$, there exists a function $\psi_{t}^{i}:\mathbb{Y}\times \Omega_{0}\times \mathbb{X}\times \prod_{k=0}^{t-1}(\mathbb{W}\times \mathbb{V})\times\prod_{k=0}^{t-1}(\mathbb{Y}\times\mathbb{U})\to \mathbb{R}_{+}$ continuous in actions, and a probability measure $\nu_{t}^{i}$ on $\mathbb{Y}$ such that  for all Borel sets $A=A^{1}\times \dots \times A^{N}$,
\begin{flalign*}
P(y_{t}^{1:N}\in A&|\omega_{0}, x_{0}^{1:N}, \zeta_{0:t-1}^{1:N}, y^{1:N}_{0:t-1}, u_{0:t-1}^{1:N})\\
&{=\prod_{i=1}^{N}\int_{A^i}\psi_{t}^{i}(y^{i}_{t}, \omega_{0}, x_{0}^{i},\zeta_{0:t-1}^{i},y_{0:t-1}^{i}, u_{0:t-1}^{i})\nu_{t}^{i}(dy_{t}^{i})}.
\end{flalign*}
\end{assumption}}

This assumption allows us to obtain an independent measurements reduction (see \cite[Section 2.2]{yuksel2018general}). For example, if $v_{t}^{i}$ for all $i \in \mathbb{N}$ and $t=0,\dots,T-1$ are i.i.d with a probability measure admitting a density function so that the observation of each DM$^{i}$ at time $t$ is
$y_{t}^{i}=\tilde{h}_{t}(x_{t}^{i},u_{\downarrow t}^{i})+v_{t}^{i}$,
where $\tilde{h}_{t}$ is continuous, then Assumption \ref{assump:a4.2} holds \cite[Lemma 5.1]{gupta2014existence}.

\begin{theorem}\label{the:exmft}
Consider a team defined as  ($\mathcal{P}_{T}^{\infty,\text{MF}}$) with ($\mathcal{P}_{T}^{N,\text{MF}}$) having a symmetric information structure for every $N$. Assume for every $N$, the team problem is convex in policies and the action space is convex. Assume further that without any loss, the optimal policies can be restricted to those with $ E(\phi_{i}({u}^{i}))\leq K$ for some finite $K$, where $\phi_{i}:{\mathbb{U}} \to \mathbb{R}_{+}$ is lower semi-continuous (moment condition). Under Assumption \ref{assump:c} and Assumption \ref{assump:2} if either
\begin{itemize}
\item [(i)] Assumption \ref{assump:a4.2} holds (with no further assumptions on the information structure of each DM$^{i}$ for $i\in \mathbb{N}$ through time $t=0,\dots, T-1$), or
\item [(ii)] for each DM$^{i}$ for $i\in \mathbb{N}$ through time $t=0,\dots, T-1$, there exists a static reduction with the classical information structure (i.e., under a static reduction, the information structure is expanding such that $\sigma(y_{t}^{i}) \subset \sigma(y_{t+1}^{i})$ where $\sigma$ denotes the $\sigma$-field),
\end{itemize}
then there exists an optimal policy for ($\mathcal{P}_{T}^{\infty,MF}$).
\end{theorem}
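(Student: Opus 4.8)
The plan is to establish existence by working in the space of strategic measures (randomized policies), rather than searching for a pointwise limit of the deterministic $N$-DM optimal policies, since Theorem \ref{the:mft} already covers the latter scenario and the hypotheses here do not guarantee pointwise convergence. The overall strategy combines a static reduction of the dynamic information structure with a compactness argument for strategic measures in the $w$-$s$ topology, followed by a lower semicontinuity argument for the mean-field cost that reuses the machinery of the proof of Theorem \ref{the:mft}.

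First I would reduce each per-DM dynamic decision problem to a static one. Under hypothesis (i), Assumption \ref{assump:a4.2} supplies absolute continuity of the observation kernels $p_t^i$ with respect to fixed reference measures $\nu_t^i$, and the classical independent-measurements reduction (as in \cite[Section 2.2]{yuksel2018general}) rewrites the cost by absorbing the Radon--Nikodym derivatives $\psi_t^i$ into the cost integrand while replacing the true observation kernels with the fixed $\nu_t^i$; this renders the observations exogenous (independent of actions) so that each DM's problem becomes static. Under hypothesis (ii) the static reduction with an expanding (classical, nested) information structure is assumed directly, so the observations are again exogenous. In either case the reduced cost $\tilde c$ stays nonnegative and lower semicontinuous in the actions, and convexity in policies is preserved. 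Next I would establish compactness of the set of symmetric single-DM strategic measures: using the $\sigma$-compactness ${\bf{U}}=\cup_l K_l$ together with the coercivity $\lim_{l\rightarrow\infty}\inf_{u^i\notin K_l}\phi_i(u^i)=\infty$ and the uniform bound $\mathbb{E}(\phi_i(\underline u^i))\le K$, the action marginals are tight; since the observation marginals are fixed after the reduction and the primitives $\underline\zeta^i$ have fixed marginals by Assumption \ref{assump:2}, the whole family of strategic measures is relatively compact in the $w$-$s$ topology. This yields, for each $N$, existence of an optimal $N$-DM policy by lower semicontinuity of the cost, and by Lemma \ref{lem:mflem} the optimizer may be taken symmetric, hence described by a single strategic measure $\Pi^{N}$.

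Then I would pass to the limit. By relative compactness a subsequence of $\{\Pi^{N}\}$ converges in the $w$-$s$ topology to a limit strategic measure $\Pi^{*}$, defining a symmetric randomized policy for $(\mathcal{P}_T^{\infty,\text{MF}})$. Mirroring (Step 1)--(Step 3) of the proof of Theorem \ref{the:mft}, I would form the empirical measures $Q_N^{\omega}$ induced by the symmetric policies and, using the conditional i.i.d.\ structure given $\omega_0$ (Assumption \ref{assump:2}) and the strong law of large numbers, show $Q_N^{\omega}$ converges $\mathbb{P}$-almost surely in the $w$-$s$ topology to the limit measure $Q^{\omega}$ determined by $\Pi^{*}$. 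The mean-field cost is then lower semicontinuous along this convergence: the averaged action and state arguments $\int_{{\bf{U}}}u\,Q_N^{\omega}(du\times{\bf{Y}}\times{\bf{S}})$ and $\int_{{\bf{Y}}\times{\bf{S}}}\Lambda\,Q_N^{\omega}({\bf{U}}\times dy\times d\zeta)$ converge by continuity of $\tilde c$, and the $\liminf$ inequalities are obtained via Fatou's lemma and the continuous-convergence/uniform-integrability argument of \cite[Theorem 3.5]{serfozo1982convergence}. The sandwiching argument at the end of the proof of Theorem \ref{the:mft} then shows that $\Pi^{*}$ attains $\inf_{\underline\gamma_T}J_T^{\infty}(\underline\gamma_T)$, giving the claimed existence.

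The hard part will be the compactness and lower semicontinuity in the $w$-$s$ topology once the action spaces are only $\sigma$-compact: the coercivity condition must be leveraged to produce genuine tightness of the strategic measures (not merely of one-dimensional marginals), and the mean-field averages must be controlled uniformly so that the continuous-convergence argument survives on each compact piece $K_l$ despite the non-compact ambient action set. Hypotheses (i) and (ii) differ only in how the static reduction is obtained; the subsequent compactness and semicontinuity steps, and the concluding sandwiching inequality, are common to both cases.
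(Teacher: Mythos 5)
Your overall strategy—strategic measures, compactness in the $w$-$s$ topology, empirical measures with the SLLN given $\omega_0$, Serfozo/Fatou lower semicontinuity, and the final sandwiching inequality—is the same route the paper takes. However, there is a genuine gap at the central step: you pass from \emph{relative compactness} of the family $\{\Pi^N\}$ to the assertion that a subsequential limit $\Pi^*$ is itself ``a limit strategic measure\ldots defining a symmetric randomized policy for $(\mathcal{P}_T^{\infty,\text{MF}})$.'' Relative compactness only places the limit in the closure of the set of admissible strategic measures; it does not guarantee the limit satisfies the measurability/conditional-independence constraints of \eqref{eq:eq18}, i.e., that in the limit $u_t^i$ still depends only on $y_t^i$ rather than on observations the DM does not have. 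This closedness is exactly what fails in general for decentralized information structures (the paper itself notes that even the set of deterministic strategic measures is not weakly closed), and it is the content of the paper's (Step 3), where conditions (i) and (ii) are invoked a \emph{second} time—via the lower semicontinuity of $\phi_i$ together with \cite[Theorem 5.2]{yuksel2018general} under (i), or \cite[Theorem 5.6]{yuksel2018general} under (ii)—to show each coordinate of the policy space is weakly closed, so that $Q^\omega$ is induced by an admissible randomized policy.

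Your proposal treats (i)/(ii) as serving only to produce a static reduction that makes observations exogenous, and then declares the remaining steps ``common to both cases.'' But even after such a reduction, for $T>1$ the limit of measures on $\bigl(\prod_t \mathbb{Y}\bigr)\times\bigl(\prod_t \mathbb{U}\bigr)$ with fixed observation marginals disintegrates only into a kernel $\Pi(du_{0:T-1}\mid y_{0:T-1})$, which permits $u_t$ to depend on the entire observation path; ruling this out requires either the expanding (classical) structure of condition (ii) or the independent-measurements factorization argument under condition (i). So the admissibility of the limit—not the tightness of marginals on a $\sigma$-compact $\bf{U}$, which you correctly handle via the coercivity of $\phi_i$—is the step your proof is missing, and without it the existence claim does not follow.
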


Since the space of policies that are deterministic (where $\Pi_{k}^{i}$ in \eqref{eq:eq18} are indicator functions) are not closed under the weak convergence topology (e.g., as an implication of \cite[Theorem 2.7]{YukselSaldiSICON17}), we allow for randomization in the policies and therefore the limit policy is not necessarily deterministic according to the above result; however, it is identical for each DM.
\begin{proof}
We use individually randomized policies and we show that for every sequence of $N$-DM optimal policies, there exists a subsequence which converges to an optimal independently randomized policy for the mean-field limit under an appropriate topology defined by the product topology where each coordinate is endowed with the weak convergence topology. In (Step 1), we show that for each finite $N$-DM team problem, optimal policies are deterministic and symmetric and we consider the independently randomized policies induced by such policies $\{P_{N}\}_{N}$ (where $P_{N} \in\mathcal{P}({\bf{Y}} \times {\bf{U}})$ for each DM satisfying \eqref{eq:eq18}), as our sequence to be studied. We also define the sequence of empirical measures  induced by these policies, $Q_{N}$, as \eqref{eq:q}.

In (Step 2), we show that for every sequence of policies satisfying a moment condition there exists a subsequence such that policies $\{P_{n}\}_{n}$ for each DM and a subsequence of empirical measures $\{Q_{n}\}_{n}$ induced by these policies (where $n \in \mathbb{I}$ is the index set of a convergent subsequence) converge weakly to a limit $P$-almost surely, that is, for a set of ${P}$-measure one, for every bounded function $g$ which is continuous in actions and observations and measurable in uncertainties,
\begin{flalign*}
{P\bigg(\bigg\{\omega\in \Omega\bigg|\lim\limits_{n\rightarrow \infty}\left(\int g dQ_{n}-\int g dQ\right)=0\bigg\}\bigg| \omega_{0}\bigg)=1}.
\end{flalign*}
To this end, we first show that for each DM a sequence $\{P_{n}\}_{n}$ is tight, then we show that the sequence of empirical measures $\{Q_{n}\}_{n}$ induced by these policies converges weakly to a limit $P$-almost surely.  

In (Step 3), we show that the set of policies for each DM is closed under the weak convergence topology, hence, the limit policy satisfies the required measurability/conditional independence constraints (that is, the limit policy satisfies \eqref{eq:eq18}). In (Step 4), we use the lower semi-continuity argument to show that the expected cost function under the induced limit policy is less than or equal to the expected cost achieved by the sequence of $N$-DM optimal policies. 
\begin{itemize}[wide]
\item [\textbf{(Step 1):}]
Under Assumption \ref{assump:c}, Assumption \ref{assump:c}(c) and Assumption \ref{assump:2}, and by condition (i) using \cite[Theorem 5.2]{yuksel2018general}, or condition (ii) using \cite[Theorem 5.6]{yuksel2018general}, there exists a deterministic optimal policy for each finite $N$-DM team problem.  
Action spaces are convex and the team problem is convex in policies, hence, using Lemma \ref{lem:mflem}, one can consider a sequence of $N$-DM teams which are symmetrically optimal that defines ($\mathcal{P}_{T}^{N,MF}$) and whose limit is identified with ($\mathcal{P}_{T}^{\infty,MF}$). Hence, for each $N$-DM team problem, we consider symmetric randomized optimal policies. 
\item [\textbf{(Step 2):}]
In the following, we first show that the set of policies $P_{N} \in\mathcal{P}({\bf{Y}} \times {\bf{U}})$ for each DM satisfying \eqref{eq:eq18} and the moment condition is tight, then, by symmetry, we show that $\{Q_{N}\}_{N}$ is induced by this set of policies is also tight. We use the fact that  conditioned on $\omega_{0}$,  $(\underline{\gamma}^{*,N}_{T}(\underline{y}^{i}),\underline{y}^{i},\underline{\zeta}^{i})$ are i.i.d. random vectors (this follows from symmetry of the information structure and Lemma \ref{lem:mflem} since every DM applies the identical policy $\underline{\gamma}_{T}^{*,N}$) and also since the space of control policies is tight under the weak convergence for each DM (see e.g., \cite[proof of Theorem 4.7]{yuksel2018general}). 

Since actions of DMs do not affect the observations of others, the policy spaces are decoupled from the actions of other decision makers. Since we can restrict the search for optimal policies over those satisfying the moment condition, the fact that $\nu \to \int \nu(dx)g(x)$ is lower semi-continuous for a continuous function $g$ \cite[proof of Theorem 4.7]{yuksel2018general} implies that the marginals on $\bf{U}$ satisfying the moment condition are tight under the weak convergence topology. Hence, the collection of all probability measures with these tight marginals is also tight (see e.g., \cite[Proof of Theorem 2.4]{yukselSICON2017}). This implies that the sequence of randomized policies satisfying the moment condition is tight.  

Since every DM applies an identical policy and since observations are conditionally i.i.d., a countably infinite product of space of policies of DMs is tight (where each coordinate is tight in the weak convergence topology). Hence,  there exists a subsequence of policies $\tilde{P}_{n}\in \mathcal{P}(\prod_{i}({\bf{Y}} \times {\bf{U}}))$ (as a product of policies of DMs) converges weakly  to a limit $\tilde{P}$ (each coordinate converges weakly)  ${P}$-almost surely. Furthermore, since every DM applies an identical policy, conditioned on $\omega_{0}$, actions induced by an identical randomized policies, observations and disturbances are i.i.d. through DMs. Hence, following from a similar argument as (Step 1) of the proof of Theorem \ref{the:mft}, a subsequence of empirical measures $\{Q_{n}\}_{n \in \mathbb{I}}$ converges ${P}$-almost surely to $Q$ in $w$-$s$ topology. We note that the convergence is under the weak convergence topology, but since $\underline{\zeta}^{i}$s are exogenous with a fixed marginal, the convergence is also in the $w$-$s$ topology.
\item [\textbf{(Step 3):}] 
In this step, we show that each coordinate of the space of policies (space of policies for each DM) is closed under the weak convergence topology. This in particular implies that the space of policies is closed under the product topology and using (Step 1), we can conclude that the space of control policies is compact under the product topology where each coordinate is weakly compact.

Assume $P_{n}$ is a policy for DM$^{i}$ induced by a randomized policy converging weakly to $P_\infty$. In fact, conditions (i) or (ii) leads to the closedness of the set of policies (see \eqref{eq:eq18}) induced by $P_{n}$.
If Assumption \ref{assump:a4.2} holds, then  by the discussion in the proof of \cite[Theorem 5.2]{yuksel2018general}, each coordinate of policy spaces corresponding to DM$^{i}$ acting through time is closed under the weak convergence topology. Also, if condition (ii) holds, then \cite[Theorem 5.6]{yuksel2018general} leads to the same conclusion. Hence, each coordinate of space of policies (corresponding to DM$^{i}$) is closed under the weak convergence topology (since each coordinate of the space of policies is a finite product of space of policies for each DM at time instances $t=0,\dots,T-1$). Hence, following from (Step 2), there exists a subsequence $\{Q_{n}\}_{n \in \mathbb{I}}$ converges weakly to $Q$ ${P}$-almost surely where $Q$ is induced by a randomized policy in the set of policies satisfying \eqref{eq:eq18} and the limit policy is admissible and satisfies the required measurability/conditional independence constraints.

{Let for every $t=0,\dots,T-1$, $P^{*, \omega_{0}}_{n}$ be a probability measure on actions, observations and uncertainties induced by optimal randomized policies for each DM (which is identical because of symmetry) for $N$-DM teams conditioned on $\omega_{0}$, i.e., a probability measure that satisfies
\begin{flalign}
&{\int g(\omega_{0}, x_{0}^{i}, \zeta_{0:t-1}^{i}, y_{0:t}^{i},u_{n, 0:t}^{i, *})P^{*, \omega_{0}}_{n}(dx_{0}^{i}, d\zeta_{0:t-1}^{i}, dy^{i}_{0:t},du^{i, *}_{n, 0:t})}\nonumber\\
  &{=\int g(\omega_{0}, x_{0}^{i}, \zeta_{0:t-1}^{i},y_{0:t}^{i},u_{n, 0:t}^{i, *})\mu^{i}(dx_{0}^{i}, d\zeta_{0:t-1}^{i}|\omega_{0})}\label{eq:eq18dyn}\\
  &\:\:\:\:\:\:{ \times \prod_{k=0}^{t} \Pi_{k}^{*, n}(du^{*, i}_{n, k}|y^{i}_{k})p_{k}(dy^{i}_{k}|\omega_{0}, x_{0}^{i}, \zeta_{0:k-1}^{i}, y^{i}_{0:k-1}, u_{n, 0:k-1}^{i, *})}\nonumber,
\end{flalign}
for all bounded functions $g$ which is continuous in actions and observations and measurable in other arguments. We denote $\underline{u}^{i,*}_{n}:=({u}^{i,*}_{n, 0},\dots,{u}^{i,*}_{n, T-1})$ as the action of DM$^{i}$ through time induced by $\Pi^{*,n}_{t}$. Similarly, we denote $P^{*, \omega_{0}}$ as a probability measure induced by the limit policy, i.e., a probability measure satisfying \eqref{eq:eq18dyn} induced by $\Pi_{k}^{*, \infty}$ where $\underline{u}^{i,*}_{\infty}:=({u}^{i,*}_{\infty, 0},\dots,{u}^{i,*}_{\infty, T-1})$ is the action of DM$^{i}$ through time induced by $\Pi_{k}^{*, \infty}$. }
\item [\textbf{(Step 4):}] 
Now, we show that the expected cost function under the limit randomized policy is less than or equal to the expected cost achieved by $\limsup\limits_{n\rightarrow \infty}J_{T}^{n}(\underline{\tilde{\gamma}}_{T}^{*,n})$. {Since the cost function is continuous in states and actions, under the reduction (conditions (i) or (ii)), we have ${P}$-almost surely
\begin{flalign}
\scalemath{1}{\frac{1}{N}\sum_{i=1}^{N}\sum_{t=0}^{T-1}}&\scalemath{1}{c\bigg(\omega_{0}, x_{t}^{i},u_{t}^{i},\frac{1}{N}\sum_{p=1}^{N}u_{t}^{p},\frac{1}{N}\sum_{p=1}^{N}x_{t}^{p}\bigg)\prod_{i=1}^{N}\prod_{t=0}^{T-1}{{\psi}_{t}}\bigg({y}^{i}_{t}, \omega_{0}, x_{0}^{i},{\zeta}^{i}_{0:t-1},{y}_{0:t-1}^{i},{u}_{0:t-1}^{i}\bigg)}\nonumber\\
&\scalemath{1}{=\frac{1}{N}\sum_{i=1}^{N}\bar{c}\bigg(\omega_{0},\underline{\zeta}^{i}, \underline{u}^{i},\frac{1}{N}\sum_{p=1}^{N}\underline{u}^{p},\frac{1}{N}\sum_{p=1}^{N}\Lambda(\underline{u}^{p},\underline{\zeta}^{p})\bigg)\prod_{i=1}^{N}{{\underline\psi}}\bigg(\underline{y}^{i}, \omega_{0},\underline{\zeta}^{i},\underline{u}^{i}\bigg)}\label{eq:star12di},
\end{flalign}
where \eqref{eq:star12di} is true following from \eqref{eq:mfobs} and Assumption \ref{assump:c} for some functions $\bar{c}:\Omega_{0}\times {\bf S} \times {\bf U} \times {\bf U} \times {\bf X} \to \mathbb{R}_{+}$  continuous in states and actions and a function $\Lambda: {\bf U} \times {\bf S} \to {\bf X}$ continuous in actions and 
\begin{flalign*}
&\scalemath{1}{\prod_{i=1}^{N}{{\underline\psi}}\bigg(\underline{y}^{i}, \omega_{0},\underline{\zeta}^{i},\underline{u}^{i}\bigg):=\prod_{i=1}^{N}\prod_{t=0}^{T-1}{{\psi}_{t}}\bigg({y}^{i}_{t}, \omega_{0}, x_{0}^{i},{\zeta}^{i}_{0:t-1},{y}_{0:t-1}^{i},{u}_{0:t-1}^{i}\bigg).}
\end{flalign*}} 
{We have
\begin{flalign}
&\scalemath{1}{\limsup\limits_{N\rightarrow \infty}\frac{1}{N}\sum_{i=1}^{N}\sum_{t=0}^{T-1} E^{\underline{\gamma}_{T}^{*,1:N}}\left[c\left(\omega_{0}, x_{t}^{i},u_{t}^{i},\frac{1}{N}\sum_{p=1}^{N}u_{t}^{p},\frac{1}{N}\sum_{p=1}^{N}x_{t}^{p}\right)\right]}\nonumber\\
&\scalemath{0.94}{ \geq  \lim\limits_{M\rightarrow \infty}\limsup\limits_{N\rightarrow \infty}\int\int_{\cal{Z}}\min\bigg\{M,} \scalemath{0.94}{\tilde{c}\left(\omega_{0},\zeta, u,\int_{\bf U}uQ_{N}(du \times {\bf{Y}}\times {\bf{S}}),\int_{{\bf{U}} \times {\bf{S}}}\Lambda Q_{N}(du \times {\bf{Y}}\times d\zeta)\right)\bigg\}}\label{eq:36.5.1}\\
&~~~~~\:\:\:\:\:\:\:\:\:\:\:\:\:\:\:\scalemath{0.94}{\times Q_{N}(du,dy,d\zeta)\prod_{i=1}^{\infty}P_{N}^{*,\omega_{0}}(d\underline{u}^{i, *}_{N},d\underline{y}^{i},d\underline{\zeta}^{i})\prod_{i=1}^{\infty}{{\underline\psi}}(\underline{y}^{i}, \omega_{0},\underline{\zeta}^{i},\underline{u}^{i,*}_{N}){P}(d\omega_{0})}\nonumber\\
&\scalemath{0.94}{\geq  \lim\limits_{M\rightarrow \infty}\lim\limits_{n\rightarrow \infty}\int\int_{\cal{Z}}\min\bigg\{M,}\scalemath{0.94}{\tilde{c}\left(\omega_{0},\zeta, u,\int_{\bf U}uQ_{n}(du \times {\bf{Y}}\times {\bf{S}}),\int_{{\bf{U}} \times {\bf{S}}}\Lambda Q_{n}(du \times {\bf{Y}}\times d\zeta)\right)\bigg\}}\label{eq:35.5.1}\\
&~~~~~\:\:\:\:\:\:\:\:\:\:\:\:\:\:\:\scalemath{0.94}{\times Q_{n}(du,dy,d\zeta)\prod_{i=1}^{\infty}P_{n}^{*,\omega_{0}}(d\underline{u}^{i, *}_{n},d\underline{y}^{i},d\underline{\zeta}^{i})\prod_{i=1}^{\infty}{{\underline\psi}}(\underline{y}^{i}, \omega_{0},\underline{\zeta}^{i},\underline{u}^{i,*}_{n}){P}(d\omega_{0})}\nonumber\\
&\scalemath{0.94}{=  \lim\limits_{M\rightarrow \infty}\int\lim\limits_{n\rightarrow \infty}\int_{\cal{Z}}\min\bigg\{M,}\scalemath{0.94}{\tilde{c}\left(\omega_{0},\zeta, u,\int_{\bf U}uQ_{n}(du \times {\bf{Y}}\times {\bf{S}}),\int_{{\bf{U}} \times {\bf{S}}}\Lambda Q_{n}(du \times {\bf{Y}}\times d\zeta)\right)\bigg\}}\label{eq:34.5.1}\\
&~~~~~\:\:\:\:\:\:\:\:\:\:\:\:\:\:\:\scalemath{0.94}{\times  Q_{n}(du,dy,d\zeta)\prod_{i=1}^{\infty}P_{n}^{*,\omega_{0}}(d\underline{u}^{i, *}_{n},d\underline{y}^{i},d\underline{\zeta}^{i})\prod_{i=1}^{\infty}{{\underline\psi}}(\underline{y}^{i}, \omega_{0},\underline{\zeta}^{i},\underline{u}^{i,*}_{n}){P}(d\omega_{0})}\nonumber\\
&\scalemath{1}{=\lim\limits_{M\rightarrow \infty}\int\int_{\cal{Z}}\min\bigg\{M,}{\tilde{c}\left(\omega_{0},\zeta, u,\int_{\bf U}uQ(du \times {\bf{Y}}\times {\bf{S}}),\int_{{\bf{U}} \times {\bf{S}}}\Lambda Q(du \times {\bf{Y}}\times d\zeta)\right)\bigg\}}\label{eq:33.5.1}\\
&~~~~~\:\:\:\:\:\:\:\:\:\:\:\:\:\:\:\scalemath{1}{\times Q(du,dy,d\zeta)\prod_{i=1}^{\infty}P^{*,\omega_{0}}(d\underline{u}^{i, *}_{\infty},d\underline{y}^{i},d\underline{\zeta}^{i})\prod_{i=1}^{\infty}{{\underline\psi}}(\underline{y}^{i}, \omega_{0},\underline{\zeta}^{i},\underline{u}^{i,*}_{\infty}){P}(d\omega_{0})}\nonumber\\
&\scalemath{1}{=\int\int_{\cal{Z}}\tilde{c}\left(\omega_{0},\zeta, u,\int_{\bf U}uQ(du \times {\bf{Y}}\times {\bf{S}}),\int_{{\bf{U}} \times {\bf{S}}}\Lambda Q(du \times {\bf{Y}}\times d\zeta)\right)}\label{eq:37.5.1}\\
&~~~~~\:\:\:\:\:\:\:\:\:\:\scalemath{1}{\times Q(du,dy,d\zeta)\prod_{i=1}^{\infty}P^{*,\omega_{0}}(d\underline{u}^{i, *}_{\infty},d\underline{y}^{i},d\underline{\zeta}^{i})\prod_{i=1}^{\infty}{{\underline\psi}}(\underline{y}^{i}, \omega_{0},\underline{\zeta}^{i},\underline{u}^{i,*}_{\infty}){P}(d\omega_{0})}\nonumber
\end{flalign}
}where \eqref{eq:36.5.1} follows from the definition of empirical measures and by integrating over the set $(\prod_{i=n_{l}+1}^{\infty}{\bf{Y}\times {\bf{S}}})$ and since ${P}$-almost surely
\begin{flalign*}
&\scalemath{1}{\min\bigg\{M,\tilde{c}\left(\omega_{0},\zeta, u,\int_{\bf U}uQ_{N}(du \times {\bf{Y}}\times {\bf{S}}),\int_{{\bf{U}} \times {\bf{S}}}\Lambda Q_{N}(du \times {\bf{Y}}\times d\zeta)\right)\bigg\}}\\
&\scalemath{1}{\leq \tilde{c}\left(\omega_{0},\zeta, u,\int_{\bf U}uQ_{N}(du \times {\bf{Y}}\times {\bf{S}}),\int_{{\bf{U}} \times {\bf{S}}}\Lambda Q_{N}(du \times {\bf{Y}}\times d\zeta)\right)}.
\end{flalign*}
Inequality \eqref{eq:35.5.1} is true since limsup is the greatest convergent subsequential limit for a bounded sequence and \eqref{eq:34.5.1} follows from the dominated convergence theorem. We note that $\{Q_{n}\}_{n}$ is induced by $\underline{u}^{i,*}_{n}$ for each DM. Since $\{Q_{n}\}_{n \in I}$ converges weakly to $Q$ ${P}$-almost surely, by the moment condition and Remark \ref{rem:mft}, a similar argument as (Step 3) of the proof of Theorem \ref{the:mft} implies that ${P}$-almost surely
\begin{flalign*}
&\scalemath{1}{\int_{\bf U}uQ_{n}(du \times {\bf{Y}} \times {\bf{S}}) \xrightarrow{{n \to \infty}} \int_{\bf U}uQ(du \times {\bf{Y}}\times {\bf{S}})}\\
&\scalemath{1}{\int_{{\bf{U}} \times {\bf{S}}}\Lambda Q_{n}(du \times {\bf{Y}}\times d\zeta) \xrightarrow{{n \to \infty}} \int_{{\bf{U}} \times {\bf{S}}}\Lambda Q(du \times {\bf{Y}}\times d\zeta)}
\end{flalign*}
Hence, \eqref{eq:33.5.1} follows from \cite[Theorem 3.5]{serfozo1982convergence} since
\begin{flalign*}
&\scalemath{1}{\min\bigg\{M,}\scalemath{0.94}{\tilde{c}\left(\omega_{0},\zeta, u,\int_{\bf U}uQ_{n}(du \times {\bf{Y}}\times {\bf{S}}),\int_{{\bf{U}} \times {\bf{S}}}\Lambda Q_{n}(du \times {\bf{Y}}\times d\zeta)\right)\bigg\}}
\end{flalign*}
is bounded and non-negative, and continuously converges in $u$ ${P}$-almost surely (see \eqref{eq:contconv}). That is because, conditioned on $\omega_{0}$, $\underline{y}^{i}$ are i.i.d. random vectors (thanks to the symmetry), the space of policies is compact under the product topology (with the weak convergence topology for each coordinate (for each DM)), $\prod_{i=1}^{\infty}{{\underline\psi}}(\underline{y}^{i}, \omega_{0},\underline{\zeta}^{i},\underline{u}^{i,*}_{n})$ converges in the product topology, and the cost function and $\underline\psi$ are continuous. Finally, \eqref{eq:37.5.1} follows from the monotone convergence theorem.
Hence, the proof is completed.
\end{itemize}

\end{proof}
\begin{remark}
For the existence result, to show that the set of policies induced by independently randomized policies for each DM through time $t=0,\dots, T-1$ (see \eqref{eq:eq18}) is closed under the weak convergence topology, we utilized the result in \cite[Section 5.2]{yuksel2018general} which are more general than those in \cite{gupta2014existence, YukselSaldiSICON17}. We note that the extension of the existence results in  \cite[Section 5.2]{yuksel2018general} to our setup is not immediate since the conclusion of (Step 3) can not be established rigorously without considering the technical steps involving infinite dimensions and limit arguments. 
\end{remark}

\section{Symmetric LQG dynamic teams}\label{sec:LQG}
In the section, we consider LQG setup where the results of Section \ref{sec:4} and Section \ref{sec:mft} can be applied. {We first consider $N$-DM LQG problems  where we use Theorem \ref{the:3.1} to show that the globally optimal policies are symmetric. Then, based on symmetry, we calculate $N$-DM optimal policies for such problems. Next, using Theorem  \ref{the:mft} and Theorem \ref{the:mftergodic}, we show the convergence of $N$-DM optimal policies to optimal policies of LQG mean-field teams with countably infinite number of DMs. Finally, we consider infinite horizon problems where we use symmetry and convergence results to obtain global optimal policies for such problems. }
\subsection{Symmetric partially nested LQG dynamic teams on a graph}\label{sec:3a}
In the following, we consider decentralized problems where Theorem \ref{the:3.1} can be utilized and the optimal policy can be obtained. First, we formulate LQG problems with a symmetric partially nested information structure.
Consider the following dynamics. Let $i=1,2$, and
\begin{equation}\label{eq:state}
x_{t+1}^{i}=Ax_{t}^{i}+Bu_{t}^{i}+w^{i}_{t}.
\end{equation}
\begin{itemize}[wide = 0pt]
\item[\textbf{Problem} \bf{($\mathcal{P}_{T}$)}:]
Consider the expected cost function of $(\underline{\gamma}_{T}^{1},\underline{\gamma}_{T}^{2})$ as 
\begin{flalign}\label{eq:1.1}
\scalemath{1}{J_{T}(\underline{\gamma}_{T}^{1},\underline{\gamma}_{T}^{2})= E^{(\underline{\gamma}_{T}^{1},\underline{\gamma}_{T}^{2})}\bigg[\frac{1}{T}\sum_{t=0}^{T-1}\sum_{i=1}^{2}(x_{t}^{i})^{T}Qx_{t}^{i}+(u_{t}^{i})^{T}Ru_{t}^{i}+(u_{t}^{1})^{T}\tilde Ru_{t}^{2}+(u_{t}^{2})^{T}\tilde Ru_{t}^{1}\bigg]},
\end{flalign}
where $\underline{\gamma}_{T}^{i}=(\gamma^{i}_{0:T-1})$, and $R,\tilde R>0$ and $Q \geq 0$. {Let
\begin{flalign}
\scalemath{1}{{y}_{t}^{i}=H_{t}\zeta^{i}_{0:t-1}+\sum_{j=0}^{t-1}D_{tj}u_{j}^{i}}\label{eq:observationlqg},
 \end{flalign}
 where ${\zeta}^{i}_{t}=(w_{t}^{i}, v_{t}^{i})$ with ${\zeta}^{i}_{0}=(x_{0}^{i},w_{0}^{i}, v_{0}^{i})$. Let $n, m, s \in \mathbb{N}$ and $\mathbb{X}=\mathbb{R}^{n}$, $\mathbb{Y}=\mathbb{R}^{s}$, $\mathbb{U}=\mathbb{R}^{m}$, $w_{t}^{i} \in \mathbb{R}^{n}$, $v_{t}^{i} \in \mathbb{R}^{n}$,$A \in \mathcal{M}_{n,n}$,   $B \in \mathcal{M}_{n,m}$, $R \in \mathcal{M}_{m,m}$, $Q \in \mathcal{M}_{n,n}$, $\tilde{R} \in \mathcal{M}_{m,m}$, $H_{t} \in \mathcal{M}_{s,n(2t+1)}$, and $D_{tj} \in \mathcal{M}_{s,m}$. Let the information structure of each DM be $I_{t}^{i}=\{{y}^{i}_{t}, {y}_{\downarrow t}^{i}\}$.}
\end{itemize}

In the following, we show that the above dynamic teams are symmetrically optimal under sufficient conditions on the observations and initial states. 
\begin{corollary}\label{lem:4.4.4.4}
For a fixed $T$, consider a finite horizon team problem defined above as ($\mathcal{P}_{T}$). If $x_{0}^{1}$ and $x_{0}^{2}$ are exchangeable zero mean Gaussian random vectors and $w^{i}_{t}$s and $v^{i}_{t}$s are i.i.d. Gaussian random vectors for $i=1,2$ and independent for all $t=0,\dots, T-1$ and also independent of initial states, then the dynamic team is symmetrically optimal.
\end{corollary}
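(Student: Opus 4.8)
The plan is to realize $(\mathcal{P}_{T})$ as a two-DM instance of the framework of Section \ref{sec:4} and to verify, one by one, the hypotheses of Theorem \ref{the:3.1}; in fact the cleanest route is to invoke Lemma \ref{lem:stareduction}, whose conclusion is exactly ``symmetrically optimal.'' I treat agent $i$ aggregated over $t=0,\dots,T-1$ as a single DM with uncertainty $\underline{\zeta}^{i}=(x_{0}^{i},w_{0:T-1}^{i})$ and observation map $y_{t}^{i}=H_{t}\underline{\zeta}^{i}+\sum_{j}D_{tj}u_{j}^{i}$. First I would check that the information structure is symmetric in the sense of Definition \ref{def:s}: the maps $h_{t}$ are given by the same matrices $H_{t},D_{tj}$ for $i=1,2$, and each $y_{t}^{i}$ depends only on agent $i$'s own uncertainty and own past actions. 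Moreover, since $I_{t}^{i}=\{y_{t}^{i},y_{\downarrow t}^{i}\}$ contains every earlier observation on which an influencing action was based, the structure is partially nested, so by \cite{HoChu} it admits a static reduction.

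Next I would dispatch the exchangeability hypotheses. Assumption \ref{assump:3.1} holds because the cost \eqref{eq:1.1} is invariant under the swap $\sigma:1\leftrightarrow 2$: the state and own-action penalties enter as symmetric sums, the coupling term $(u_{t}^{1})^{T}\tilde R u_{t}^{2}+(u_{t}^{2})^{T}\tilde R u_{t}^{1}$ is itself symmetric, and the common dynamics \eqref{eq:state} let me write $x_{t}^{i}$ as the same affine function of $(\underline{\zeta}^{i},u_{0:t-1}^{i})$ for both agents. Hypothesis (a) is immediate since the action space $\mathbb{U}$ is Euclidean and hence convex, and hypothesis (b) follows from the distributional assumptions: $x_{0}^{1},x_{0}^{2}$ are exchangeable and the $w_{t}^{i}$ are i.i.d.\ across $i$ and independent of the initial states, so the joint law of $(\underline{\zeta}^{1},\underline{\zeta}^{2})$ is invariant under interchange. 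Hypothesis (c), the factorization \eqref{eq:a}, is essentially automatic here: given $\underline{\zeta}^{i}$ and the conditioning observations $y_{\downarrow t}^{i}$ together with the fixed policies, every relevant past action $u_{j}^{i}$ is determined, hence $y_{t}^{i}$ is a deterministic (affine) quantity, so each conditional law is a Dirac measure depending only on agent-$i$ data and the joint conditional law factorizes across the two agents.

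The substantive step, and the one I expect to be the main obstacle, is establishing convexity of the team in policies, which is what upgrades exchangeability (Theorem \ref{the:3.1}(i)) to symmetric optimality (Theorem \ref{the:3.1}(ii), equivalently the conclusion of Lemma \ref{lem:stareduction}). Because the problem is partially nested and Gaussian, the static reduction yields a quadratic cost in the reduced actions, and I would appeal to Theorem \ref{the:cp}(iii) together with Radner's theorem \cite{Radner} to obtain convexity in policies. The delicate point is the coupling: joint convexity in $(\underline{u}^{1},\underline{u}^{2})$ requires the combined per-stage quadratic form, whose action block has $R$ on the diagonal and $\tilde R$ off the diagonal and to which the state penalty $Q\ge 0$ (propagated through the dynamics) contributes positive-semidefinitely, to be positive semidefinite. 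This is precisely where the positivity of $R$ and $\tilde R$ and the sign pattern of the cross term enter, and it is the one inequality I would verify carefully rather than wave through. Once joint convexity is in hand, Lemma \ref{lem:stareduction} applies directly: averaging the optimal policy over the permutation $\sigma$ produces an identically symmetric policy that performs at least as well, so the team is symmetrically optimal.
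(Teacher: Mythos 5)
Your proposal takes essentially the same route as the paper's proof: verify Assumption \ref{assump:3.1} and hypotheses (a)--(c) of Theorem \ref{the:3.1} (with condition (c) following because each agent's observations depend only on its own uncertainty and own past actions), obtain convexity in policies from the Ho--Chu static reduction of the partially nested LQG structure together with Theorem \ref{the:cp}(iii), and conclude symmetric optimality via Theorem \ref{the:3.1}(ii), equivalently Lemma \ref{lem:stareduction}. The one step you flag for careful verification --- that joint convexity in $(\underline{u}^{1},\underline{u}^{2})$ needs the coupled quadratic form with $R$ on the diagonal and $\tilde R$ off the diagonal to be positive semidefinite, which does not follow from $R,\tilde R>0$ alone --- is a genuine delicacy, but the paper's own proof simply asserts convexity at this point without checking it, so your attempt matches the paper's argument and is, if anything, more explicit about where its hypotheses are actually used.
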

\begin{proof}
Since the dynamic team is LQG with a partially nested information structure, a static reduction exists and the expected cost is convex in policies under static reductions (see \cite{HoChu} and Theorem \ref{the:cp}(iii)). Assumption \ref{assump:3.1} is satisfied following from \eqref{eq:1.1}. We need to show assumptions of Theorem \ref{the:3.1} hold. Following from the hypothesis on disturbances and initial states, Assumption (b) holds. Assumption (c) holds following from Assumption \ref{the:3.1} and since given $(x_{0}^{1}, x_{0}^{2})$, $(y_{0:T-1}^{1}, y_{0:T-1}^{2})$ are independent. Hence, Theorem \ref{the:3.1} completes the proof.
\end{proof}

Here, we consider a class of LQG dynamic teams with a tree information structure where we utilize Corollary \ref{lem:4.4.4.4} and we obtain an explicit recursion for the optimal policy.
\begin{itemize}[wide = 0pt]
\item[\textbf{Problem} \bf{($\mathcal{P}_{T}^{\text{tree}}$)}:]
Consider a finite horizon expected cost \eqref{eq:1.1} with $I_{t}^{i}=\{x_{[0:t]}^{i}, u_{[0:t-1]}^{i}\}$.
\end{itemize}

{We note that this problem is a special case of ($\mathcal{P}_{T}$) since we assumed that $y_{t}^{i}=x_{t}^{i}$ for all DM$^{i}$ and $t=0,\dots,T-1$ and also all DMs have a total recall ($I_{t}^{i}=\{x_{[0:t]}^{i}, u_{[0:t-1]}^{i}\}$). For this problem, we calculate an explicit recursion for optimal policies using the symmetry established in Corollary \ref{lem:4.4.4.4}.}

 \begin{theorem}\label{the:4.2}
 For a fixed $T$, consider a finite horizon team problem defined as ($\mathcal{P}_{T}^{\text{tree}}$). If ($x_{0}^{1},x_{0}^{2})$  are exchangeable with an identical zero mean Gaussian distribution and $w^{i}_{t}$s are i.i.d. zero mean Gaussian random vectors for $i=1,2$ and independent for all $t=0,\dots, T-1$ and independent of initial states, then
 \begin{flalign}
 &u^{1*,(T)}_{t}=K_{t}^{(T)}x_{t}^{1}+L_{t}^{(T)} E[x_{0}^{2}|x_{0}^{1}]\label{eq:4.9},\\
 &u^{2*,(T)}_{t}=K_{t}^{(T)}x_{t}^{2}+L_{t}^{(T)} E[x_{0}^{1}|x_{0}^{2}]\label{eq:4.10},
 \end{flalign}
 where
 \begin{flalign}
& K_{t}^{(T)}=-(R+B^{T}P_{t+1}^{(T)}B)^{-1}B^{T}P_{t+1}^{(T)}A\label{eq:3.4.15},\\
 &P_{t}^{(T)}=-A^{T}P_{t+1}^{(T)}B^{T}(R+B^{T}P_{t+1}^{(T)}B)^{-1}B^{T}P_{t+1}^{(T)}A+Q+A^{T}P_{t+1}^{(T)}A\label{eq:3.4.16},\\
 &L_{t}^{(T)}=-\bigg(R+B^{T}P_{t+1}^{(T)}B\bigg)^{-1}\bigg[\tilde{R}K_{t}^{(T)}G_{t}^{(T)}+\tilde{R}L_{t}^{(T)}\Sigma\label{eq:3.4.17}\\
 &~~~~\:\:\:\:\:\:\:\:\:\:\:\:+\sum_{s=t+1}^{T-1}B^{T}(A^{T})^{s-t}P_{s+1}^{(T)}BL_{s}^{(T)}\bigg]\nonumber,\\
& G_{t}^{(T)}=\prod_{s=0}^{t-1}(A+BK_{s}^{(T)})+\sum_{s=1}^{t}\prod_{j=s}^{t-1}(A+BK_{j}^{(T)})BL_{s-1}^{(T)}\Sigma\nonumber,
 \end{flalign}
 where $\Sigma= E[x_{0}^{1}(x_{0}^{2})^{T}]( E[x_{0}^{2}(x_{0}^{2})^{T}])^{-1}$, $P_{T}^{(T)}=0$, $G_{0}^{(T)}=I$. Moreover, the optimal cost is
 \begin{flalign}\label{eq:optcost}
J_{T}(\underline{\gamma}^{*}_{T})&=\frac{2}{T}\bigg\{ E\bigg[(x_{0}^{1})^{T}P_{0}^{(T)}x_{0}^{1}\bigg]+\sum_{t=0}^{T-1} E\bigg[(w_{t}^{1})^{T}P_{t}^{(T)}w_{t}^{1}\bigg]\\
&+\sum_{t=0}^{T-1} E\bigg[\bigg( E[x_{0}^{2}|x_{0}^{1}]\bigg)^{T}\bigg((L_{t}^{(T)})^{T}B^{T}P_{t+1}^{(T)}BL_{t}^{(T)}\bigg) E[x_{0}^{2}|x_{0}^{1}]\bigg]\nonumber\\
&+\sum_{t=1}^{T-1} E\bigg[(x_{0}^{1})^{T}(A^{T})^{t}P_{t+1}^{(T)}BL_{t}^{(T)} E[x_{0}^{2}|x_{0}^{1}]\bigg]\bigg\}.\nonumber
\end{flalign}
 \end{theorem}
 \begin{proof}
 Following from \cite{HoChu} and Radner's theorem \cite{Radner}, person-by-person optimality implies global optimality due to the uniqueness of the person-by-person optimal policy. That is because the information structure is partially nested, and LQG dynamic teams can be reduced to a static one using Ho-Chu's static reduction \cite{HoChu}. Hence, we only need to show that the policy satisfying \eqref{eq:4.9} and \eqref{eq:4.10} is person-by-person optimal. We show that for DM$^{1}$, $J(\underline\gamma^{*}_{T},\underline\gamma^{*}_{T})\leq J((\underline\gamma^{-t*}_{T},\beta),\underline\gamma^{*}_{T})$ for all $\beta \in \Gamma^{t}$ where $(\underline\gamma^{-t*}_{T},\beta)=(\gamma_{0:t-1}^{*},\beta,\gamma_{t+1:T-1}^{*})$. This implies that $(\underline\gamma^{*}_{T},\underline\gamma^{*}_{T})$ is person-by-person optimal thanks to Corollary \ref{lem:4.4.4.4} since the dynamic team is symmetrically optimal (by exchanging policies $(\underline\gamma^{-t*}_{T},\beta)$ with $\underline\gamma^{*}_{T}$ which implies $J(\underline\gamma^{*}_{T},\underline\gamma^{*}_{T})\leq J(\underline\gamma^{*}_{T},(\underline\gamma^{-t*}_{T},\beta))$ for all $\beta \in \Gamma^{t}$ and this implies that $(\underline\gamma^{*}_{T},\underline\gamma^{*}_{T})$ is the fixed point of the equation). The proof is completed by induction. Due to space constraints, we have removed the calculation. 
 \end{proof}
 
 \begin{remark}
 The optimal policies \eqref{eq:4.9} and \eqref{eq:4.10} contain two parts which can be interpreted as follows: the first part, $k_{t}^{(T)}x_{t}^{i}$, is equivalent to the optimal policy of the branch (DM) by ignoring the other branch in the optimization problem (in this case, this is equivalent to the centralized policies since the information structure of each branch (DM) is centralized). The second part corresponds to the correlation term between branches (DMs).
 \end{remark}

 In the following, we generalize the result of Theorem \ref{the:4.2} to $N$-DM LQG dynamic teams.
 Assume that the dynamics for $i=1,2,...,N$ are defined as \eqref{eq:state}.
\begin{itemize}[wide = 0pt]
\item[\textbf{Problem} \bf{($\mathcal{P}_{T}^{N, \text{tree}}$)}:]
Consider the expected cost function of $\underline{\gamma}_{T}^{1:N}$ as 
\begin{flalign}
\scalemath{1}{J_{T}^{N}(\underline{\gamma}_{T}^{1:N})=\frac{1}{T}\sum_{t=0}^{T-1}\sum_{i=1}^{N} E^{\underline{\gamma}_{T}^{1:N}}\bigg[(x_{t}^{i})^{T}Qx_{t}^{i}+(u_{t}^{i})^{T}Ru_{t}^{i} +\sum_{j=1,j\not=i}^{N}(u_{t}^{i})^{T}\tilde Ru_{t}^{j}+(u_{t}^{j})^{T}\tilde Ru_{t}^{i}\bigg]}\label{eq:4.12},
\end{flalign}
where $\underline{\gamma}_{T}^{i}=\gamma^{i}_{0:T-1}$ for $i=1,\dots,N$ and $R,\tilde R>0$ and $Q \geq 0$. Let $I_{t}^{i}=\{x_{[0:t]}^{i}, u_{[0:t-1]}^{i}\}$.
\end{itemize}
\begin{corollary}\label{the:3.10}
 For a fixed $T$ and $N$, consider a finite horizon team problem defined as ($\mathcal{P}_{T}^{N,\text{tree}}$). If $(x_{0}^{1:N})$ are exchangeable with an identical zero mean Gaussian distribution, and $w^{i}_{t}$s for $i=1,\dots,N$ are i.i.d. zero mean Gaussian random vectors, independent for $t=0,\dots,T-1$, and independent of initial states, then
 \begin{flalign*}
 &\scalemath{1}{u^{i*,(T),(N)}_{t}=K_{t}^{(T)}x_{t}^{i}+L_{t}^{(N),(T)}\sum_{j=1,j\not=i}^{N} E[x_{0}^{j}|x_{0}^{i}]},
 \end{flalign*}
 where $K_{t}^{(T)}$ and $P_{t}^{(T)}$ satisfy \eqref{eq:3.4.15} and \eqref{eq:3.4.16}, and $L_{t}^{(N),(T)}$ is a function of $K_{0:t}^{(T)}$. 
\end{corollary}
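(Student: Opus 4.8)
The plan is to follow exactly the route used for Theorem~\ref{the:4.2} in the two-DM case, now carried out for arbitrary $N$, reducing everything to a single-DM LQ problem by combining person-by-person optimality with symmetry. First I would record that $\mathcal{P}_T^{N,\text{tree}}$ is a partially nested LQG team: each DM observes its own full state and action history $I_t^i=\{x_{[0:t]}^i,u_{[0:t-1]}^i\}$, the dynamics \eqref{eq:state} are decoupled across DMs apart from the cross terms in the cost \eqref{eq:4.12}, and the noises $w_t^i$ are independent across $i$. Consequently a Ho--Chu static reduction exists and, by Theorem~\ref{the:cp}(iii), the statically reduced cost is jointly convex in the actions, so the team is convex in policies; Radner's theorem \cite{Radner} then guarantees that a person-by-person optimal policy is globally optimal and unique. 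Moreover the hypotheses of Theorem~\ref{the:3.1} hold for $N$ DMs: action spaces are convex, the initial states $x_0^{1:N}$ are exchangeable Gaussian, and each branch's state process is a function only of $\underline\zeta^i$ and DM$^i$'s own actions, so the conditional laws factorize and condition (c) is met. Hence the team is symmetrically optimal, and it suffices to exhibit a symmetric person-by-person optimal policy; symmetry then upgrades it to the global optimum as the unique fixed point, exactly as for Theorem~\ref{the:4.2}.

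Next I would reduce DM$^i$'s best-response problem to a standard LQ regulator. Fixing every other DM at the conjectured symmetric form $u_t^j=K_t^{(T)}x_t^j+L_t^{(N),(T)}\sum_{k\neq j}\mathbb{E}(x_0^k|x_0^j)$ and collecting all terms of \eqref{eq:4.12} that involve DM$^i$, the objective for DM$^i$ is a convex quadratic in $u_{0:T-1}^i$ built from the usual regulator cost $(x_t^i)^T Q x_t^i+(u_t^i)^T R u_t^i$ plus the bilinear cross terms $\sum_{j\neq i}(u_t^i)^T\tilde R u_t^j$ (and their transposes), which are \emph{linear} in $u_t^i$ once the other policies are frozen. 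Since DM$^i$'s information is centralized over its own branch, the minimization depends on the other DMs' actions only through their conditional expectation given $I_t^i$. Using Gaussianity and the independence of DM$^i$'s own noises from the other branches, $\mathbb{E}(x_t^j\mid I_t^i)=\mathbb{E}(x_t^j\mid x_0^i)$, and propagating the closed-loop dynamics of branch $j$ shows this conditional mean is affine in $\mathbb{E}(x_0^j|x_0^i)$; by exchangeability the aggregate contribution of all $j\neq i$ collapses into the single statistic $\sum_{j\neq i}\mathbb{E}(x_0^j|x_0^i)$, precisely the quantity in the claimed feedforward term.

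I would then solve the reduced problem by backward induction, mirroring \eqref{eq:3.4.15}--\eqref{eq:3.4.17}. The value function of DM$^i$ is quadratic-plus-affine in $x_t^i$; matching the quadratic part yields the Riccati recursion \eqref{eq:3.4.16} and the gain \eqref{eq:3.4.15}, both independent of $N$ because the $N$-dependent cross terms enter only linearly and therefore leave the feedback part unperturbed. Matching the affine part produces a linear recursion for the feedforward gain whose data are assembled from $K_{0:t}^{(T)}$ and the correlation/conditioning matrices, so $L_t^{(N),(T)}$ is a function of $K_{0:t}^{(T)}$ as asserted. Substituting $j=i$ back confirms that the symmetric ansatz reproduces itself, so the conjectured policy is the person-by-person optimal fixed point and hence, by the convexity and symmetry argument above, globally optimal.

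The main obstacle I anticipate is the self-consistency of the feedforward gain $L_t^{(N),(T)}$: DM$^i$'s best response is computed \emph{against} policies that already contain $L$, so one must verify that the induced affine recursion closes with the same $L$ on both sides rather than generating an asymmetric or higher-order correction. This requires checking that (i) the conditional-mean reduction $\mathbb{E}(x_t^j\mid I_t^i)$ stays affine in $\mathbb{E}(x_0^j|x_0^i)$ at every stage under the closed loop, and (ii) the cross-coupling through $\tilde R$, once summed over the $N-1$ neighbors, contributes a term proportional to $\sum_{j\neq i}\mathbb{E}(x_0^j|x_0^i)$ with an $N$-dependent but DM-independent coefficient, so that a single scalar recursion for $L_t^{(N),(T)}$ is indeed consistent. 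Everything else is the routine LQ algebra already performed for $N=2$ in Theorem~\ref{the:4.2}.
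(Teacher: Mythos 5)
Your proposal is correct and follows essentially the same route as the paper: the paper proves Corollary \ref{the:3.10} by pointing to the proof of Theorem \ref{the:4.2}, which likewise combines the Ho--Chu static reduction and Radner's theorem (person-by-person optimality plus uniqueness implies global optimality), uses the symmetric-optimality result (Corollary \ref{lem:4.4.4.4}, i.e.\ Theorem \ref{the:3.1}) to reduce to a single DM's best response against the symmetric ansatz, and completes the fixed-point verification by backward induction, with the explicit LQ calculation omitted. Your added detail on the conditional-mean reduction $\mathbb{E}(x_t^j\mid I_t^i)=\mathbb{E}(x_t^j\mid x_0^i)$ and the self-consistency of $L_t^{(N),(T)}$ is precisely the calculation the paper suppresses, so there is no substantive divergence.
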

\begin{proof}
The proof is similar to the one of Theorem \ref{the:4.2}.
\end{proof}

Now we consider more general setup where using Corollary \ref{lem:4.4.4.4}, we establish a structural result for the case where the information structure of each decision maker over time satisfies a structure which is identical for all DMs and is partially nested. An example of such a graph structure has been depicted in Fig. 2. 

{\begin{itemize}[wide = 0pt]
\item[\textbf{Problem} \bf{($\mathcal{P}_{T}^{N}$)}:]
Consider a finite horizon expected cost of $\underline{\gamma}_{T}^{1:N}$ as \eqref{eq:4.12} with the information structure $I_{t}^{i}=\{{y}^{i}_{t}, {y}_{\downarrow t}^{i}\}$ where ${y}^{i}_{t}$ is defined in \eqref{eq:observationlqg} and dynamics is defined in \eqref{eq:state}.
\end{itemize}}
\begin{theorem}
 For a fixed $T$ and $N$, consider a finite horizon team problem defined as ($\mathcal{P}_{T}^{N}$). If $(x_{0}^{1}, \dots, x_{0}^{N})$ are exchangeable with an identical zero mean Gaussian distribution and $w^{i}_{t}$s for $i=1,\dots, N$ are i.i.d. zero mean Gaussian random vectors, independent for all $t=0,\dots, T-1$  and independent of initial states, then
\begin{flalign*}
 &\scalemath{1}{u^{i*,(T),(N)}_{t}=K_{t}^{(T)}{y}_{t}^{i}+L_{t}^{(N),(T)}\sum_{j=1,j\not=i}^{N} E[x_{0}^{j}|y_{t}^{i}]},
 \end{flalign*}
 where $K_{t}^{(T)}$ are obtained by considering only one DM and ignoring other DMs.
\end{theorem}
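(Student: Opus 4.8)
The plan is to mirror the arguments already used for Theorem \ref{the:4.2} and Corollary \ref{the:3.10}, adapting them from the full-state-feedback setting to the partial-observation graph structure. First I would observe that, since the within-DM information graph is partially nested and identical across DMs, the overall $N$-DM team is partially nested; by \cite{HoChu} it therefore admits a static reduction whose quadratic cost is jointly convex in the actions (because $R,\tilde R>0$ and $Q\ge 0$), so by Theorem \ref{the:cp}(iii) the team is convex in policies. Convexity together with the LQG structure lets me invoke Radner's theorem \cite{Radner}: a person-by-person (equivalently, strategy-by-strategy in the sense of Definition \ref{def:gbg}) optimal policy is globally optimal and, moreover, unique. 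Consequently it suffices to exhibit one such policy and verify that it has the claimed form.

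Next I would shrink the search space using symmetry. The hypotheses (exchangeable zero-mean Gaussian initial states, i.i.d. zero-mean Gaussian $w_t^i$ independent across time and of the initial states, and identical/symmetric observation maps) are exactly those needed for Theorem \ref{the:3.1}, as used in Corollary \ref{lem:4.4.4.4}; hence the team is exchangeable and symmetrically optimal, so there is an optimal policy that is identical across DMs, and I may seek the fixed point of the person-by-person stationarity conditions within the class of identical linear laws.

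To pin down the form, I would decouple the problem into a ``self'' part and a ``coupling'' part. Dropping the cross terms $(u_t^i)^T\tilde R u_t^j$ leaves each DM facing a single-agent centralized LQG problem on its own sub-graph; this produces the feedback gains $K_t^{(T)}$ and the Riccati recursion \eqref{eq:3.4.15}--\eqref{eq:3.4.16}, and because the within-DM information is partially nested and Gaussian, the optimal single-agent response is linear in the current observation, $K_t^{(T)}y_t^i$. The cross terms couple DM$^i$ to the others only through $\mathbb{E}(u_t^j|I_t^i)$; fixing the remaining DMs at the identical candidate policy and using the joint Gaussianity propagated through identical linear laws, this conditional expectation collapses to a linear functional of $\mathbb{E}(y_0^j|y_0^i)$, which is itself linear in $y_0^i$ by the normal correlation theorem. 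Writing the stationarity condition of the static reduction then yields an affine policy whose correction term is $L_t^{(N),(T)}\sum_{j\ne i}\mathbb{E}(y_0^j|y_0^i)$, with $L_t^{(N),(T)}$ governed by a linear recursion in $K_{0:t}^{(T)}$ analogous to \eqref{eq:3.4.17}. I would close the argument by backward induction on $t$, exactly as in Theorem \ref{the:4.2}: assuming the claimed form at $t+1,\dots,T-1$, substitute it into the time-$t$ stationarity condition, collect the terms multiplying $y_t^i$ (giving $K_t^{(T)}$) and those multiplying $\sum_{j\ne i}\mathbb{E}(y_0^j|y_0^i)$ (defining $L_t^{(N),(T)}$), and verify the candidate is the unique fixed point; Radner uniqueness then upgrades it to global optimality.

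The hard part, and the only genuinely new point beyond Corollary \ref{the:3.10}, will be justifying that $y_t^i$ together with $\{\mathbb{E}(y_0^j|y_0^i)\}_{j\ne i}$ is a sufficient statistic for DM$^i$ at time $t$ — that is, that the within-DM observation history $y_{\downarrow t}^i$ may be compressed to the current observation for the self part, and that the coupling to all other DMs reduces to a single common correlation term. This requires carefully exploiting the partial nestedness of each sub-graph $G_p$ (so that the relevant own-past information is recoverable by DM$^i$) and the exchangeable-Gaussian structure (so that $\mathbb{E}(y_0^j|y_0^i)$ is the same linear function of $y_0^i$ for every $j\ne i$), ensuring the static reduction's stationarity condition depends on the other DMs' controls only through this aggregate. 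Once this sufficiency is established, the remaining computation is the routine linear-Gaussian algebra already carried out in Theorem \ref{the:4.2}.
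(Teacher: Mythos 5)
Your proposal takes essentially the same route as the paper: the paper's proof is a one-line reference stating that the argument is "similar to that of Theorem \ref{the:4.2}, using the results in \cite{HoChu} and Corollary \ref{lem:4.4.4.4}," i.e., Ho--Chu static reduction plus Radner's theorem (person-by-person optimality and uniqueness imply global optimality), symmetric optimality across DMs, and a backward-induction verification of the candidate linear policy — precisely the plan you lay out. Your additional attention to the sufficient-statistic issue (compressing $I_t^i$ to $y_t^i$ and the aggregate correlation term) is a detail the paper leaves implicit, and flagging it strengthens rather than departs from the paper's argument.
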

\begin{proof}
The proof is similar to that of Theorem \ref{the:4.2} by \cite{HoChu} and Corollary \ref{lem:4.4.4.4}.
\end{proof}
\begin{remark}
A related work is \cite{nayyar2014optimal}, where structural results for optimal policy have been obtained for finite horizon LQG problems on graphs. In our analysis above, the  structural result for the optimal policy is obtained without assuming that decision makers who have no common ancestors and no common descendants have either uncorrelated noise or are decoupled through the cost function. Instead, exchangeable partially nested LQG teams with correlated initial states and disturbances are considered.  Moreover, here, the graph structures may not be trees in general, as opposed to \cite{nayyar2014optimal} where a multi-tree structure has been imposed on a graph.
\end{remark}

{Now, we present results for LQG teams with a mean-field coupling through the cost function. First, using Corollary \ref{the:3.10}, we obtain globally optimal policies for $N$-DM teams with a mean-field coupling and correlated initial states and disturbances. Next, as an implication of Theorem \ref{the:mft}, we show the convergence of optimal policies for LQG $N$-DM mean-field teams on a tree to the corresponding optimal policy of mean-field teams.}
 Let $I_{t}^{i}=\{x_{[0:t]}^{i}, u_{[0:t-1]}^{i}\}$  for $i \in \mathbb{N}$, and dynamics be as \eqref{eq:state}.
\begin{itemize}[wide = 0pt]
\item[\textbf{Problem} \bf{($\mathcal{P}_{T,\text{LQG}}^{N, \text{MF}}$)}:] Consider the expected cost function of $\underline{\gamma}_{T}^{N}$ as 
\begin{flalign}\label{eq:4.4.12}
&\scalemath{1}{J_{T}^{N}(\underline{\gamma}_{T}^{N})=\frac{1}{T}\sum_{t=0}^{T-1}\sum_{i=1}^{N} E^{\underline{\gamma}_{T}^{1:N}}\bigg[(x_{t}^{i})^{T}Qx_{t}^{i}+(u_{t}^{i})^{T}Ru_{t}^{i}}\\
&~~~~~~~~~~~\scalemath{1}{+\frac{1}{N-1}\sum_{j=1,j\not=i}^{N}(u_{t}^{i})^{T}\tilde Ru_{t}^{j}+(u_{t}^{j})^{T}\tilde Ru_{t}^{i}+\frac{1}{N-1}\sum_{j=1,j\not=i}^{N}(x_{t}^{i})^{T}\tilde Qx_{t}^{j}+(x_{t}^{j})^{T}\tilde Qx_{t}^{i}\bigg]}\nonumber,
\end{flalign}
where $R,\tilde R>0$ and $Q, \tilde Q \geq 0$.
\item[\textbf{Problem} \bf{($\mathcal{P}_{T,\text{LQG}}^{\infty,\text{MF}}$)}:] Consider the expected cost function of $\underline{\gamma}_{T}$ as
\begin{flalign}\label{eq:lqgmfinf}
&\scalemath{1}{J^{\infty}_{T}(\underline{\gamma}_{T})=\limsup\limits_{N\rightarrow \infty}J_{T}^{N}(\underline{\gamma}_{T})}.
\end{flalign}

\end{itemize}
\begin{corollary}\label{cor:3.9}
 For a fixed $T$ and $N$, consider a finite horizon team problem defined as ($\mathcal{P}_{T,\text{LQG}}^{N,\text{MF}}$). If $(x_{0}^{1:N})$ are exchangeable zero mean Gaussian random vectors, and $w^{i}_{t}$s are i.i.d. zero mean Gaussian random vectors for $i=1,\dots,N$, independent for $t=0,\dots, T-1$, and independent of initial states, then
  \begin{flalign}\label{eq:optmf}
 &\scalemath{1}{u^{i*,(T),(N)}_{t}=K_{t}^{(T)}x_{t}^{i}+\frac{L_{t}^{(N),(T)}}{N-1}\sum_{j=1,j\not=i}^{N} E[x_{0}^{j}|x_{0}^{i}]},
 \end{flalign}
 where $K_{t}^{(T)}$ and $P_{t}^{(T)}$ satisfy \eqref{eq:3.4.15} and \eqref{eq:3.4.16}, and $L_{t}^{(N),(T)}$ is a function of $K_{0:t}^{(T)}$.
\end{corollary}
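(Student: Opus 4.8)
The plan is to follow the proof of Theorem \ref{the:4.2} and Corollary \ref{the:3.10}, isolating the state cross-coupling $\tilde Q$ in \eqref{eq:4.4.12} as the single genuinely new ingredient. First I would check the structural hypotheses so that the $N$-DM analogue of Corollary \ref{lem:4.4.4.4} (which itself follows from Theorem \ref{the:3.1}) applies: the cost \eqref{eq:4.4.12} is a symmetric sum over the DMs with symmetric cross terms, so Assumption \ref{assump:3.1} holds; the information structure $I_t^i=\{x_{[0:t]}^i,u_{[0:t-1]}^i\}$ is symmetric and partially nested; and $(x_0^{1:N})$ exchangeable Gaussian together with the i.i.d.\ disturbances, independent of the initial states, give Assumptions (b)--(c) of Theorem \ref{the:3.1}. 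Hence (by the static reduction of \cite{HoChu} and Theorem \ref{the:cp}(iii)) the team is convex in policies and symmetrically optimal, so it suffices to search over identical policies; moreover, by \cite{HoChu} and Radner's theorem \cite{Radner} the person-by-person optimal policy for this partially nested LQG team is unique, linear, and globally optimal, so it is enough to exhibit a linear candidate and verify the person-by-person stationarity conditions.

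The second step is a mean--fluctuation decomposition that separates the roles of $(Q,R)$ and $(\tilde Q,\tilde R)$. Since DM $i$ observes $x_{[0:t]}^i$ (equivalently $x_0^i$ and $w_{0:t-1}^i$) and its dynamics \eqref{eq:state} are driven only by $u^i$ and $w^i$, under a symmetric candidate of the asserted form $x_t^i$ is a function of $(x_0^i,w_{0:t-1}^i)$ alone; I would then write $x_t^i=m_t^i+e_t^i$ with $m_t^i=\mathbb{E}(x_t^i\mid x_0^i)$ and $e_t^i$ the zero-mean, noise-driven fluctuation. Because the $w^i$ are i.i.d.\ across DMs and independent of the initial states, for $j\neq i$ all cross expectations involving fluctuations vanish, $\mathbb{E}[(m_t^i)^T\tilde Q e_t^j]=0$ and $\mathbb{E}[(e_t^i)^T\tilde Q e_t^j]=0$ (and likewise for the control couplings), so $\mathbb{E}[(x_t^i)^T\tilde Q x_t^j]=\mathbb{E}[(m_t^i)^T\tilde Q m_t^j]$. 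Thus the coupling blocks $\tilde Q,\tilde R$ act only through the conditional-mean components, while the diagonal blocks $Q,R$ act on both $m_t^i$ and $e_t^i$. This is exactly why the own-state feedback gain $K_t^{(T)}$, which governs the fluctuation part, obeys the centralized Riccati recursion \eqref{eq:3.4.15}--\eqref{eq:3.4.16} in which neither $\tilde Q$ nor $\tilde R$ appears.

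The third step is the person-by-person optimization itself. Fixing the other DMs at the symmetric candidate and minimizing DM $i$'s contribution by dynamic programming (equivalently, completing the square on the static reduction), the diagonal quadratic cost yields $u_t^i=K_t^{(T)}x_t^i+(\text{correction})$ with $K_t^{(T)}$ as above, and the correction is an affine, predictable term forced by the couplings and by the other DMs' mean controls. Since each coupling block in \eqref{eq:4.4.12} carries the factor $\tfrac{1}{N-1}$, and since the only $I_t^i$-measurable statistic of $\{x_0^j\}_{j\neq i}$ is the linear Gaussian conditional expectation $\mathbb{E}(x_0^j\mid x_0^i)$, the correction collapses to $\tfrac{L_t^{(N),(T)}}{N-1}\sum_{j\neq i}\mathbb{E}(x_0^j\mid x_0^i)$, where $L_t^{(N),(T)}$ is determined by a backward recursion in $K_{0:t}^{(T)}$ exactly as in Corollary \ref{the:3.10}; this is where the control-coupling contribution reproduces the $\tilde R$ terms of Corollary \ref{the:3.10} up to the rescaling $\tilde R\mapsto\tfrac{\tilde R}{N-1}$, and the $\tilde Q$ terms enter the same recursion through the mean dynamics. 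Verifying that this candidate satisfies the stationarity conditions and invoking the uniqueness from Radner's theorem completes the proof.

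I expect the main obstacle to be the second step: rigorously showing that, after taking expectations, the newly added state coupling $\tilde Q$ (and the control coupling $\tilde R$) influence only the mean trajectories and therefore feed only the $L_t^{(N),(T)}$ recursion, leaving the Riccati equations for $K_t^{(T)}$ untouched. This requires careful bookkeeping of which cross-covariances vanish under the i.i.d.\ noise and exchangeable-initial-state hypotheses, and confirming that the $\tfrac{1}{N-1}$ normalization propagates cleanly through the backward recursion so that it factors out of the optimal policy as claimed.
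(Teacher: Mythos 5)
Your proposal is correct and takes essentially the same route as the paper: the paper's proof of this corollary simply defers to that of Theorem \ref{the:4.2}, i.e., symmetric optimality via Corollary \ref{lem:4.4.4.4}, Ho--Chu static reduction together with Radner's uniqueness so that person-by-person optimality of a linear candidate implies global optimality, and then verification of the candidate by dynamic programming/induction (a calculation the paper omits ``due to space constraints''). Your mean--fluctuation decomposition in the second step is just the bookkeeping, absent from the paper, showing why the new $\tilde Q$ coupling and the $\frac{1}{N-1}$ scaling feed only the $L_{t}^{(N),(T)}$ recursion while leaving the Riccati equations \eqref{eq:3.4.15}--\eqref{eq:3.4.16} for $K_{t}^{(T)}$ untouched.
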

\begin{proof}
The proof is similar to the one in Theorem \ref{the:4.2}.
\end{proof}
\begin{corollary}
 For a fixed $T$, consider a finite horizon team problem defined as ($\mathcal{P}_{T,\text{LQG}}^{\infty,\text{MF}}$). Assume $\{x_{0}^{i}\}_{i \in \mathbb{N}}$ are exchangeable random vectors with zero mean Gaussian distribution, and $w^{i}_{t}$s are i.i.d. zero mean Gaussian random vectors for $i \in \mathbb{N}$, independent for $t=0,\dots, T-1$, and independent of initial states. If $L_{t}^{(N),(T)}$ in \eqref{eq:optmf} converges pointwise as $N \to \infty$ to $L_{t}^{(\infty),(T)}$, then
  \begin{flalign*}
 &u^{i*,(T),(\infty)}_{t}=K_{t}^{(T)}x_{t}^{i}+L_{t}^{(\infty),(T)}\Sigma x_{0}^{i},
 \end{flalign*}
 where $K_{t}^{(T)}$ and $P_{t}^{(T)}$ satisfy \eqref{eq:3.4.15} and \eqref{eq:3.4.16}, and $\Sigma= E[x_{0}^{1}(x_{0}^{2})^{T}]( E[x_{0}^{2}(x_{0}^{2})^{T}])^{-1}$.
\end{corollary}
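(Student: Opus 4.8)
The plan is to obtain the limit policy as the pointwise limit of the explicit $N$-DM optimal policies of Corollary~\ref{cor:3.9}, and then to invoke Theorem~\ref{the:mft} (together with Remark~\ref{rem:mft}, since the action spaces are not compact here) to conclude global optimality for $(\mathcal{P}_{T,\text{LQG}}^{\infty,\text{MF}})$. Recall from Corollary~\ref{cor:3.9} that for each $N$ the optimal policy is
\[
u^{i*,(T),(N)}_{t}=K_{t}^{(T)}x_{t}^{i}+\frac{L_{t}^{(N),(T)}}{N-1}\sum_{j=1,j\neq i}^{N}\mathbb{E}(x_{0}^{j}|x_{0}^{i}),
\]
with $K_{t}^{(T)}$ and $P_{t}^{(T)}$ given by \eqref{eq:3.4.15}--\eqref{eq:3.4.16}. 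The first summand does not depend on $N$, so the whole content of the limit is in the averaged conditional-expectation term.

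The key observation is that this averaged term is in fact independent of $N$. Since $\{x_{0}^{i}\}_{i\in\mathbb{N}}$ are jointly Gaussian and zero mean, the conditional expectation is linear, $\mathbb{E}(x_{0}^{j}|x_{0}^{i})=\mathbb{E}(x_{0}^{j}(x_{0}^{i})^{T})(\mathbb{E}(x_{0}^{i}(x_{0}^{i})^{T}))^{-1}x_{0}^{i}$; and since the $x_0^i$ are exchangeable, the cross-covariance $\mathbb{E}(x_{0}^{j}(x_{0}^{i})^{T})$ equals $\mathbb{E}(x_{0}^{1}(x_{0}^{2})^{T})$ for every $j\neq i$, while $\mathbb{E}(x_{0}^{i}(x_{0}^{i})^{T})=\mathbb{E}(x_{0}^{2}(x_{0}^{2})^{T})$. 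Hence $\mathbb{E}(x_{0}^{j}|x_{0}^{i})=\Sigma x_{0}^{i}$ for every $j\neq i$, with $\Sigma$ exactly as in the statement, so that $\frac{1}{N-1}\sum_{j\neq i}\mathbb{E}(x_{0}^{j}|x_{0}^{i})=\Sigma x_{0}^{i}$ identically in $N$. Consequently $u^{i*,(T),(N)}_{t}=K_{t}^{(T)}x_{t}^{i}+L_{t}^{(N),(T)}\Sigma x_{0}^{i}$, and letting $N\to\infty$ under the hypothesis $L_{t}^{(N),(T)}\to L_{t}^{(\infty),(T)}$ yields the claimed pointwise limit $u^{i*,(T),(\infty)}_{t}=K_{t}^{(T)}x_{t}^{i}+L_{t}^{(\infty),(T)}\Sigma x_{0}^{i}$.

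It then remains to upgrade this pointwise convergence to optimality. The LQG mean-field team with the partially nested (tree) information pattern admits a static reduction and is convex in policies as a consequence of Radner's theorem, and by Corollary~\ref{lem:4.4.4.4} it is symmetrically optimal, so the structural hypotheses of Theorem~\ref{the:mft} hold for every $N$. Because the action spaces are $\mathbb{R}^{m}$ rather than compact, I would invoke Remark~\ref{rem:mft}: it suffices to verify condition (A1). Here the policy difference is $u^{i*,(T),(N)}_{t}-u^{i*,(T),(\infty)}_{t}=(L_{t}^{(N),(T)}-L_{t}^{(\infty),(T)})\Sigma x_{0}^{i}$; the matrix coefficient is bounded in $N$ (a convergent sequence is bounded) and $x_{0}^{i}$ is Gaussian, hence has finite moments of every order, so $\sup_{N}\mathbb{E}(|u^{i*,(T),(N)}_{t}-u^{i*,(T),(\infty)}_{t}|^{2})<\infty$ and (A1) holds with $\delta=1$. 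This uniform integrability justifies exchanging limit and expectation in \eqref{eq:ucon}, whence Theorem~\ref{the:mft} applies and the identically symmetric limit policy is globally optimal for $(\mathcal{P}_{T,\text{LQG}}^{\infty,\text{MF}})$.

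The main obstacle is technical rather than conceptual: confirming that Theorem~\ref{the:mft} may be invoked despite the non-compactness of the action set, i.e., verifying (A1), and checking that the convexity and symmetric-information-structure hypotheses transfer to the mean-field LQG cost \eqref{eq:lqgmfinf} with its $\frac{1}{N-1}$ coupling (which differs from the $\frac{1}{N}$ normalization of $(\mathcal{P}_{T}^{N,\text{MF}})$ only by an asymptotically negligible self-interaction term). The algebraic simplification $\frac{1}{N-1}\sum_{j\neq i}\mathbb{E}(x_{0}^{j}|x_{0}^{i})=\Sigma x_{0}^{i}$, valid for every $N$ by Gaussianity and exchangeability, is precisely what renders the limit explicit and trivializes the otherwise delicate passage to the mean-field limit.
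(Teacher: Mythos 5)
Your proof is correct and follows essentially the same route as the paper, whose entire proof is the single line ``Invoking Theorem \ref{the:mft} and Corollary \ref{cor:3.9}; using Remark \ref{rem:mft} completes the proof.'' The details you supply---that exchangeability and joint Gaussianity make $\frac{1}{N-1}\sum_{j\neq i}\mathbb{E}(x_{0}^{j}|x_{0}^{i})=\Sigma x_{0}^{i}$ identically in $N$, and the explicit verification of condition (A1) with $\delta=1$ to handle the non-compact action spaces---are precisely the steps the paper leaves implicit.
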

\begin{proof}
Following from \cite[page 9]{aldous2006ecole}, since $\{x_{0}^{i}\}_{i \in \mathbb{N}}$ are exchangeable Gaussian random vectors, we can describe them explicitly as $x_{0}^{i}=\omega_{0}+\theta^{i}$ where $(\theta^{1},\theta^{2}, \dots)$ are i.i.d. mean zero Gaussian and independent of mean zero Gaussian random vector $\omega_{0}$. Now, we invoke Theorem \ref{the:mft} (or Theorem \ref{the:mftergodic}) and Corollary \ref{cor:3.9} and we use Remark \ref{rem:mft} to complete the proof.
\end{proof}
\subsection{Average cost infinite horizons problems for partially nested dynamic teams}\label{sec:ave}
In the following, we consider average cost problems with a symmetric partially nested information structure. We note that the optimality of linear policies for infinite horizon LQG problems is an open problem in its generality. In this subsection, we provide a positive result for a class of such problems.

 Now, we consider an infinite horizon team problem and we use the result in Section \ref{sec:3a}.
\begin{itemize}[wide = 0pt]
 \item[\textbf{Problem} \bf{($\mathcal{P}_{\infty}^{\text{tree}}$)}:]
 Consider the expected cost function of $(\underline{\gamma}^{1},\underline{\gamma}^{2})$ as
  \begin{flalign}\label{eq:1.2}
&J(\underline{\gamma}^{1:2})=\limsup\limits_{T\rightarrow \infty} E^{(\underline{\gamma}^{1},\underline{\gamma}^{2})}[c(x_{0:T-1}^{1:2},u_{0:T-1}^{1:2})],
\end{flalign}
where the cost function is defined as \eqref{eq:1.1} and $I_{t}^{i}=\{x_{[0:t]}^{i}, u_{[0:t-1]}^{i}\}$.
\end{itemize}

 First, we introduce a lemma essential for Theorem \ref{the:4.}.
\begin{lemma}\label{lem:1}
Consider the sequence $\{a^{i}_{T}\}_{i=1}^{T}$. Assume $\lim\limits_{T\rightarrow \infty}a_{T}^{i}=a$ for $i=0,\dots,T-1$. If for every fixed $T\in \mathbb{N}$, $a_{T}^{l}=a_{T+1}^{l+1}$ for all $l=0\dots,T-1$, then $\lim\limits_{T\rightarrow \infty}\frac{1}{T}\sum_{i=1}^{T}a_{T}^{i}=a$.
\end{lemma}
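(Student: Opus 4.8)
The plan is to exploit the shift-consistency hypothesis to collapse the triangular array $\{a_T^i\}$ into a single sequence, and then to recognize the normalized sum as a Cesàro average. First I would note that the relation $a_T^l = a_{T+1}^{l+1}$ leaves the quantity $T-l$ invariant, since $(T+1)-(l+1) = T-l$; hence any two entries sharing the same value of $T-l$ are forced to coincide. Iterating the shift gives $a_T^l = a_{T+k}^{l+k}$ for every $k \geq 0$, so I can unambiguously define a single-indexed sequence $c_n$ by setting $c_n := a_T^l$ for any pair $(T,l)$ with $T-l = n$. Well-definedness of $c_n$ is exactly the content of the consistency condition.

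The second step is to re-read the convergence hypothesis in terms of $c_n$. Interpreting $\lim_{T\to\infty} a_T^i = a$ as a statement about each fixed column index $i$, I would write $a_T^i = c_{T-i}$; as $T \to \infty$ with $i$ held fixed one has $T-i \to \infty$, so the hypothesis becomes precisely $\lim_{n\to\infty} c_n = a$. Thus the double-indexed convergence assumption is equivalent to ordinary convergence of the single sequence $\{c_n\}$ to $a$.

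Finally, I would rewrite the target average along the same diagonals. Substituting $a_T^i = c_{T-i}$ and letting $n = T-i$ range over $0,\dots,T-1$ as $i$ ranges over $1,\dots,T$ gives
\begin{equation*}
\frac{1}{T}\sum_{i=1}^{T} a_T^i = \frac{1}{T}\sum_{n=0}^{T-1} c_n,
\end{equation*}
which is exactly the Cesàro mean of $\{c_n\}$. Since $c_n \to a$, the classical theorem that the Cesàro averages of a convergent sequence converge to the same limit yields $\frac{1}{T}\sum_{n=0}^{T-1} c_n \to a$, completing the proof.

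I expect no genuine analytic obstacle here; the only care needed is bookkeeping with the index ranges (the statement mixes $i=1,\dots,T$ in the sum with $i=0,\dots,T-1$ in the hypotheses) and making explicit that it is precisely the consistency condition $a_T^l = a_{T+1}^{l+1}$ that renders the diagonal sequence $c_n$ well-defined. Once the array is collapsed to this single sequence, the statement is just the standard Cesàro convergence lemma applied to $\{c_n\}$.
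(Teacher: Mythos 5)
Your proof is correct and is essentially the paper's own argument: the paper likewise uses the shift relation to collapse the array along diagonals, rewriting $a_T^i = a_{T-i+1}^{1}$ so that $\frac{1}{T}\sum_{i=1}^{T}a_T^i = \frac{1}{T}\sum_{k=1}^{T}a_k^{1}$, and then invokes the Ces\`aro mean theorem. Your diagonal sequence $c_n$ is exactly the paper's first-column sequence up to reindexing ($c_n = a_{n+1}^{1}$), so the two proofs coincide.
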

\begin{proof}
We have
\begin{equation*}
\scalemath{1}{\lim\limits_{T\rightarrow \infty}\frac{1}{T}\sum_{i=1}^{T}a_{T}^{i}=\lim\limits_{T\rightarrow \infty}\frac{1}{T}\sum_{i=0}^{T-1}a_{T-i}^{1}=\lim\limits_{T\rightarrow \infty}\frac{1}{T}\sum_{k=1}^{T}a_{k}^{1}=a},
\end{equation*}
where the second equality follows from $a_{T}^{l}=a_{T+1}^{l+1}$  and the last equality follows from the Ces\'aro mean argument.
\end{proof}

\begin{theorem}\label{the:4.}
Consider average cost infinite horizon team problems defined as ($\mathcal{P}_{\infty}^{tree}$). Assume $(A,B)$ are stabilizable and $(A,Q^{\frac{1}{2}})$ are detectable. Assume $x_{0}^{1}$ and $x_{0}^{2}$  are exchangeable with an identical zero mean Gaussian distribution and $w^{i}_{t}$s are i.i.d. zero mean Gaussian random variables for $i=1,2$ and for all $t=0,\dots, T-1$ and independent of initial states. If $L_{t}^{(T)}$ in \eqref{eq:3.4.17} converges pointwise to $L_{t}^{(\infty)}$ as $T \to \infty$, then the pointwise limit of the sequence of optimal policies for ($\mathcal{P}_{T}^{\text{tree}}$) is team optimal for ($\mathcal{P}_{\infty}^{tree}$) and stabilizes the closed-loop system,
 \begin{flalign*}
 &u^{1*,(\infty)}_{t}=Kx_{t}^{1}+L_{t}^{(\infty)} E[x_{0}^{2}|x_{0}^{1}],\\
 &u^{2*,(\infty)}_{t}=Kx_{t}^{2}+L_{t}^{(\infty)} E[x_{0}^{1}|x_{0}^{2}],
 \end{flalign*}
 where $K, P, L_{t}^{(\infty)}$ and $G_{t}^{(\infty)}$ are the pointwise limit of the ones for ($\mathcal{P}_{T}^{tree}$) as $T \to \infty$.
\end{theorem}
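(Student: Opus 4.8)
The plan is to prove optimality by a sandwich argument between the finite-horizon optimal values and the average cost of the proposed limit policy. Write $J_{T}^{*}:=J_{T}(\underline{\gamma}^{*}_{T})$ for the optimal value of ($\mathcal{P}_{T}^{\text{tree}}$), which by Theorem \ref{the:4.2} is attained by the linear policy with gains $K_{t}^{(T)},L_{t}^{(T)}$ and equals the explicit expression \eqref{eq:optcost}. I would prove three facts: (i) the limit $\lim_{T\to\infty}J_{T}^{*}=:J^{*}$ exists; (ii) every admissible policy $\underline{\gamma}$ for ($\mathcal{P}_{\infty}^{\text{tree}}$) satisfies $J(\underline{\gamma})\geq J^{*}$; and (iii) the limit policy $\underline{\gamma}^{*,(\infty)}$ is admissible, mean-square stabilizing, and achieves $J(\underline{\gamma}^{*,(\infty)})=J^{*}$. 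These combine to give $J^{*}=J(\underline{\gamma}^{*,(\infty)})\geq \inf_{\underline{\gamma}}J(\underline{\gamma})\geq J^{*}$, so $\underline{\gamma}^{*,(\infty)}$ is globally optimal and, by (iii), stabilizing. Admissibility of the limit policy is immediate since $\mathbb{E}(x_{0}^{2}|x_{0}^{1})$ is a function of $x_{0}^{1}\in x^{1}_{[0:t]}$, so it respects $I_{t}^{i}=\{x^{i}_{[0:t]},u^{i}_{[0:t-1]}\}$.

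The lower bound (ii) is the easy direction and I would dispatch it first. Because ($\mathcal{P}_{T}^{\text{tree}}$) and ($\mathcal{P}_{\infty}^{\text{tree}}$) share the same information structure and the same per-stage cost $c_{t}$ appearing in \eqref{eq:1.1}, the restriction of any admissible infinite-horizon policy $\underline{\gamma}$ to $\{0,\dots,T-1\}$ is admissible for the finite-horizon problem, so $\frac{1}{T}\mathbb{E}[\sum_{t=0}^{T-1}c_{t}(\underline{\gamma})]\geq J_{T}^{*}$ for every $T$. Taking $\limsup_{T}$ and invoking (i) yields $J(\underline{\gamma})=\limsup_{T}\frac{1}{T}\mathbb{E}[\sum_{t<T}c_{t}(\underline{\gamma})]\geq J^{*}$.

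For (i) I would first invoke standard LQ theory: under $(A,B)$ stabilizable and $(A,Q^{1/2})$ detectable, the backward Riccati recursion \eqref{eq:3.4.16} with terminal condition $P_{T}^{(T)}=0$ is autonomous, so $P_{t}^{(T)}$ depends only on the number of remaining stages $T-t$; consequently $P_{t}^{(T)}=P_{t+1}^{(T+1)}$ and, as $T-t\to\infty$, $P_{t}^{(T)}\to P$, the unique positive semidefinite stabilizing solution of the algebraic Riccati equation, with $K_{t}^{(T)}\to K$ from \eqref{eq:3.4.15} and $A_{c}:=A+BK$ Schur stable. The shift-invariance $P_{t}^{(T)}=P_{t+1}^{(T+1)}$ is precisely the hypothesis of Lemma \ref{lem:1}. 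Setting $a_{T}^{t}:=\mathbb{E}((w_{t}^{1})^{T}P_{t}^{(T)}w_{t}^{1})$, which converges to $\mathrm{Tr}(P\,\mathrm{Cov}(w^{1}))$ for each fixed $t$ and is shift-invariant since the $w_{t}^{i}$ are i.i.d., Lemma \ref{lem:1} shows the disturbance term of \eqref{eq:optcost}, divided by $T$, converges; the initial-state term is $O(1)$ and vanishes after dividing by $T$; and the two coupling terms are treated by the same Cesàro mechanism using the assumed convergence $L_{t}^{(T)}\to L_{t}^{(\infty)}$ together with $P_{t}^{(T)}\to P$. This establishes $\lim_{T}J_{T}^{*}=J^{*}$.

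Finally, for (iii), closing the loop under the limit policy gives $x_{t+1}^{1}=A_{c}x_{t}^{1}+BL_{t}^{(\infty)}\mathbb{E}(x_{0}^{2}|x_{0}^{1})+w_{t}^{1}$ (and symmetrically for DM$^{2}$), a linear Gaussian recursion with Schur-stable $A_{c}$; its state covariance stays uniformly bounded once $\{L_{t}^{(\infty)}\}_{t}$ is bounded, giving mean-square stability and a well-defined average cost, whose Cesàro limit I would again compute via the shift/convergence argument and match to $J^{*}$. I expect the main obstacle to be exactly this matching step: controlling the coupling contributions — in particular the cross term $\sum_{t}\mathbb{E}((x_{0}^{1})^{T}(A^{T})^{t}P_{t+1}^{(T)}BL_{t}^{(T)}\mathbb{E}(x_{0}^{2}|x_{0}^{1}))$ of \eqref{eq:optcost}, whose open-loop factor $(A^{T})^{t}$ must be shown to recombine with $L_{t}^{(T)}$ and the closed-loop products $G_{t}^{(T)}$ of \eqref{eq:3.4.17} into an $O(T)$ sum even when $A$ is unstable — and rigorously justifying the exchange of the horizon limit $T\to\infty$ with the Cesàro average. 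The uniform mean-square boundedness from stability of $A_{c}$, together with boundedness (or summability) of $\{L_{t}^{(\infty)}\}_{t}$, are the key enabling estimates that make this exchange legitimate.
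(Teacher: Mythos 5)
Your proposal follows essentially the same route as the paper's proof: both reduce optimality to showing that the finite-horizon cost of the limit policy asymptotically matches the finite-horizon optimal values $J_T(\underline{\gamma}^{*}_{T})$, using the explicit formula \eqref{eq:optcost}, the shift-invariance $P_{t+1}^{(T+1)}=P_{t}^{(T)}$ together with Lemma \ref{lem:1}, convergence of the Riccati recursion to the stabilizing solution $P$, and the assumed pointwise convergence $L_{t}^{(T)}\to L_{t}^{(\infty)}$, and both conclude stability from Schur stability of $A+BK$ plus boundedness of $L_{t}^{(\infty)}$. The only cosmetic difference is that you inline the optimality-transfer (sandwich) argument as your steps (i)--(ii), which the paper outsources to \cite[Theorem 5]{sanjari2018optimal} and \cite[Theorem 1]{mahajan2013static}, and the ``main obstacle'' you flag (the cross term in \eqref{eq:optcost}) is dispatched in the paper by observing that finiteness of $\sum_{s=t+1}^{\infty}B^{T}(A^{T})^{s-t}PBL_{s}^{(\infty)}$ forces $L_{s}^{(\infty)}\to 0$, exactly the decay estimate you anticipated needing.
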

\begin{proof}
We show $\limsup\limits_{T\rightarrow \infty}J_{T}(\underline{\gamma}_{T}^{*})=J(\underline{\gamma}_{\infty}^{*})$ and invoke \cite[Theorem 5]{sanjari2018optimal} or \cite[Theorem 1]{mahajan2013static} to complete the proof. From \eqref{eq:optcost}, we have
\begin{flalign}
&\scalemath{1}{\lim\limits_{T\rightarrow \infty}|J_{T}(\underline{\gamma}_{\infty}^{*})-J_{T}(\underline{\gamma}_{T}^{*})|}\nonumber\\
&\scalemath{1}{=\lim\limits_{T\rightarrow \infty}\frac{2}{T}\bigg| E\bigg[(x_{0}^{1})^{T}(P_{0}^{(T)}-P)x_{0}^{1}\bigg]+\frac{2}{T}\sum_{t=0}^{T-1} E\bigg[(w_{t}^{1})^{T}(P_{t}^{(T)}-P)w_{t}^{1}\bigg]}\label{eq:a.35}\\
&\scalemath{1}{+\frac{2}{T}\sum_{t=0}^{T-1} E\bigg[\bigg( E[x_{0}^{2}|x_{0}^{1}]\bigg)^{T}\bigg((L_{t}^{(T)})^{T}B^{T}P_{t+1}^{(T)}BL_{t}^{(T)}-(L_{t}^{(\infty)})^{T}B^{T}PBL_{t}^{(\infty)}\bigg) E[x_{0}^{2}|x_{0}^{1}]\bigg]}\label{eq:a.36}\\
&\scalemath{1}{+\frac{2}{T}\sum_{t=0}^{T-1} E\bigg[(x_{0}^{1})^{T}(A^{T})^{t-1}\bigg(P_{t+1}^{(T)}BL_{t}^{(T)}-PBL_{t}^{(\infty)}\bigg) E[x_{0}^{2}|x_{0}^{1}]\bigg]\bigg|}\label{eq:a.37}\\
&\scalemath{1}{=0}\label{eq:a.38},
\end{flalign}
where \eqref{eq:a.35} is zero since $P_{0}^{(T)}$ converges to $P$ using Lemma \ref{lem:1}  since $P_{t+1}^{(T+1)}=P_{t}^{(T)}$. Expression \eqref{eq:a.37} converges to zero since $L_{t}^{(T)}$ in \eqref{eq:3.4.17} converges pointwise to $L_{t}^{(\infty)}$ as $T \to \infty$, we have $\sum_{s=t+1}^{\infty}B^{T}(A^{T})^{s-t}PBL_{s}^{(\infty)}<\infty$, and this implies that $\lim\limits_{s\rightarrow \infty}L_{s}^{(\infty)}=0$. Hence, we have for every $\epsilon>0$, there exists $\hat{N}>T$ such that for every $t>\hat{N}$, $|Tr[L_{t}^{(\infty)}(L_{t}^{(\infty)})^{T}]|<\epsilon$. We define $L_{t}^{(T)}=0$ for $t>T$. Expression \eqref{eq:a.36} is equal to zero following from Lemma \ref{lem:1} and the fact that $|Tr[L_{t}^{(\infty)}(L_{t}^{(\infty)})^{T}]|<\epsilon$  for every $t>\hat{N}$. Hence, equality \eqref{eq:a.38} is true and global optimality follows from \cite[Theorem 5]{sanjari2018optimal}. The closed loop system is stable since $\limsup\limits_{t\rightarrow \infty} E(||x_{t}^{1}||^{2})<\infty$  following from $||A+BK||<1$ (all the eigenvalues of $A+BK$ are inside of the unit circle), and since  $||L_{t}^{(\infty)}||$ is uniformly bounded.
\end{proof}

\section{Conclusion}\label{sec:con}
In this paper, we studied dynamic teams with symmetric information structures. We presented a characterization for symmetrically optimal teams for convex exchangeable team problems. For mean-field teams with symmetric information structures, we showed the convergence of optimal policies for mean-field teams with $N$ decision makers to the corresponding optimal policy of mean-field teams. We obtained globally optimal solutions for LQG dynamic team problems with symmetric partially nested information structures. Moreover, we obtained globally optimal policies for average cost infinite horizon problems of LQG dynamic teams. 

In this paper since we worked under the convexity assumption, the information structure does not allow for the mean-field coupling in the dynamics. In our recent work \cite{SSYdefinetti2020}, we relaxed the convexity assumption to arrive at complementary existence and structural results.

\bibliographystyle{plain}

\begin{thebibliography}{10}

\bibitem{aldous2006ecole}
D.~J. Aldous, I.~A. Ibragimov, and J.~Jacod.
\newblock {\em Ecole d'Ete de Probabilites de Saint-Flour XIII, 1983}, volume
  1117.
\newblock Springer, 1985.

\bibitem{arabneydi2017certainty}
J.~Arabneydi and A.~Aghdam.
\newblock A certainty equivalence result in team-optimal control of mean-field
  coupled markov chains.
\newblock In {\em IEEE 56th Annual Conference on Decision and Control (CDC)},
  pages 3125--3130, 2017.

\bibitem{arabneydi2015team}
J.~Arabneydi and A.~Mahajan.
\newblock Team-optimal solution of finite number of mean-field coupled {LQG}
  subsystems.
\newblock In {\em IEEE 54th Annual Conference on Decision and Control (CDC)},
  pages 5308--5313, 2015.

\bibitem{arapostathis2017solutions}
A.~Arapostathis, A.~Biswas, and J.~Carroll.
\newblock On solutions of mean field games with ergodic cost.
\newblock {\em Journal de Math{\'e}matiques Pures et Appliqu{\'e}es},
  107(2):205--251, 2017.

\bibitem{bardi2017non}
M.~Bardi and M.~Fischer.
\newblock On non-uniqueness and uniqueness of solutions in finite-horizon mean
  field games.
\newblock {\em ESAIM: Control, Optimisation and Calculus of Variations}, 25:44,
  2019.

\bibitem{bardi2014linear}
M.~Bardi and F.~S. Priuli.
\newblock Linear-quadratic {N}-person and mean-field games with ergodic cost.
\newblock {\em SIAM Journal on Control and Optimization}, 52(5):3022--3052,
  2014.

\bibitem{bayraktar2020non}
E.~Bayraktar and X.~Zhang.
\newblock On non-uniqueness in mean field games.
\newblock {\em Proceedings of the American Mathematical Society},
  148:4091--4106, 2020.

\bibitem{Billingsley}
P.~Billingsley.
\newblock {\em Convergence of Probability Measures}.
\newblock Wiley, New York, 1968.

\bibitem{Blackwell2}
D.~Blackwell.
\newblock Memoryless strategies in finite-stage dynamic programming.
\newblock {\em Annals of Mathematical Statistics}, 35:863--865, 1964.

\bibitem{BorkarRealization}
V.~S. Borkar.
\newblock White-noise representations in stochastic realization theory.
\newblock {\em SIAM Journal on Control and Optimization}, 31:1093--1102, 1993.

\bibitem{caines2018peter}
P.~Caines, M.~Huang, and R.~Malham{\'e}.
\newblock Mean field games.
\newblock {\em Handbook of Dynamic Game Theory}, pages 345--372, 2017.

\bibitem{cardaliaguet2019example}
P.~Cardaliaguet and C.~Rainer.
\newblock An example of multiple mean field limits in ergodic differential
  games.
\newblock {\em Nonlinear Differential Equations and Applications NoDEA},
  27(3):1--19, 2020.

\bibitem{carmona2018probabilistic}
R.~Carmona and F.~Delarue.
\newblock {\em Probabilistic Theory of Mean Field Games with Applications
  I-II}.
\newblock Springer, 2018.

\bibitem{carmona2016mean}
R.~Carmona, F.~Delarue, and D.~Lacker.
\newblock Mean field games with common noise.
\newblock {\em The Annals of Probability}, 44(6):3740--3803, 2016.

\bibitem{fischer2017connection}
M.~Fischer.
\newblock On the connection between symmetric {N}-player games and mean field
  games.
\newblock {\em Annals of Applied Probability}, 27(2):757--810, 2017.

\bibitem{gupta2014existence}
A.~Gupta, S.~Y\"uksel, T.~Ba\c{s}ar, and C.~Langbort.
\newblock On the existence of optimal policies for a class of static and
  sequential dynamic teams.
\newblock {\em SIAM Journal on Control and Optimization}, 53:1681--1712, 2015.

\bibitem{hajek2019non}
B.~Hajek and M.~Livesay.
\newblock On non-unique solutions in mean field games.
\newblock In {\em IEEE 58th Annual Conference on Decision and Control (CDC)},
  pages 1219--1224, 2019.

\bibitem{HoChu}
Y.~C. Ho and K.~C. Chu.
\newblock Team decision theory and information structures in optimal control
  problems - part {I}.
\newblock {\em IEEE Transactions on Automatic Control}, 17:15--22, February
  1972.

\bibitem{ho1973equivalence}
Y.~C. Ho and K.~C. Chu.
\newblock On the equivalence of information structures in static and dynamic
  teams.
\newblock {\em IEEE Transactions on Automatic Control}, 18(2):187--188, 1973.

\bibitem{huang2012social}
M.~Huang, P.~E. Caines, and R.~P. Malham{\'e}.
\newblock Social optima in mean field {LQG} control: centralized and
  decentralized strategies.
\newblock {\em IEEE Transactions on Automatic Control}, 57(7):1736--1751, 2012.

\bibitem{CainesMeanField1}
M.~Huang, R.P. Malham\'e, and P.E. Caines.
\newblock Large population stochastic dynamic games: Closed-loop
  {M}c{L}ean-{V}lasov systems and the {N}ash certainty equivalence principle.
\newblock {\em Special issue in honour of the 65th birthday of Tyrone Duncan,
  Communications in Information and Systems}, 6:221--252, 2006.

\bibitem{lacker2015mean}
D.~Lacker.
\newblock Mean field games via controlled martingale problems: existence of
  markovian equilibria.
\newblock {\em Stochastic Processes and their Applications}, 125(7):2856--2894,
  2015.

\bibitem{lacker2016general}
D.~Lacker.
\newblock A general characterization of the mean field limit for stochastic
  differential games.
\newblock {\em Probability Theory and Related Fields}, 165(3-4):581--648, 2016.

\bibitem{lacker2017limit}
D.~Lacker.
\newblock Limit theory for controlled {M}ckean--{V}lasov dynamics.
\newblock {\em SIAM Journal on Control and Optimization}, 55(3):1641--1672,
  2017.

\bibitem{lacker2018convergence}
D.~Lacker.
\newblock On the convergence of closed-loop nash equilibria to the mean field
  game limit.
\newblock {\em The Annals of Applied Probability}, 30(4):1693--1761, 2020.

\bibitem{LyonsMeanField}
J.~M. Lasry and P.~L. Lions.
\newblock Mean field games.
\newblock {\em Japanese J. of Mathematics}, 2:229--260, 2007.

\bibitem{lessard2015optimal}
L.~Lessard and S.~Lall.
\newblock Optimal control of two-player systems with output feedback.
\newblock {\em IEEE Transactions on Automatic Control}, 60(8):2129--2144, 2015.

\bibitem{light2018mean}
B.~Light and G.~Y. Weintraub.
\newblock Mean field equilibrium: uniqueness, existence, and comparative
  statics.
\newblock {\em Existence, and Comparative Statics}, 2018.

\bibitem{mahajan2013static}
A.~Mahajan, N.~C. Martins, and S.~Y\"uksel.
\newblock Static {LQG} teams with countably infinite players.
\newblock In {\em IEEE 52nd Annual Conference on Decision and Control (CDC)},
  pages 6765--6770, 2013.

\bibitem{nayyar2014optimal}
A.~Nayyar and L.~Lessard.
\newblock Optimal control for {LQG} systems on graphs---part {I}: Structural
  results.
\newblock {\em arXiv preprint arXiv:1408.2551}, 2014.

\bibitem{Radner}
R.~Radner.
\newblock Team decision problems.
\newblock {\em Annals of Mathematical Statistics}, 33:857--881, 1962.

\bibitem{rotkowitz2008information}
M.~Rotkowitz.
\newblock On information structures, convexity, and linear optimality.
\newblock In {\em IEEE 47th Annual Conference on Decision and Control (CDC)},
  pages 1642--1647, 2008.

\bibitem{Rotkowitz}
M.~Rotkowitz and S.~Lall.
\newblock A characterization of convex problems in decentralized control.
\newblock {\em IEEE Transactions on Automatic Control}, 51:274--286, February
  2006.

\bibitem{saldi2019topology}
N.~Saldi.
\newblock A topology for team policies and existence of optimal team policies
  in stochastic team theory.
\newblock {\em IEEE Transactions on Automatic Control}, 65(1):310--317, 2019.

\bibitem{SSYdefinetti2020}
S.~Sanjari, N.~Saldi, and S.~Y{\"u}ksel.
\newblock Optimality of independently randomized symmetric policies for
  exchangeable stochastic teams with infinitely many decision makers.
\newblock {\em arXiv preprint:2008.11570}, 2020.

\bibitem{sinacdc2018optimal}
S.~Sanjari and S.~Y{\"u}ksel.
\newblock Optimal stochastic teams with infinitely many decision makers and
  {M}ean-field teams.
\newblock In {\em IEEE 57th Annual Conference on Decision and Control (CDC)},
  pages 4123--4128, 2018.

\bibitem{sinacdc2019convex}
S.~Sanjari and S.~Y{\"u}ksel.
\newblock Convex symmetric stochastic dynamic teams and their mean-field limit.
\newblock In {\em IEEE 58th Annual Conference on Decision and Control (CDC)},
  pages 4662--4667, 2019.

\bibitem{sanjari2018optimal}
S.~Sanjari and S.~Y{\"u}ksel.
\newblock Optimal solutions to infinite-player stochastic teams and mean-field
  teams.
\newblock {\em IEEE Transactions on Automatic Control, to appear (also
  arXiv:1808.02962v2)}, 2019.

\bibitem{Schal}
M.~Sch\"al.
\newblock Conditions for optimality in dynamic programming and for the limit of
  n-stage optimal policies to be optimal.
\newblock {\em Z. Wahrscheinlichkeitsth}, 32:179--296, 1975.

\bibitem{serfozo1982convergence}
R.~Serfozo.
\newblock Convergence of {L}ebesgue integrals with varying measures.
\newblock {\em Sankhy{\=a}: The Indian Journal of Statistics, Series A}, pages
  380--402, 1982.

\bibitem{shah2013cal}
P.~Shah and P.~Parrilo.
\newblock H2 optimal decentralized control over posets: A state-space solution
  for state-feedback.
\newblock {\em IEEE Transactions on Automatic Control}, 58(12):3084--3096,
  2013.

\bibitem{swigart2014optimal}
J.~Swigart and S.~Lall.
\newblock Optimal controller synthesis for decentralized systems over graphs
  via spectral factorization.
\newblock {\em IEEE Transactions on Automatic Control}, 59(9):2311--2323, 2014.

\bibitem{west1996introduction}
D.~B. West.
\newblock {\em Introduction to graph theory}, volume~2.
\newblock Prentice hall Upper Saddle River, NJ, 1996.

\bibitem{wit68}
H.~S. Witsenhausen.
\newblock A counterexample in stochastic optimal control.
\newblock {\em SIAM Journal on Control and Optimization}, 6:131--147, 1968.

\bibitem{wit75}
H.~S. Witsenhausen.
\newblock The intrinsic model for discrete stochastic control: Some open
  problems.
\newblock {\em Lecture Notes in Econ. and Math. Syst., Springer-Verlag},
  107:322--335, 1975.

\bibitem{wit88}
H.S. Witsenhausen.
\newblock Equivalent stochastic control problems.
\newblock {\em Mathematics of Control, Signals and Systems}, 1(1):3--11, 1988.

\bibitem{yuksel2018general}
S.~Y{\"u}ksel.
\newblock A universal dynamic program and refined existence results for
  decentralized stochastic control.
\newblock {\em SIAM Journal on Control and Optimization}, 58:2711–2739.

\bibitem{yukselSICON2017}
S.~Y{\"u}ksel.
\newblock On stochastic stability of a class of non-{M}arkovian processes and
  applications in quantization.
\newblock {\em SIAM Journal on Control and Optimization}, 55:1241--1260, 2017.

\bibitem{YukselBasarBook}
S.~Y\"uksel and T.~Ba\c{s}ar.
\newblock {\em Stochastic Networked Control Systems: Stabilization and
  Optimization under Information Constraints}.
\newblock Springer, New York, 2013.

\bibitem{YukselSaldiSICON17}
S.~Y\"uksel and N.~Saldi.
\newblock Convex analysis in decentralized stochastic control, strategic
  measures and optimal solutions.
\newblock {\em SIAM Journal on Control and Optimization}, 55:1--28, 2017.

\end{thebibliography}

\end{document}